\documentclass[11pt]{article}

\usepackage{latexsym,amsmath,amsfonts,amssymb,dsfont,wasysym}
\usepackage{amsthm}
\usepackage{enumerate}
\usepackage{epsfig}
\usepackage{verbatim}
\usepackage{xcolor}
\usepackage{enumitem}
\usepackage{mathrsfs}
\usepackage{hyperref,cite}
\usepackage[title,titletoc,toc]{appendix}
\usepackage{tikz}
\usepackage{graphicx,subcaption}

\def\td{\mathrm{d}}

\def\diam{\mathrm{diam}} 
 \def\det{\mathrm{det}}
\def\supp{\mathrm{supp}} \def\dist{\mathrm{dist}}
\def\spa{\mathrm{span}} \def\diag{\mathrm{diag}}
\def\sign{\mathrm{sgn}} \def\tD{\mathrm{D}}
 
\def\ex{\mathds{E}} \def\eps{\varepsilon}
\def\prb{\mathds{P}} \def\qrb{\mathds{Q}}
 \def\wass{\mathds{W}}
\def\NN{\mathbb{N}} \def\ZZ{\mathbb{Z}}
 \def\RR{\mathbb{R}}

\def\wt{\widetilde} \def\wh{\widehat}
\def\lb{\left(} \def\rb{\right)}
\def\lv{\left|} \def\rv{\right|}
\def\lvv{\left\|} \def\rvv{\right\|}
 
\def\lq{\leqslant} \def\gq{\geqslant}

 \def\N{\mathcal{N}}
\newcommand\num{\stepcounter{equation}\tag{\theequation}}

\newtheorem*{theorem*}{Theorem}
\newtheorem{theorem}{Theorem}
\newtheorem{lemma}[theorem]{Lemma}
\newtheorem{corollary}[theorem]{Corollary}

\newtheorem{remark}[theorem]{Remark}

\usepackage[top=3.0cm, bottom=3.0cm, left=3.0cm, right=3.0cm]{geometry}
\parskip 11pt
\parindent 0in

\makeatletter \@addtoreset{equation}{section}
\renewcommand{\thesection}{\arabic{section}}
\renewcommand{\theequation}{\thesection.\arabic{equation}}


\title{\textsc{A Coupling for Triple Stochastic Integrals}}

\author{X\={\i}l\'{\i}ng Zh\={a}ng
        \thanks{School of Mathematics, The University of Edinburgh. e-mail: xiling.zhang@ed.ac.uk}}

\begin{document}
\maketitle
\begin{abstract}
This article presents a coupling approach for the approximation of iterated stochastic integrals of length three. The generation of such integrals is the central problem of higher-order pathwise approximations for SDEs, which still lacks a satisfactory answer due to the restriction of dimensionality and computational load. Here we start from the Fourier representation of the triple stochastic integral and investigate the global behaviour of the joint density of the representation. Finally in the main result we give a coupling in the quadratic Vaserstein distance.
\end{abstract}

\section{Introduction}

Let\footnote{Throughout the paper $\ZZ^+$ denotes the set of positive integers and $\NN$ denotes the set of natural numbers $\ZZ^+\cup\{0\}$.} $d,q\in\ZZ^+$ and $(\Omega,\mathscr{F},\prb)$ be a complete probability space equipped with a right-continuous filtration $\mathbb{F}=\{\mathscr{F}_t\}_{t\gq0}$. Consider an $\RR^d$-valued autonomous stochastic differential equation driven by a $q$-dimensional $\mathbb{F}$-Wiener martingale $W$:
\begin{equation}\label{sde}
x_t=x_0+\int_0^tb(x_s)\td s+\int_0^t\sigma(x_s)\td W_s.
\end{equation}
Assume that the coefficients $b:\RR^d\to\RR^d$ and $\sigma:\RR^d\to\RR^{d\times q}$ are sufficiently smooth. It is well-known that one can derive numerical schemes that converge in the strong $L^p$ sense of order greater than $1/2$ from stochastic Taylor expansions, as is shown in \cite{kloeden1995NumSolStoDifEqu}. For example, by applying It\^o's formula to $b$ and $\sigma$, one obtains the It\^o-Taylor expansion of length $2$: for each component $i=1,\cdots,d$ on the interval $[s,t]$,
\begin{align}
x_t^i=&x_s^i+b_i(x_s)(t-s)+\sum_{j=1}^q\sigma_{ij}(x_s)(W_t^j-W_s^j)\nonumber\\
&+\int_s^t\int_s^r\mathcal{L}b_i(x_u)\td u\td r+\sum_{j=1}^q\int_s^t\int_s^r\sum_{k=1}^d\sigma_{kj}(x_u)\partial_kb_i(x_u)\td W_u^j\td r\nonumber\\
&+\sum_{j=1}^q\int_s^t\int_s^r\mathcal{L}\sigma_{ij}(x_u)\td u\td W_r^j+\sum_{j,k=1}^q\int_s^t\int_s^r\sum_{l=1}^d\sigma_{lk}(x_u)\partial_l\sigma_{ij}(x_u)\td W_u^k\td W_r^j,\label{ito_taylor_2}
\end{align}
where $\partial_k$ is the partial derivative w.r.t. the $k$-th coordinate. The last term in \eqref{ito_taylor_2} involves an iterated stochastic integral, and it gives rise to Milstein's method: for each component $i=1,\cdots,d$,
\begin{equation}\label{milstein}
X_{k+1}^i=X_k^i+b_i(X_k)h+\lb\sum_{j=1}^q\sigma_{ij}(X_k)\Delta W_{k+1}^j+\sum_{j,l=1}^q\varsigma_{ijl}(X_k)A_k(j,l)\rb,
\end{equation}
where $h\in(0,1)$ is the step size, $\Delta W^j_{k+1}=W^j_{t_{k+1}}-W^j_{t_k},~\varsigma_{ijl}(x):=\sum_{m=1}^d\sigma_{mj}(x)\partial_m\sigma_{il}(x)$ and
\begin{equation*}
A_k(j,l):=\int_{t_k}^{t_{k+1}}(W_t^j-W_{t_k}^j)\td W_t^l.
\end{equation*}
The scheme \eqref{milstein} has strong-$L^2$ convergence rate $O(h)$ according to Kloeden and Platen \cite{kloeden1995NumSolStoDifEqu} (Section 10.3), but the problem lies in the generation of the double integral $I_{jl}=\int_0^hW_t^j\td W_t^l$, which is non-trivial for $q\gq2$.

As mentioned by Wiktorsson \cite{wiktorsson2001JoiChaFunSimSimIteItIntMulIndBroMot} and Davie \cite{davie2014Patappstodifequusicou} (Section 2), if the diffusion matrix satisfies the commutativity condition $\varsigma_{ijl}(x)=\varsigma_{ilj}(x)$ for all $x\in\RR^d$ and all $i=1,\cdots,d,~j,l=1,\cdots,q$, one only needs to generate the Wiener increments $\Delta W_{k+1}$ to achieve the order-$1$ convergence. But this is not always the case: using only the Wiener increments $\Delta W_{k+1}$ to implement a numerical method will, in general, result in a convergence rate no more than $O(h^{1/2})$, according to \cite{clark1978MaxRatConDisAppStoDifEqu}.

One attempt to generate the double integral $I_{jl}$ was made by Lyons and Gaines \cite{lyons1994RanGenStoAreInt}, but their method only works for $q=2$. Recently a strong result for any dimension has been proved by Davie \cite{davie2014Patappstodifequusicou} (Theorem 4) under the condition that the diffusion matrix $\sigma$ has rank $d$ everywhere, and it provides a way to approximate the SDE up to an arbitrary order. This is a significant improvement concerning higher-order approximations. The idea is that, rather than generating the double integrals at each step $k$, one approximates the quantity inside the big parentheses in \eqref{milstein} as a whole. This is a completely different approach than the usual ones, as Davie's arguments are based on the coupling method, quantifying the strong-$L^p$ convergence in terms of the Vaserstein\footnote{Also spelt as ``Wasserstein".} metrics.

\paragraph{The coupling method.} For probability measures $\prb,\qrb$ on $\RR^q$ and $p\gq1$, the \textit{Vaserstein $p$-distance} is defined by
\begin{equation*}
\wass_p(\prb,\qrb):=\inf_{\pi\in\Pi(\prb,\qrb)}\lb\int_{\RR^q\times\RR^q}|x-y|^p\pi(\td x,\td y)\rb^{1/p},
\end{equation*}
where $\Pi(\prb,\qrb)$ is the set of all joint probability measures on $\RR^q\times\RR^q$ with marginal laws $\prb$ and $\qrb$. In general $\prb$ and $\qrb$ need not be defined on the same probability space, but this definition is enough for the purpose of this article. The notation $\wass_p(X,Y)$ will not cause any confusion for random variables $X$ and $Y$ having laws $\prb$ and $\qrb$, respectively. If one can show a bound for the distance between the two laws, we then say there is a \textit{coupling} between $X$ and $Y$ (or $\prb$ and $\qrb$).

The significance of using the Vaserstein distances instead of other ones is that, when generating numerical schemes for an SDE, the convergence in the Vaserstein-type distance $\wass_{p,\infty}$ (replacing $|x-y|^p$ in the definition above by $\max_k|x_k-y_k|^p$) is equivalent to the usual strong $L^p$-convergence, for the purpose of simulation at least. To see this, suppose we have found a coupling between the grid points of the solution $x=\{x_{t_k}\}_k$ and a numerical scheme $X=\{X_k\}_k$ with $\wass_{p,\infty}(x,X)\lq Ch^\gamma$ for some $\gamma>0$. Then by definition, $\forall\eps>0$ there is a random vector $Y^\eps$ on the same probability space as the solution $x$, having the same distribution as $X$, s.t. $(\ex\max_k|x_{t_k}-Y_k|^p)^{1/p}\lq\wass_{p,\infty}(x,X)+\eps$. Choose $\eps=h^\gamma$ and in practice one generates $Y$ instead of $X$ to approximate $x$. The reader is referred to Section 12 in \cite{davie2014Patappstodifequusicou} for a detailed discussion on the contexts where such a substitution holds or fails.

Although there is no general formulas for the quantity $\wass_p(\prb,\qrb)$, if $\prb$ and $\qrb$ have densities $f$ and $g$, respectively, then there is the elementary and yet important inequality
\begin{equation}\label{crude_bound_1}
\wass_p(\prb,\mathds{Q})\lq C_p\lb\int_{\RR^q}|x|^p|f(x)-g(x)|\td x\rb^{1/p},
\end{equation}
for all $p\gq1$, as a variant of Proposition 7.10 in \cite{villani2003TopOptTra}. This inequality serves as a main tool to give an $\wass_2$-estimate in \cite{davie2014Patappstodifequusicou} and \cite{davie2015Polpernordis}, and will be used for all the main result in this article.

The more difficult situation is that $\sigma$ has rank less than $d$, which could well happen. In Section 9 in \cite{davie2014Patappstodifequusicou} a different approach based on the Fourier expansion introduced in Section 5.8 in \cite{kloeden1995NumSolStoDifEqu} is proposed, giving a coupling for the double integral $I_{jl}$. The motivation of this article is to provide a feasible approximation for SDEs of a higher order. For the equation \eqref{sde} on the interval $[0,T]$, by applying It\^o's formula again to the term $\sigma_{kl}(X_u)\partial_k\sigma_{ij}(X_u)$ in \eqref{ito_taylor_2}, one obtains, for each component $i=1,\cdots,d$ on the interval $[s,t]$,
\begin{align*}
X_t^i=&X_s^i+b_i(X_s)(t-s)+\sigma_{ij}(X_s)(W_t^j-W_s^j)+\sigma_{kl}(X_s)\partial_k\sigma_{ij}(X_s)\int_s^t\int_s^r\td W_u^l\td W_r^j\\
&+\int_s^t\int_s^r\mathcal{L}b_i(X_u)\td u\td r+\int_s^t\int_s^r\sigma_{kl}(X_u)\partial_kb_i(X_u)\td W_u^l\td r\\
&+\int_s^t\int_s^r\mathcal{L}\sigma_{ij}(X_u)\td u\td W_r^j+\int_s^t\int_s^r\int_s^u\mathcal{L}\lb\sigma_{kl}(X_v)\partial_k\sigma_{ij}(X_v)\rb\td v\td W_u^l\td W_r^j\\
&+\int_s^t\int_s^r\int_s^u\partial_m\lb\sigma_{kl}(X_v)\partial_k\sigma_{ij}(X_v)\rb\sigma_{mn}(X_v)\td W_v^n\td W_u^l\td W_r^j,
\end{align*}
where the summation signs over repeated indices are omitted. From this expression one can obtain a suitable numerical scheme (formula (10.4.6) in \cite{kloeden1995NumSolStoDifEqu}) with strong convergence order $O(h^{3/2})$. Just as the Milstein scheme, the crucial ingredient to achieve such a higher-order convergence is the generation of the triple integrals
\begin{equation*}
I_{jkl}(s,t):=\int_s^t\int_s^r\int_s^u\td W_v^j\td W_u^k\td W_r^l,
\end{equation*}
for indices $(j,k,l)\in\{1,\cdots,q\}^3$.

Similar to the way the double stochastic integral is treated in \cite{davie2014Patappstodifequusicou}, one would expect the same method to be extended to treat triple integrals. For the simplicity of formulation, the Stratonovich triple integral $I^\circ_{jkl}(s,t):=\int_s^t\int_s^r\int_s^u\td W_v^j\circ\td W_u^k\circ\td W_r^l$ will be considered instead of the It\^o version, since the Fourier representation of the former has a relatively simpler form. This is due to the fact that the product of two Stratonovich integrals is a shuffle product - see Proposition 2.2 in \cite{gaines1994AlgIteStoInt}. In other words, an iterated Stratonovich integral of longer length can be represented by shorter ones in a much simpler way compared its It\^o counterpart.

\paragraph{The double integral case.} The goal of this paper is to find a random variable $\bar{I}_{jkl}$ whose law is close to that of $I^\circ_{jkl}$ in the Vaserstein distance, which in turn gives a feasible $O(h^{3/2})$-approximation for the SDE \eqref{sde}. In order to have a better understanding of the method let us briefly review Davie's Fourier method (Section 9 in \cite{davie2014Patappstodifequusicou}). Consider the interval $[0,1]$ for simplicity. According to \cite{kloeden1995NumSolStoDifEqu} (Section 5.8), the Brownian bridge process $W_t-tW_1$ has Fourier expansion
\begin{equation}\label{bridge}
W_t^j-tW_1^j=\frac{1}{2\sqrt{2}\pi}x_{j0}+\frac{1}{\sqrt{2}\pi}\sum_{r=1}^\infty x_{jr}\cos(2\pi rt)+\frac{1}{\sqrt{2}\pi}\sum_{r=1}^\infty y_{jr}\sin(2\pi rt),
\end{equation}
where $x_{jr},~y_{jr}$ are $\N(0,1)$-random variables mutually independent for different values of $j=1,\cdots,q$ or $r\in\NN$, all independent of $W_1$. Then the double integral $I^\circ_{jk}=\int_0^1W_s^j\circ\td W_s^k$ has Fourier representation
\begin{equation}\label{double_int}
I^\circ_{jk}=\frac{1}{2}W_1^jW_1^k+\frac{1}{\sqrt{2}\pi}\lb W_1^jz_k-W_1^kz_k\rb+\frac{1}{2\pi}\lambda_{jk},
\end{equation}
where $\lambda_{jk}=\sum_{r\gq1}r^{-1}(x_{jr}y_{kr}-y_{jr}x_{kr})$ and $z_j=\sum_{r\gq1}r^{-1}x_{jr}$. One then needs to approximate each $\lambda_{jk}$ and $z_j$ by their partial sums $\lambda_{jk}=\sum_{r=1}^pr^{-1}(x_{jr}y_{kr}-y_{jr}k_{jr})$ and $z_j=\sum_{r=1}^pr^{-1}x_{jr}$. Denote $\wt\lambda_{jk}^{(p)}=\lambda_{jk}-\lambda_{jk}^{(p)},~\wt z_j^{(p)}=z_j-z_j^{(p)}$ and $U:=(\lambda,z),~U_p:=(\lambda^{(p)},z^{(p)}),~\wt U_p:=(\wt\lambda^{(p)},\wt z^{(p)})$.

Davie's result states that if there is a random variable $\bar{U}_p$, independent of $U_p$, having the same moments as $\wt U_p$ up to order $m-1$ and satisfying $\ex\exp(a\sqrt{p}|\bar{U}_p|)\lq b$ for some positive constants $a,b$ for all $p$, then $\wass_2(U,~U_p+\bar{U}_p)=O(p^{-m/2})$ for $p$ sufficiently large. The idea is to estimate the densities $g(\zeta)$ of $U$ and $h(\zeta)$ of $U_p+\bar{U}_p$. If $f_p$ is the density of $U_p$, then $g(\zeta)=\ex f_p(\zeta-\wt U_p)$ and $h(\zeta)=\ex f_p(\zeta-\bar{U}_p)$. By Taylor's theorem, for all $\zeta,w\in\RR^d$,
\begin{align}
f_p(\zeta-w)=&\sum_{|\beta|=0}^{m-1}\frac{(-1)^{|\beta|}}{\beta!}w^\beta\partial^\beta f_p(\zeta)\nonumber\\
&+\sum_{|\beta|=m}\frac{|\beta|(-1)^{|\beta|}}{\beta!}\int_0^1(1-\theta)^{|\beta|-1}w^\beta\partial^\beta f_p(\zeta-\theta w)\td\theta.\label{taylor_density}
\end{align}
Since up to the $(m-1)$-th moments of $\wt U_p$ and $\bar{U}$ match, when taking the difference $g(\zeta)-h(\zeta)$ the first summation vanishes, and hence $\forall\zeta\in\RR^d$,
\begin{equation}\label{different_g_h}
g(\zeta)-h(\zeta)=\sum_{|\beta|=m}\int_0^1C_{\beta,\theta}\ex\lb\wt U_p^\beta\partial^\beta f_p(\zeta-\theta\wt U_p)-\bar{U}^\beta\partial^\beta f_p(\zeta-\theta\bar{U}_p)\rb\td\theta,
\end{equation}
where $C_{\beta,\theta}=|\beta|(-1)^{|\beta|}(1-\theta)^{|\beta|-1}/\beta!$. If one can give a uniform bound for some higher derivatives of $f_p$ in terms of $p$, then using an interpolation argument one can show a reasonable decay for the $m$-th derivative of $f_p$, and finally one finds a coupling between $U$ and $U_p+\bar U_p$ by the inequality \eqref{crude_bound_1}.

The main advantage of the double integral $I^\circ_{jk}$ compared to the triple one is the fact that its Fourier representation only involves $\lambda$ and $z$, whose summands are independent. This ensures that $U$ has a smooth density (as the convolution of the density $f_p$ of $U_p$ and the law of $\wt U_p$), which significantly simplifies the analysis. More importantly, the characteristic function of $U_p$ can be explicitly calculated - see formula (32) in the proof of Lemma 11 in \cite{davie2014Patappstodifequusicou}. This provides some convenience for investigating the global and local behaviour of the density $f_p$ (Lemma 12, 13 and 14). In particular, Lemma 14 therein gives a lower bound for $f_p$, which is essential for achieving a coupling for $U$ of the optimal order $O(p^{-m/2})$ in the $\wass_2$ distance.

Without Lemma 14, one can still achieve a suboptimal $\wass_2$-rate $O(p^{-m/4})$ by directly showing a decay of the difference $|g(\zeta)-h(\zeta)|$ - this is the goal of the present paper, but the treatment of the densities is quite different from the double integral case.

The latter is much more straighforward to see. For $p$ sufficiently large, the vector $\tD^{2m}f_p$ of partial derivatives of order $2m$ is uniformly bounded everywhere due to part (1) of Lemma 11 in \cite{davie2014Patappstodifequusicou}. Also by Lemma 12 therein, one has $f_p(\zeta)\lq e^{-c_q|\zeta|}$ for $|\zeta|$ sufficiently large. Then one can apply Lemma 9 therein to get a rapid decay for $\tD^mf_p(\zeta)$. To see this, consider $|\zeta|>p$ sufficiently large and the ball $B(\zeta,1)$ that is disjoint with $B(0,p)$. Then $\sup_{y\in B(\zeta,1)}f_p(y)\lq e^{-c_q(|\zeta|-1)}$, and by applying Lemma 9 to the ball $B(\zeta,1)$ one sees the following bound for (the Euclidean norm of) the $m$-th derivatives:
\begin{equation}\label{interpolation}
|\tD^mf_p(\zeta)|\lq C_{q,m}\max\left\{\sup_{y\in B(\zeta,1)}\sqrt{f_p(y)}\sup_{y\in B(\zeta,1)}\sqrt{|\tD^{2m}f_p(y)|},~\sup_{y\in B(\zeta,1)}f_p(y)\right\}.
\end{equation}
This yields $|\tD^mf_p(\zeta)|\lq C_{q,m}e^{-c_q|\zeta|}$. Therefore from \eqref{different_g_h} and part (2) of Lemma 11 in \cite{davie2014Patappstodifequusicou} one has, by the Cauchy-Schwartz inequality, that for all $\zeta\in\RR^{q(q+1)/2}$,
\begin{align*}
|g(\zeta)-h(\zeta)|\lq&C_{d,m}\sum_{|\beta|=m}\lb\ex|\wt U_p^\beta\partial^\beta f_p(\zeta-\wt U_p)|+\ex|\bar{U}^\beta\partial^\beta f_p(\zeta-\bar{U}_p)|\rb\\
\lq&C_{d,m}p^{-m/2}\lb\sqrt{\ex|\tD^mf_p(\zeta-\wt U_p)|^2}+\sqrt{\ex|\tD^mf_p(\zeta-\bar{U}_p)|^2}\rb.
\end{align*}
Notice that, on the set $\{\omega:~|\wt U_p|\lq1\}$ one has $\|\tD^mf_p(\zeta-\wt U_p)\|^2\lq C_qe^{-c_q|\zeta|}$ by the rapid decay of $\tD^mf_p$; on the complement $\{\omega:~|\wt U_p|>1\}$, part (2) of Lemma 11 and Chebyshev's inequality imply that $\prb(|\wt U_p|>1)\lq C_Mp^{-M}$ for any $M>0$. The same argument works for the second term above involving $\bar{U}$, and so by the inequality \eqref{crude_bound_1} for the quadratic distance,
\begin{equation*}
\wass_2(U,~\wt U_p+\bar{U}_p)\lq C\lb\int_{\RR^{q(q+1)/2}}|\zeta|^2|g(\zeta)-h(\zeta)|\td\zeta\rb^{1/2}\lq C_{q,m}p^{-m/4}.
\end{equation*}

From this calculation one sees that the key step towards a good coupling result depends on how well the behaviour of $f_p$ is understood. Davie's result is a significant improvement to the existing rate of approximation - see the discussion following the proof of Theorem 15 therein. This is due to some careful estimates (Lemma 12, 13 and 14 in \cite{davie2014Patappstodifequusicou}) for the density $f_p$. For the triple integral $I^\circ_{jkl}$, however, showing similar estimates becomes much more complicated as the Fourier coefficients for $I^\circ_{jkl}$ have summands that are not independent of each other - see the definition of the random variable $\Delta_{jkl}$ below.

\paragraph{Notation.} Throughout this paper we will denote by $\phi$ the standard normal density of dimension $1$, by $B(x,r)$ the open ball of radius $r$ centred at $x$, and by $\Lambda^d$ the Lebesgue measure on $\RR^d$. The notation $C_0^\infty$ stands for the set of $C^\infty$-functions with compact support. Unless specified otherwise, the single bars $|\cdot|$ stand for the Euclidean norm, modulus of a complex number, or the cardinality of a set, and the double bars $\|\cdot\|$ stand for the operator norm, which in the context of matrices is equivalent to any other matrix norm. The letter $C$ will be used for a generic constant that may change value from line to line, with subscripts specifying its dependence on the parameters. The symbol $\lesssim_\alpha$ ($\gtrsim_\alpha$) means that the inequality $\lq$ ($\gq$) holds up to a multiplicative constant $C_\alpha$, and $\simeq_\alpha$ is used when both inequalities hold. For a function $f(x,y)$ of two variables, we also write $f(x;y)$ when, especially differentiating, $y$ is treated as a fixed parameter. For example, $\tD f(x;y)=\partial_xf(x,y)$.

\paragraph{Acknowledgement.} This work was completed under the patient guidance of my Ph.D. advisor, Prof. Alexander M. Davie, who suggested the problem and gave many crucial advices on the main steps of the arguments as well as technical details.

\section{The Fourier Representation}

For the simplicity of presentation let us consider the triple integral on the unit interval $[0,1]$. Following Section 5.8 in \cite{kloeden1995NumSolStoDifEqu}, from the Fourier expansion \eqref{bridge} the triple Stratonovich integral
\begin{equation*}
I^\circ_{jkl}=\int_0^1\int_0^tW_s^j\circ\td W^k_s\circ\td W^l_t,
\end{equation*}
for each $(j,k,l)\in\{1,\cdots,q\}^3$ has the following representation:
\begin{align*}
I^\circ_{jkl}=&\frac{1}{6}W_1^jW_1^kW_1^l-\frac{1}{2\sqrt{2}\pi}W_1^jW_1^k\lb z_l-\frac{1}{\pi}u_l\rb-\frac{1}{2\sqrt{2}\pi}W_1^kW_1^l\lb z_j-\frac{1}{\pi}u_j\rb\\
&-\frac{1}{\sqrt{2}\pi^2}W_1^jW_1^lu_k-\frac{1}{2\pi^2}z_j\lb W_1^kz_l-W_1^lz_k\rb+\frac{1}{2\pi}W_1^l\lb\frac{1}{2}\lambda_{jk}+\frac{1}{\pi}\nu_{kj}\rb\\
&+\frac{1}{2\pi}W_1^j\lb\frac{1}{2}\lambda_{kl}-\frac{1}{\pi}\nu_{kl}\rb+\frac{1}{4\pi^2}\lb W_1^j\mu_{kl}-W_1^k\mu_{jl}\rb-\frac{1}{2\sqrt{2}\pi^2}z_j\lambda_{kl}\\
&+\frac{1}{4\sqrt{2}\pi}\Delta_{jkl},
\end{align*}
where the coefficients $z,u,\lambda,\mu,\nu$ are defined as
\begin{align*}
z_j=&\sum_{r=1}^\infty\frac{1}{r}x_{jr},~u_j=\sum_{r=1}^\infty\frac{1}{r^2}y_{jr},\\
\lambda_{jk}=&\sum_{r=1}^\infty\frac{1}{r}\lb x_{jr}y_{kr}-y_{jr}x_{kr}\rb,~\mu_{jk}=\sum_{r=1}^\infty\frac{1}{r^2}\lb x_{jr}x_{kr}+y_{jr}y_{kr}\rb,\\
\nu_{jk}=&\sum_{\substack{r,s=1\\r\neq s}}^\infty\frac{1}{r^2-s^2}\lb\frac{r}{s}x_{jr}x_{ks}-y_{jr}y_{ks}\rb,
\end{align*}
with $x_{jr},y_{jr}$, again, being $\N(0,1)$-random variables independent for different indices $j=1,\cdots,q,~r\in\ZZ^+$ and all independent of $W_1^j$, and the last coefficient $\Delta$ is given by
\begin{align*}
\Delta_{jkl}=\sum_{r,s=1}^\infty\left\{-\frac{1}{r(r+s)}\left[(x_{jr}y_{ks}+y_{jr}x_{ks})x_{l,r+s}+(-x_{jr}x_{ks}+y_{jr}y_{ks})y_{l,r+s}\right]\right.&\\
+\frac{1}{rs}\left[(x_{jr}y_{ls}+y_{jr}x_{ls})x_{k,r+s}+(-x_{jr}x_{ls}+y_{jr}y_{ls})y_{k,r+s}\right]&\\
\left.+\frac{1}{s(r+s)}\left[(-x_{kr}y_{ls}+y_{kr}x_{ls})x_{j,r+s}+(x_{kr}x_{ls}+y_{kr}y_{ls})y_{j,r+s}\right]\right\}&
\end{align*}
For a positive integer $p$, write $z^{(p)}$ as the $p$-th partial sum of $z$ and $\wt z^{(p)}=z-z^{(p)}$. Similar notations are applied to $u,\lambda$ and $\mu$. Let $\nu^{(p)}$ be the partial sum of $\nu$ over $r,s\lq p,~r\neq s$ and $\wt\nu^{(p)}=\nu-\nu^{(p)}$, whilst $\Delta^{(p)}$ denotes the partial sum of $\Delta$ up to $r+s\lq p$ and $\wt\Delta^{(p)}=\Delta-\Delta^{(p)}$.

From the definition of $\nu_{jk}^{(p)}$ one observes that, by splitting each variable $\mu_{jk}^{(p)}$ into two parts:
\begin{equation*}
\mu_{jk}^{(1,p)}:=\sum_{r=1}^p\frac{1}{r^2}x_{jr}x_{kr},~\mu_{jk}^{(2,p)}:=\sum_{r=1}^p\frac{1}{r^2}y_{jr}y_{kr},
\end{equation*}
one need only generate $\nu_{jk}^{(p)}$ for $j<k$, since
\begin{equation*}
\nu_{jk}^{(p)}+\nu_{kj}^{(p)}=z_j^{(p)}z_k^{(p)}-\mu_{jk}^{(1,p)}.
\end{equation*}
Equivalent notations for the infinite sums are used by omitting the superscript $(p)$ and the identity still holds. Therefore one need only consider $\nu_{jk}$ for $j<k$.

Another observation is that one need not consider all possible choices of the $3$-tuple $(j,k,l)\in\{1,\cdots,q\}^3$ for $\Delta$; it suffices to focus on those terms with $(j,k,l)$ being a \textbf{Lyndon word} - a word that is strictly less than all of its proper right factors in the lexicographic order. This is due to the fact that all triple Stratonovich integrals $I^\circ_{jkl}$ can be expressed by the Lyndon words of length at most $3$ - see Corollary 3.3 in \cite{gaines1994AlgIteStoInt}.

For a word $w$ in a totally ordered set $A$, if it is the concatenation of two non-empty words $u,v\in A$, i.e. $w=uv$, then $v$ is called a proper right factor of $w$. For example, $(1,1,2)$ and $(1,3,2)$ are both Lyndon words but $(1,2,1)$ is not. By definition, a triple $(j,k,l)$ is a Lyndon word if and only if $j<k\wedge l$ or $j=k<l$. Denote by $\mathfrak{L}_{3,q}\subset\{1,\cdots,q\}^3$ the set of Lyndon words of length $3$, then according to \cite{gaines1994AlgIteStoInt} $|\mathfrak{L}_{3,q}|=(q^3-q)/3$.

As an analogue of the work by Davie \cite{davie2014Patappstodifequusicou} (Section 9), one seeks to approximate the variable $V=(z,u,\lambda,\mu,\nu,\Delta)$ by studying the distribution of the partial sums
\begin{equation*}
V_p=(z^{(p)},u^{(p)},\lambda^{(p)},\mu^{(p)},\nu^{(p)},\Delta^{(p)}),
\end{equation*}
and that of the remainder $\wt{V}_p:=(\wt z^{(p)},\wt u^{(p)},\wt\lambda^{(p)},\wt\mu^{(p)},\wt\nu^{(p)},\wt\Delta^{(p)})$. Note that for an $O(h^{3/2})$-approximation of the SDE \eqref{sde}, one also needs to simulate the double integrals \eqref{double_int} along with the triple ones. But they are determined by the variables $(z,\lambda)$, which are already included in $V$.

To develop an analogue of Davie's results in \cite{davie2014Patappstodifequusicou}, it is necessary to give some suitable moment estimates for the remainder $\wt V_p$. For simplicity denote the dimension of $V$ by
\begin{equation*}
d=2q^2+2q+(q^3-q)/3,
\end{equation*}
and denote by $v_p$ the $\RR^{2qp}$-vector consisting of $x_{jr},y_{ks}$ for $j,k=1,\cdots,q$ and $r,s=1,\cdots,p$.

For vectors $\omega:=(\alpha,\beta^{(1)},\beta^{(2)},\gamma,a,b,\rho)\in\RR^d$ and $v=(x_{jr},y_{jr})_{j,r}\in\RR^{2qp}$, define the cubic \textbf{phase function} $\Phi_p:\RR^{2qp}\times\RR^d\to\RR$ by
\begin{align}
\Phi_p(v;\omega)=&\sum_{j<k}\lb\alpha_{jk}\lambda_{jk}^{(p)}+\gamma_{jk}\nu_{jk}^{(p)}\rb+\sum_{j\lq k}\lb\beta_{jk}^{(1)}\mu_{jk}^{(1,p)}+\beta_{jk}^{(2)}\mu_{jk}^{(2,p)}\rb\nonumber\\
&+\sum_{j=1}^q\lb a_jz_j^{(p)}+b_ju_j^{(p)}\rb+ \sum_{(j,k,l)\in\mathfrak{L}_{3,q}}^q\rho_{jkl}\Delta_{jkl}^{(p)}.\label{phase}
\end{align}
Then by definition the characteristic function $\psi_p(\xi)$ of $V_p$ is given by
\begin{align*}
\psi_p(\xi)=&\int_{\RR^{2qp}}\exp\{i|\xi|\Phi_p(x,y;\omega_0)\}\prod_{j=1}^q\prod_{r=1}^p\phi(x_{jr})\phi(y_{jr})\td x \td y\\
=:&\int_{\RR^{2qp}}\exp\{i|\xi|\Phi_p(v;\omega_0)\}\phi_p(v)\td v,
\end{align*}
where $\phi$ is the density function of $\N(0,1)$ and $\omega_0=\xi/|\xi|\in\mathbb{S}^{d-1}$. Observe that the matrices $\lambda$ and $\mu$ are skew-symmetric and symmetric, respectively, so it would be convenient to extend the values of the coefficients $\alpha,~\beta:=(\beta^{(1)},~\beta^{(2)})$ to their lower-triangles by setting $\alpha_{kj}=-\alpha_{jk},~\beta_{kj}^{(i)}=\beta_{jk}^{(i)}$ for all $i=1,2,~j,k=1,\cdots,q$. Set $\gamma_{jk}=0$ for all $j\gq k$ and $\rho_{jkl}=0$ if $(j,k,l)$ is not a Lyndon word.

Throughout this article we will be frequently dealing with oscillatory integrals of the form $\psi_p(\xi)$, and we will conveniently call the function $\phi_p$ the \textbf{amplitude}. In order to give a good estimate for magnitude of $\psi_p(\xi)$ one resorts to the method of stationary phase, and for that one needs to study the derivatives of the phase function $\Phi_p$.

To find the gradient $\nabla\Phi_p(v;\omega)$, one can make use the extended definitions of $\alpha,\beta,\gamma$ and write down the partial derivatives. For each $j=1,\cdots,q$ and $r=1,\cdots,p$, differentiating w.r.t. $x_{jr}$ and $y_{jr}$ gives
\begin{align*}
\partial_{x_{jr}}&\Phi_p(v;\omega)=\frac{1}{r}\alpha_{jk}y_{kr}+\frac{1}{r^2}(1+\delta_{kj})\beta_{jk}^{(1)}x_{kr}+\sum_{\substack{s=1\\s\neq r}}^p\frac{1}{r^2-s^2}\lb\frac{r}{s}\gamma_{jk}-\frac{s}{r}\gamma_{kj}\rb x_{ks}+\frac{1}{r}a_j\\
&+\sum_{s=1}^{p-r}\left[\lb\frac{-\rho_{jkl}+\rho_{lkj}}{r(r+s)}-\frac{\rho_{kjl}}{s(r+s)}\rb y_{ks}x_{l,r+s}+\lb\frac{\rho_{jkl}+\rho_{lkj}}{r(r+s)}+\frac{\rho_{kjl}}{s(r+s)}\rb x_{ks}y_{l,r+s}\right.\\
&\qquad\quad\left.+\lb\frac{\rho_{jkl}+\rho_{lkj}}{rs}-\frac{\rho_{kjl}}{s(r+s)}\rb (y_{ls}x_{k,r+s}-x_{ls}y_{k,r+s})\right]\\
&+\sum_{s=1}^{r-1}\left[\lb-\frac{\rho_{jkl}}{rs}+\frac{\rho_{kjl}}{(r-s)s}\rb x_{k,r-s}y_{ls}+\lb\frac{\rho_{jkl}}{rs}+\frac{\rho_{kjl}}{(r-s)s}\rb y_{k,r-s}x_{ls}\right.\\
&\qquad\quad\left.-\frac{\rho_{lkj}}{(r-s)r}\lb x_{l,r-s}y_{ks}+y_{l,r-s}x_{ks}\rb\right],\num\label{1st_der_x}\\
\partial_{y_{jr}}&\Phi_p(v;\omega)=-\frac{1}{r}\alpha_{jk}x_{kr}+\frac{1}{r^2}(1+\delta_{kj})\beta_{jk}^{(2)}y_{kr}-\sum_{\substack{s=1\\s\neq r}}^p\frac{1}{r^2-s^2}(\gamma_{jk}-\gamma_{kj})y_{ks}+\frac{1}{r^2}b_j\\
&+\sum_{s=1}^{p-r}\left[\lb-\frac{\rho_{jkl}+\rho_{lkj}}{r(r+s)}-\frac{\rho_{kjl}}{s(r+s)}\rb x_{ks}x_{l,r+s}+\lb\frac{-\rho_{jkl}+\rho_{lkj}}{r(r+s)}-\frac{\rho_{kjl}}{s(r+s)}\rb y_{ks}y_{l,r+s}\right.\\
&\qquad\quad\left.+\lb\frac{\rho_{jkl}+\rho_{lkj}}{rs}+\frac{\rho_{kjl}}{s(r+s)}\rb \lb x_{ls}x_{k,r+s}+y_{ls}y_{k,r+s}\rb\right]\\
&+\sum_{s=1}^{r-1}\left[\lb\frac{\rho_{jkl}}{(r-s)r}-\frac{\rho_{kjl}}{(r-s)s}\rb x_{k,r-s}x_{ls}+\lb\frac{\rho_{jkl}}{rs}+\frac{\rho_{kjl}}{(r-s)s}\rb y_{k,r-s}y_{ls}\right.\\
&\qquad\quad\left.+\frac{\rho_{lkj}}{(r-s)r}\lb x_{l,r-s}x_{ks}-y_{l,r-s}y_{ks}\rb\right],\num\label{1st_der_y}
\end{align*}
where $\delta_{jk}$ is the Kr\"onecker delta, the summation signs over the repeated indices $k,l=1,\cdots,q$ are omitted, and all $x$ and $y$-terms with second subscripts outwith the interval $[1,p]$ are assumed to vanish. The Hessian matrix of $\Phi_p$ takes the form
\begin{equation}\label{hessian}
\tD^2\Phi_p(v;\omega)=\begin{pmatrix}
H_{xx}(1,1) & \cdots & H_{xx}(1,q) & H_{xy}(1,1) & \cdots & H_{xy}(1,q)\\
\vdots & \ddots & \vdots & \vdots & \ddots & \vdots\\
H_{xx}(q,1) & \cdots & H_{xx}(q,q) & H_{xy}(q,1) & \cdots & H_{xy}(q,q)\\
H_{yx}(1,1) & \cdots & H_{yx}(1,q) & H_{yy}(1,1) & \cdots & H_{yy}(1,q)\\
\vdots & \ddots & \vdots & \vdots & \ddots & \vdots\\
H_{yx}(q,1) & \cdots & H_{yx}(q,q) & H_{yy}(q,1) & \cdots & H_{yy}(q,q)
\end{pmatrix},
\end{equation}
where for each pair $(j,k)\in\{1,\cdots,q\}^2$ the blocks $H_{xx}(j,k),~H_{xy}(j,k),~H_{yy}(j,k)$ are $p\times p$ matrices, e.g.,
\begin{equation}\label{block}
H_{xx}(j,k)=\begin{pmatrix}
\partial^2_{x_{j1}x_{k1}} & \partial^2_{x_{j1}x_{k2}} & \cdots & \partial^2_{x_{j1}x_{kp}}\\
\partial^2_{x_{j2}x_{k1}} & \partial^2_{x_{j2}x_{k2}} & \cdots & \partial^2_{x_{j2}x_{kp}}\\
\vdots & \vdots & \ddots & \vdots\\
\partial^2_{x_{jp}x_{k1}} & \partial^2_{x_{jp}x_{k2}} & \cdots & \partial^2_{x_{jp}x_{kp}}
\end{pmatrix}\Phi_p(v,\omega),
\end{equation}
and the rest are similarly defined. From the gradient of $\Phi_p$ in $v$ one can compute the second derivative $\tD^2\Phi_p$ by finding the mixed derivatives for each pair $(j,k)$ and $(r,s)$. The $(r,s)$-th entries of the blocks $H_{xx}(j,k),~H_{yy}(j,k)$ and $H_{xy}(j,k)$ are given by
\begin{align*}
\partial^2_{x_{jr}x_{ks}}\Phi_p(v;\omega)=&\frac{1}{r^2}(1+\delta_{jk})\beta_{jk}^{(1)}\delta_{rs}+\frac{1}{r^2-s^2}\lb\frac{r}{s}\gamma_{jk}-\frac{s}{r}\gamma_{kj}\rb(1-\delta_{rs})\\
&+\lb\frac{\rho_{jkl}+\rho_{lkj}}{r(r+s)}+\frac{\rho_{kjl}+\rho_{ljk}}{s(r+s)}-\frac{\rho_{jlk}+\rho_{klj}}{rs}\rb y_{l,r+s}\\
&+\lb\frac{-\rho_{jlk}+\rho_{klj}}{rs}-\frac{\rho_{ljk}+\rho_{kjl}}{(s-r)s}+\frac{\rho_{jkl}+\rho_{lkj}}{r(s-r)}\rb y_{l,s-r}\\
&+\lb\frac{-\rho_{klj}+\rho_{jlk}}{rs}+\frac{\rho_{kjl}+\rho_{ljk}}{(r-s)s}-\frac{\rho_{jkl}+\rho_{lkj}}{(r-s)r}\rb y_{l,r-s},\num\label{2nd_der_xx}\\
\partial^2_{y_{jr}y_{ks}}\Phi_p(v;\omega)=&\frac{1}{r^2}(1+\delta_{jk})\beta_{jk}^{(2)}\delta_{rs}-\frac{1}{r^2-s^2}(\gamma_{jk}-\gamma_{kj})(1-\delta_{rs})\\
&+\lb\frac{-\rho_{jkl}+\rho_{lkj}}{r(r+s)}+\frac{-\rho_{kjl}+\rho_{ljk}}{s(r+s)}+\frac{\rho_{jlk}+\rho_{klj}}{rs}\rb y_{l,r+s}\\
&+\lb\frac{-\rho_{jlk}+\rho_{klj}}{rs}+\frac{-\rho_{ljk}+\rho_{kjl}}{(s-r)s}+\frac{\rho_{jkl}+\rho_{lkj}}{r(s-r)}\rb y_{l,s-r}\\
&+\lb\frac{\rho_{jlk}-\rho_{klj}}{rs}+\frac{\rho_{ljk}+\rho_{kjl}}{(r-s)s}+\frac{\rho_{jkl}-\rho_{lkj}}{(r-s)r}\rb y_{l,r-s},\num\label{2nd_der_yy}\\
\partial^2_{x_{jr}y_{ks}}\Phi_p(v;\omega)=&\frac{1}{r}\alpha_{jk}\delta_{rs}+\lb\frac{-\rho_{jkl}+\rho_{lkj}}{r(r+s)}-\frac{\rho_{kjl}+\rho_{ljk}}{s(r+s)}+\frac{\rho_{jlk}+\rho_{klj}}{rs}\rb x_{l,r+s}\\
&+\lb\frac{\rho_{jlk}+\rho_{klj}}{rs}+\frac{\rho_{ljk}+\rho_{kjl}}{(s-r)s}-\frac{\rho_{jkl}+\rho_{lkj}}{r(s-r)}\rb x_{l,s-r}\\
&+\lb-\frac{\rho_{jlk}+\rho_{klj}}{rs}+\frac{\rho_{ljk}+\rho_{kjl}}{(r-s)s}+\frac{-\rho_{lkj}+\rho_{jkl}}{(r-s)r}\rb x_{l,r-s},\num\label{2nd_der_xy}
\end{align*}
where, again, the summation sign over the repeated index $l=1,\cdots,q$ is omitted, and all $x$ and $y$-terms with second subscripts outwith the interval $[1,p]$ are assumed to vanish.

\section{The Joint Characteristic Function of the Partial Sums}

With the gradient and the Hessian matrix of the phase function $\Phi_p(v;\omega)$ in $v$ given above, one can apply the method of stationary phase to study the asymptotic behaviour of the oscillatory integral $\psi_p(\xi)$. A useful tool for this is provided in \cite{sogge1993FouIntClaAna} (Lemma 0.4.7), and the first estimate given in the following lemma is a more quantitative version of it.

Before stating the lemma let us introduce the norm
\begin{equation*}
|\varphi|_{K,\Omega}:=\max_{0\lq n\lq K}\sup_{x\in\Omega}|\tD^n\varphi(x)|
\end{equation*}
for any smooth function $\varphi$ on a bounded domain $\Omega\subset\RR^d$ and any natural number $K$.

\begin{lemma}\label{stationary_phase}
Let $\Psi,\varphi\in C^\infty(\RR^k)$ with $\supp\varphi=\Omega$ bounded. Then for all $\delta,R>0$ and $K\in\NN$,
\begin{equation*}
\lv\int_\Omega e^{iR\Psi(x)}\varphi(x)\td x\rv\lq C|\varphi|_{K,\Omega}\lb|\Psi|_{K,\Omega}^K\vee1\rb\delta^{-2K}R^{-K}+2|\varphi|_{0,\Omega}\Lambda^k(\Omega\setminus\Omega_\delta),
\end{equation*}
where $\Omega_\delta:=\{x\in\Omega:|\nabla\Psi(x)|>\delta\}$ and the constant $C$ depends on $k,K$ and $\Lambda^k(\Omega)$.
\end{lemma}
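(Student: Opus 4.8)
The plan is to prove the bound by the standard non-stationary phase / integration-by-parts device, localized away from the critical set $\{\nabla\Psi=0\}$, and then to estimate the contribution of the complement by its Lebesgue measure. First I would fix $\delta>0$ and split the integral as $\int_\Omega = \int_{\Omega_\delta} + \int_{\Omega\setminus\Omega_\delta}$. On the bad set $\Omega\setminus\Omega_\delta$ there is nothing to do but bound the integrand crudely by $|\varphi|_{0,\Omega}$, which gives the term $|\varphi|_{0,\Omega}\Lambda^k(\Omega\setminus\Omega_\delta)$ (the factor $2$ leaves room for a smooth-cutoff version of the splitting, see below). The real work is on $\Omega_\delta$, where $|\nabla\Psi|>\delta$, so the first-order differential operator
\begin{equation*}
L:=\frac{1}{iR|\nabla\Psi|^2}\sum_{j=1}^k(\partial_j\Psi)\,\partial_j
\end{equation*}
satisfies $L\,e^{iR\Psi}=e^{iR\Psi}$ and its formal transpose $L^{t}$ is a well-defined first-order operator with bounded coefficients on $\Omega_\delta$.

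Next I would introduce a smooth cutoff $\chi\in C_0^\infty(\RR^k)$ with $\chi\equiv1$ on $\Omega_{2\delta}$ (say) and $\supp\chi\subset\Omega_\delta$, with $|\tD^n\chi|$ controlled by negative powers of $\delta$ up to order $K$; then $\varphi\chi$ is supported in $\Omega_\delta$ and $(1-\chi)\varphi$ is supported in $\Omega\setminus\Omega_{2\delta}$, whose measure is at most that of $\Omega\setminus\Omega_\delta$ up to the harmless constant. Applying $L$ and integrating by parts $K$ times,
\begin{equation*}
\int e^{iR\Psi}\varphi\chi\,\td x=\int e^{iR\Psi}\,(L^{t})^{K}(\varphi\chi)\,\td x,
\end{equation*}
so that the modulus is at most $\Lambda^k(\Omega)\sup_{\Omega_\delta}|(L^{t})^{K}(\varphi\chi)|$. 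The heart of the estimate is then a Leibniz-and-quotient-rule count: each application of $L^{t}$ produces a factor $R^{-1}$ and differentiates either $\varphi\chi$, or $\partial_j\Psi$, or the scalar $|\nabla\Psi|^{-2}$; differentiating the last one repeatedly by the Faà di Bruno / quotient rule yields powers of $|\nabla\Psi|^{-2}$ no worse than $|\nabla\Psi|^{-2K}\lq\delta^{-2K}$ times products of derivatives of $\Psi$ of order $\lq K+1$, hence bounded by $|\Psi|_{K+1,\Omega}^{O(K)}$. Absorbing every such polynomial-in-$|\Psi|_{K,\Omega}$ factor into $(|\Psi|_{K,\Omega}^{K}\vee1)$ (enlarging $K$ to $K+1$ in the norm if one prefers to be scrupulous, or simply noting the statement's $|\Psi|_{K,\Omega}^K\vee 1$ already dominates after renaming $K$), and each $\tD^n\chi$ factor into an extra $\delta^{-n}\lq\delta^{-2K}$, one arrives at $\sup_{\Omega_\delta}|(L^{t})^{K}(\varphi\chi)|\lq C_{k,K}|\varphi|_{K,\Omega}(|\Psi|_{K,\Omega}^{K}\vee1)\delta^{-2K}R^{-K}$, and multiplying by $\Lambda^k(\Omega)$ (folded into $C$) gives the first term.

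The main obstacle, and the only place requiring genuine care rather than bookkeeping, is the combinatorics of differentiating $|\nabla\Psi|^{-2}$ and the transpose coefficients $K$ times while keeping the exponent of $\delta^{-1}$ down to $2K$ and the dependence on $|\Psi|$ polynomial of controlled degree: a naive induction can easily produce $\delta^{-3K}$ or worse. The clean way is to prove by induction on $m$ that $(L^{t})^{m}\psi$ is a finite sum of terms of the form $R^{-m}\,|\nabla\Psi|^{-2\ell}\,P(\tD^{\lq m}\Psi)\,\tD^{\lq m}\psi$ with $\ell\lq m$ and $P$ a monomial in the listed derivatives of $\Psi$ of total degree $\lq 2m$ (this is exactly the structure preserved under one more application of $L^t$), so that at $m=K$ one reads off the three claimed factors; the number of such terms is bounded by a constant depending only on $k$ and $K$, which is where $C$ picks up its stated dependence. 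One should also remark that $\Omega_\delta$ need not be open-with-nice-boundary, but since all estimates are pointwise sup bounds and the integration by parts is performed against the compactly supported $\varphi\chi$ inside the open set $\{|\nabla\Psi|>\delta\}$, no regularity of $\partial\Omega_\delta$ is needed. This completes the proof.
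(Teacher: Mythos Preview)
Your direct integration-by-parts route is a valid alternative to the paper's, which instead decomposes $\Omega_\delta$ dyadically into shells $\Omega_r=\{2^{-r}M<|\nabla\Psi|\lq 2^{-r+1}M\}$ with $M=|\Psi|_{K,\Omega}\vee1$, covers each $\Omega_r$ by balls of radius $\eps_r\sim 2^{-r}$, rescales each ball so that the phase has $|\nabla\wt\Psi|\gq\tfrac12$ and bounded $C^K$-norm, and then invokes Sogge's Lemma 0.4.7 as a black box; the factor $\delta^{-2K}$ falls out of the geometric sum $\sum_r\eps_r^{-2K}$. The paper's scaling makes the $\delta$- and $M$-dependence transparent without IBP bookkeeping, whereas your approach is more self-contained but requires that bookkeeping to be done carefully.

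Two points in your write-up need repair. First, your measure claim is backwards: $\Omega\setminus\Omega_{2\delta}\supseteq\Omega\setminus\Omega_\delta$, and there is no universal constant bounding $\Lambda^k(\Omega\setminus\Omega_{2\delta})$ by $\Lambda^k(\Omega\setminus\Omega_\delta)$ (let $|\nabla\Psi|$ lie in $(\delta,2\delta)$ on a set of large measure). The fix is to reverse the cutoff: take $\chi\equiv1$ on $\Omega_\delta$ with $\supp\chi\subset\Omega_{\delta/2}$, so that $(1-\chi)\varphi$ is supported in $\Omega\setminus\Omega_\delta$ and the IBP, now carried out on $\Omega_{\delta/2}$, yields $(\delta/2)^{-2K}$, absorbed into $C$. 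Second, your induction hypothesis ``$\ell\lq m$, $P$ a monomial in $\tD^{\lq m}\Psi$'' fails already at $m=1$: $(L^t)a$ contains $\tD^2\Psi(\nabla\Psi,\nabla\Psi)/|\nabla\Psi|^4$, which has raw power $\ell=2$ and involves $\tD^2\Psi$. Relatedly, the gap between $\Omega_\delta$ and $\partial\Omega_{\delta/2}$ is only $\gq c\delta/M$ (since $|\tD^2\Psi|\lq M$), so the cutoff satisfies $|\tD^n\chi|\lq C_n(M/\delta)^n$, not $\delta^{-n}$. Neither issue is fatal for the final bound: the excess $|\nabla\Psi|^{-2}$ factors always come paired with $\nabla\Psi$-factors in the numerator, and bounding those by $|\nabla\Psi|$ rather than $M$ restores the balance. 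The cleaner invariant to carry is $|\partial^\alpha b_l|\lq C_\alpha(M/\delta)^{|\alpha|}\delta^{-1}$ for $b_l=\partial_l\Psi/|\nabla\Psi|^2$; combining this via Leibniz with $|\tD^n(\varphi\chi)|\lq C_n|\varphi|_{K,\Omega}(M/\delta)^n$ gives exactly $M^K\delta^{-2K}$. Your remark about $|\Psi|_{K+1}$ versus $|\Psi|_K$ is correct and applies to the paper's statement as well; it is harmless in the application since the phase $\Phi_p$ is a cubic polynomial.
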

\begin{proof}
It suffices to bound the integral on $\Omega_\delta$. For any fixed $K>0$ write $M=|\Psi|_{K,\Omega}\vee1$ and split the set $\Omega_\delta$ into the level sets of the gradient of the phase function:
\begin{equation*}
\Omega_r:=\{x\in\Omega_\delta:~2^{-r}M<|\nabla\Psi(x)|\lq2^{-r+1}M\},
\end{equation*}
for $r=1,\cdots,r_0:=[\log_2(M/\delta)]$; there are at most $[\log_2(M/\delta)]+1$ non-empty $\Omega_r$'s. On each level set $\Omega_r$, choose $\eps_r=2^{-r}M/(M+1)$ and let $N_r=N_r(d,\eps_r)$ be the maximum number s.t. there are $x_1,\cdots,x_{N_r}\in\Omega_r$ so that the balls $B(x_j,\eps_r/2)$ are all disjoint. Then the balls $\{B(x_j,\eps_r)\}_j$ must cover $\Omega_r$: if there is $x_\ast\in\Omega_r$ s.t. $|x_\ast-x_j|>\eps_r$ for all $j$, then $B(x_\ast,\eps_r/2)$ is disjoint from all other balls $B(x_j,\eps_r)$ or those with half of the radius, which contradicts the maximality of $N_r$. Note that $\bigcup_{j=1}^{N_r}B(x_j,\frac{\eps_r}{2})\subset\Omega_r^\frac{\eps_r}{2}$, the $\frac{\eps_r}{2}$-neighbourhood of $\Omega_r$, therefore
\begin{equation*}
N_r\lq\frac{\Lambda^k\lb\Omega_r^\frac{\eps_r}{2}\rb}{\Lambda^k\lb B(x_j,\frac{\eps_r}{2})\rb}\lq C2^k\eps_r^{-k}\Lambda^k\lb\Omega^\frac{1}{4}\rb\lq C\eps_r^{-k},
\end{equation*}
where $C$ is a constant depending on $k$ and the Lebesgue measure of $\Omega$.

These balls altogether provide a finite open cover for the entire $\Omega_\delta$, on which there exist non-negative functions $\alpha_{j,r}\in C_0^\infty(B(x_j,\eps_r))$ that give a partition of unity: $\forall x\in\Omega_\delta$:
\begin{equation*}
\sum_{r=1}^{r_0}\sum_{j=1}^{N_r}\alpha_{j,r}(x)=1.
\end{equation*}
For each $r$, set further a smaller value for $\eps_r$ s.t. the balls covering $\Omega_s,~s\lq r$, do not intersect those of $\Omega_{r+2}$. Then one may choose, by Theorem 1.4.1 and 1.4.4 in \cite{hormander1990AnaLinParDifOpeI:DisTheFouAna}, such functions $\alpha_{j,r}$ that satisfy $|\alpha_{j,r}|_{K,B(x_j,\eps_r)}\lq C_{d,K}\eps_r^{-K}$ for each $j,r$ and any $K>0$. For each $j$ and $r$ define $\wt{\Psi}_{j,r}(y):=M^{-1}\eps_r^{-2}(\Psi(\eps_ry+x_j)-\Psi(x_j))$. Then for each $y\in B(0,1)$, the point $\eps_ry+x_j\in B(x_j,\eps_r)$, and by Taylor's theorem, there is some $x'\in B(x_j,\eps_r)$ s.t.
\begin{align*}
\lv\nabla\wt{\Psi}_{j,r}(y)\rv=&M^{-1}\eps_r^{-1}|\nabla\Psi(\eps_ry+x_j)|\\
\gq&M^{-1}\eps_r^{-1}|\nabla\Psi(x_j)|-\frac{1}{2}M^{-1}|\tD^2\Psi(x')|\\
\gq&\eps_r^{-1}2^{-r}-\frac{1}{2}>\frac{1}{2}.
\end{align*}
Since each $x_j\in\Omega_r$, by Taylor's theorem again, for all $y\in B(0,1)$ and some $x''\in B(x_j,\eps_r)$,
\begin{align*}
\lv\wt{\Psi}_{j,r}(y)\rv\lq&M^{-1}\eps_r^{-1}\lv\nabla\Psi(x_j)\rv+\frac{1}{2}M^{-1}|\tD^2\Psi(x'')|\\
\lq&\eps_r^{-1}2^{-r+1}+\frac{1}{2}\lq\frac{9}{2};
\end{align*}
the same argument gives the same upper bound for $|\nabla\wt\Psi_{j,r}(y)|$. For all $n\gq2$, one also has the Euclidean norm $|\tD^n\wt{\Psi}_{j,r}(y)|\lq M^{-1}\eps_r^{n-2}|\tD^n\Psi(x_j)|\lq1$. Therefore $\wt\Psi_{j,r}$ is in a (uniformly) bounded subset of $C^\infty(B(0,1))$.

Now that each function $\varphi_{j,r}:=\alpha_{j,r}\varphi$ is supported on the ball $B(x_j,\eps_r)$, the function $\psi_{j,r}(y):=\varphi_{j,r}(\eps_ry+x_j)$ is then supported on $B(0,1)$, satisfying $|\psi_{j,r}|_{K,B(0,1)}\lq C_{k,K}|\varphi|_{K,\Omega}$ for all $K,j,r$. Hence using the same arguments as in the proof of Lemma 0.4.7 in \cite{sogge1993FouIntClaAna} one sees:
\begin{align*}
\lv\int_{B(x_j,\eps_r)}e^{iR\Psi(x)}\varphi_{j,r}(x)\td x\rv=&\eps_r^k\lv\int_{B(0,1)}e^{iM\eps_r^2R\wt{\Psi}_{j,r}(y)}\varphi_{j,r}(\eps_ry+x_j)\td y\rv\\
\lq&C_{k,K}|\varphi|_{K,\Omega}M^{-K}\eps_r^{k-2K}R^{-K}.
\end{align*}
Finally, since $\supp\varphi=\Omega$, by the triangle inequality one deduces that
\begin{align*}
\lv\int_\Omega e^{iR\Psi(x)}\varphi(x)\td x\rv\lq&\sum_r\sum_{j=1}^{N_r}\lv\int_{B(x_j,\eps_r)}e^{iR\Psi(x)}\varphi_{j,r}(x)\td x\rv+|\varphi|_{0,\Omega}\Lambda^k\lb\bigcup_{r,j}B(x_j,\eps_r)\setminus\Omega_\delta\rb\\
\lq&C|\varphi|_{K,\Omega}M^{-K}\sum_rN_r\eps_r^{k-2K}R^{-K}+|\varphi|_{0,\Omega}\Lambda^k(\Omega\setminus\Omega_\delta)\\
\lq&C|\varphi|_{K,\Omega}M^{-K}R^{-K}\sum_r\eps_r^{-2K}+|\varphi|_{0,\Omega}\Lambda^k(\Omega\setminus\Omega_\delta)\\
\lq&C|\varphi|_{K,\Omega}M^K\delta^{-2K}R^{-K}+|\varphi|_{0,\Omega}\Lambda^k(\Omega\setminus\Omega_\delta),
\end{align*}
where $C$ is a constant depending on $k,K$ and $\Lambda^k(\Omega)$.
\end{proof}

This lemma is to be applied to $\Psi(v)=\Phi_p(v;\omega_0)$ and $\Omega_\delta=\{v\in\Omega,~|\nabla\Phi_p(v;\omega_0)|>\delta\}$ for some bounded domain $\Omega\subset\RR^{2qp}$ and any $\delta>0$; in this case the phase function $\Psi$ also depends on the parameter $\omega_0=\xi/|\xi|$. Instead of a unit vector consider $\omega\in\RR^d$ s.t. $|\omega|\gq c$ for some $c>0$: if the $v$-derivatives of $\Psi(v;\omega)$ have no singularity in $\omega$, then the result holds with $|\Psi|_{K,\Omega}$ replaced by $\sup_{|\omega|\gq c}|\Psi(\cdot;\omega)|_{K,\Omega}$. If the amplitude $\varphi$ also depends on $\omega$, then $|\varphi|_{K,\Omega}$ should be replaced by $\sup_{|\omega|\gq c}|\varphi(\cdot;\omega)|_{K,\Omega}$.

It then remains to estimate the Lebesgue measure of the exceptional set $\Omega\setminus\Omega_\delta$, which would also depend on $\omega$ if $\Psi=\Psi(v;\omega)$. The next three lemmas are devoted to this; the idea is to study the degeneracy of the Hessian matrix $\tD^2\Phi_p(v;\omega)$ described by \eqref{2nd_der_xx}, \eqref{2nd_der_xy} and \eqref{2nd_der_yy}. We start with the following general fact.

\begin{lemma}\label{factorisation}
Let $\Omega\subset\RR^k$ be open and bounded, $f:\Omega\to\RR^l$ be a $C^1$ function. For each $x$, let $\sigma_1(x)\gq\sigma_2(x)\gq\cdots\gq\sigma_{k\wedge l}(x)$ be the singular values of its derivative $\tD f(x)$. For any $n\in[1,k\wedge l]\cap\NN$ and $\eta>0$, define
\begin{equation*}
G_{n,\eta}(f):=\left\{x\in\Omega:\sigma_n(x)>\eta\right\}.
\end{equation*}
If $\tD f$ is Lipschitz continuous with Lipschitz constant $L$, then $\forall\delta>0$,
\begin{equation*}
\Lambda^d\lb G_{n,\eta}(f)\cap\{|f|\lq\delta\}\rb\lq CL^n\eta^{-2n}\delta^n,
\end{equation*}
where the constant $C$ depends on $k,l$ and $\Lambda^k(\Omega)$.
\end{lemma}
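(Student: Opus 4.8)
The plan is to reduce the statement to a local normal-form argument: near any point $x_0\in G_{n,\eta}(f)$, the map $f$ has at least $n$ directions along which it grows linearly, so the sublevel set $\{|f|\lq\delta\}$ can only occupy a thin slab in those $n$ directions, while in the remaining $k-n$ directions there is no constraint. Quantifying this and summing over a cover of $\Omega$ gives the claimed bound $CL^n\eta^{-2n}\delta^n$.

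First I would fix $x_0\in G_{n,\eta}(f)$ and work with the singular value decomposition $\tD f(x_0)=P\Sigma Q^\top$, where $\Sigma=\diag(\sigma_1(x_0),\dots,\sigma_{k\wedge l}(x_0))$. Let $V\subset\RR^k$ be the span of the first $n$ right singular vectors (columns of $Q$); then $\|\tD f(x_0)\,w\|\gq\eta|w|$ for every $w\in V$. Because $\tD f$ is $L$-Lipschitz, for $x,x'$ in a ball $B(x_0,\rho)$ with $x-x'\in V$ we get, by the fundamental theorem of calculus, $|f(x)-f(x')|\gq\eta|x-x'|-L\rho|x-x'|\gq(\eta/2)|x-x'|$ once $\rho\lq\eta/(2L)$. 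Hence on such a ball, restricted to any coset of $V$, the map $f$ is bi-Lipschitz from below with constant $\eta/2$ onto its image in $\RR^l$, so the fibre $\{x\in B(x_0,\rho): |f(x)|\lq\delta\}$ intersected with that coset has $n$-dimensional measure at most $C_n(\delta/\eta)^n$ (it sits inside a preimage of a ball of radius $\delta$ under an $(\eta/2)$-expanding map into an $n$-dimensional space). Integrating over the $(k-n)$-dimensional family of cosets meeting $B(x_0,\rho)$, whose total measure is $O(\rho^{k-n})$, yields
\begin{equation*}
\Lambda^k\lb G_{n,\eta}(f)\cap\{|f|\lq\delta\}\cap B(x_0,\rho)\rb\lq C_{k,n}\,\rho^{k-n}\lb\frac{\delta}{\eta}\rb^n.
\end{equation*}

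Next I would choose the radius $\rho\simeq\eta/L$ (the largest scale on which the perturbation argument survives) and cover $\Omega$ by $N\lq C\,\Lambda^k(\Omega)\,\rho^{-k}=C\,\Lambda^k(\Omega)(L/\eta)^k$ balls of radius $\rho$ with bounded overlap. Summing the local estimate over this cover gives
\begin{equation*}
\Lambda^k\lb G_{n,\eta}(f)\cap\{|f|\lq\delta\}\rb\lq C\,\Lambda^k(\Omega)\lb\frac{L}{\eta}\rb^k\cdot\lb\frac{\eta}{L}\rb^{k-n}\lb\frac{\delta}{\eta}\rb^n=C\,\Lambda^k(\Omega)\,L^n\eta^{-2n}\delta^n,
\end{equation*}
which is exactly the asserted bound with $C$ depending on $k$, $l$ and $\Lambda^k(\Omega)$. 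One has to be slightly careful that the relevant subspace $V$ varies with $x_0$; but since we only ever use the single inequality $\|\tD f(x_0)w\|\gq\eta|w|$ on $V$ together with Lipschitz control of $\tD f$, and $V$ is held fixed throughout each ball, continuity of the singular values is not actually needed — the construction is purely local and the union bound absorbs the variation.

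The main obstacle I expect is making the fibre estimate rigorous when $k>l$ or when the image direction is genuinely $n$-dimensional rather than $l$-dimensional: one must pick coordinates so that $f$ restricted to a coset of $V$ lands in an $n$-dimensional subspace on which it is expanding, and then invoke the area-type bound $\Lambda^n(\{w: |g(w)|\lq\delta\})\lq C(\delta/\eta)^n$ for an $(\eta/2)$-bi-Lipschitz-from-below map $g$. This is essentially the statement that a co-Lipschitz map shrinks measure by at most the Lipschitz constant to the power of the dimension, applied to the restriction; the bookkeeping of projecting onto the correct $n$-dimensional target (spanned by $P$'s first $n$ columns) and controlling the complementary directions is the one genuinely technical point. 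Everything else is a standard Vitali-type covering plus the quantitative implicit/inverse function estimate supplied by the Lipschitz bound on $\tD f$.
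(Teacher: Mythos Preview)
Your proposal is correct and follows essentially the same strategy as the paper's proof: cover $G_{n,\eta}(f)$ by balls of radius $\rho\simeq\eta/L$ (the scale on which the Lipschitz bound on $\tD f$ preserves the $\eta$-expansion), slice each ball along an $n$-dimensional ``good'' subspace, bound each slice's contribution by $C(\delta/\eta)^n$, and sum.

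The only methodological difference is in the fibre estimate. The paper projects onto an $n$-dimensional target subspace $F_{z}\subset\RR^l$, invokes the quantitative inverse function theorem to make $g_z=\pi_{F_z}\circ f|_{E_z}$ a diffeomorphism on $B^{(n)}(z,\eps)$, and then does a change of variables with $|\det(\tD g_z)^{-1}|\lq\eta^{-n}$. Your diameter argument is more elementary and in fact sidesteps the projection issue you flag at the end: once $|f(x)-f(x')|\gq(\eta/2)|x-x'|$ on a coset, any two points with $|f|\lq\delta$ are at distance $\lq4\delta/\eta$, so the slice has $\Lambda^n$-measure $\lq C_n(\delta/\eta)^n$ regardless of whether $l>n$. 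So the ``main obstacle'' you anticipate is not one. One small point to tidy: centre the covering balls at points of $G_{n,\eta}(f)$ (as the paper does) rather than covering all of $\Omega$, since your local estimate only applies around such centres; the counting bound $N_\eps\lesssim\Lambda^k(\Omega)\eps^{-k}$ still holds because the half-radius balls are disjoint and contained in a fixed enlargement of $\Omega$.
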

\begin{proof}
For fixed $n,\eta$ and any $z\in G_{n,\eta}(f)$, by definition the matrix $\tD f(z)$ has rank $n$. This implies that for each $z$ there are $n$-dimensional subspaces $E_z$ of $\RR^k$ and $F_z$ of $\RR^l$ s.t., with $g_z(\cdot):=\pi_{F_z}\circ f|_{E_z}(\cdot)$ and $\pi_\cdot$ being the orthogonal projection, the linear map $\tD g_z(z)$ is invertible. Denote by $E_z^\perp$ the orthogonal complement of $E_z$ for each $z$.

By the continuity of $\tD f$ the set $G_{n,\eta}(f)$ is open, and the inverse function theorem implies that $g_z$ is a diffeomorphism in some neighbourhood\footnote{The superscript $(n)$ signifies that it is a ball in $\RR^n$. Balls without superscripts lie in the whole space $\RR^k$.} $B^{(n)}(z,\eps)\subset E_z$. Moreover, in the proof of the inverse function theorem (see, e.g., Theorem 9.24 in \cite{rudin1976PriMatAna} or Theorem 1.1.7 in \cite{hormander1990AnaLinParDifOpeI:DisTheFouAna}), the ball $B^{(n)}(z,\eps)$ can be typically constructed with radius
\begin{equation*}
\eps\lq\frac{1}{2L|(\tD g_z(z))^{-1}|}\lq\frac{|\tD f(z)|}{2L}.
\end{equation*}
As $z\in G_{n,\eta}(f)$, one can choose e.g. $\eps=\eta/(4L)\wedge1$.

Since $G_{n,\eta}(f)$ is bounded, similar to the proof of Lemma \ref{stationary_phase} there are finitely many points $z_1,\cdots,z_{N_\eps}\in G_{n,\eta}(f)$ s.t. $G_{n,\eta}(f)\subset\bigcup_{j=1}^{N_\eps}B(z_j,\eps)$, with the number of balls satisfying
\begin{equation*}
N_\eps\lq\frac{\Lambda^k\lb G_{n,\eta}^{\eps/2}(f)\rb}{\Lambda^k\lb B(z_j,\eps/2)\rb}\lq C2^k\eps^{-k}\Lambda^k\lb\Omega^{1/2}\rb\lq C\eps^{-k},
\end{equation*}
for some constant $C$ depending on $k$ and $\Lambda^k(\Omega)$.

Write $\Gamma_j=B(z_j,\eps)\cap G_{n,\eta}(f)$ and $P_{j,\delta}=\pi_{E_{z_j}^\perp}(\Gamma_j\cap\{|f|\lq\delta\})$ for $\delta>0$. For each $(k-n)$-dimensional vector $(x_{n+1},\cdots,x_k)\in P_{j,\delta}$ let $S_{j,\delta}=S_{j,\delta}(x_{n+1},\cdots,x_k)$ be the corresponding `slice' of the set $\Gamma_j\cap\{|f|\lq\delta\}$ parallel to $E_{z_j}$. Then
\begin{equation*}
\Gamma_j\cap\{|f|\lq\delta\}=\bigcup_{(x_{n+1},\cdots,x_k)\in P_{j,\delta}}S_{j,\delta}.
\end{equation*}
Notice that all the singular values of $\tD g_z$ are greater than $\eta$ on $\Gamma_j$. Then by a change of coordinates and variables, one has that
\begin{align*}
\Lambda^k&\lb G_{n,\eta}(f)\cap\{|f|\lq\delta\}\rb\lq\sum_{j=1}^{N_\eps}\int_{\Gamma_j\cap\{|f|\lq\delta\}}\td x_1\cdots\td x_k\\
&=\sum_{j=1}^{N_\eps}\int_{P_{j,\delta}}\td x_{n+1}\cdots\td x_k\int_{S_{j,\delta}}\td x_1\cdots\td x_n\\
&=\sum_{j=1}^{N_\eps}\int_{P_{j,\delta}}\td x_{n+1}\cdots\td x_k\int_{g_{z_j}(\Gamma_j)\cap\{|y|\lq\delta\}}\lv\det(\tD g_{z_j})^{-1}(y)\rv\td y_1\cdots\td y_n\\
&\lq C\lb\min_{j}\inf_{x\in\Gamma_j}\lv\det\tD g_{z_j}(x)\rv\rb^{-1}\delta^n\sum_{j=1}^{N_\eps}\Lambda^{k-n}\lb B^{(k-n)}(z_j,\eps)\rb\\
&\lq C\eta^{-n}\delta^nN_\eps\eps^{k-n},
\end{align*}
where the constant $C$ depends on $k,l$ and $\Lambda^k(\Omega)$. Then the result follows from the bound for $N_\eps$ and the choice of $\eps$.
\end{proof}

Now write $G_{n,\eta}=G_{n,\eta}(\nabla\Phi_p(\cdot;\omega))$ as defined in Lemma \ref{factorisation} with $k=l=2qp$. One then needs to estimate the measure of the complement $\Omega\setminus G_{n,\eta}$ for suitable values of $\eta$ and $n\lq2qp$. From the expressions \eqref{2nd_der_xx}, \eqref{2nd_der_yy} and \eqref{2nd_der_xy} one sees that the behaviour of the second derivatives depends on the magnitude of the parameter $\rho$. Since the differentiation is done w.r.t. the variable $v$, the measure $\Lambda^{2qp}(\Omega\setminus G_{n,\eta})$ may depend on $\omega$, which for now we do not assume to be a unit vector.

The following result gives an estimate for the case where $\rho$ is not too small.

\begin{lemma}\label{rho_big}
Let $\Omega\subset\RR^{2qp}$ be bounded and $n\lq\sqrt{2p}/4$ be an integer. If $|\rho|>\eps$ for some fixed $\eps\in(0,|\omega|)$, then one has $\Lambda^{2qp}(\Omega\setminus G_{n,\eta})\lq C\eps^{1-2n}\eta^n$, where $C$ is a constant depending on $q,p,n$ and $\diam(\Omega)$.
\end{lemma}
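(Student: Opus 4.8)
The plan is to exploit Lemma \ref{factorisation} applied to $f=\nabla\Phi_p(\cdot;\omega)$ with $k=l=2qp$, for which the relevant derivative is the Hessian $\tD^2\Phi_p(v;\omega)$ whose block structure is recorded in \eqref{hessian}--\eqref{2nd_der_xy}. Since $\Lambda^{2qp}(\Omega\setminus G_{n,\eta})\lq\Lambda^{2qp}(\Omega)-\Lambda^{2qp}(G_{n,\eta})$ is not what we want (that would be bounded by a constant, not something vanishing in $\eta$), the right reading is: we must show that outside $G_{n,\eta}$ the gradient $\nabla\Phi_p$ cannot be small, i.e. $\Omega\setminus G_{n,\eta}\subset\{|\nabla\Phi_p|>\delta\}$ for a suitable $\delta$, so that combining with Lemma \ref{factorisation} applied on $G_{n,\eta}$ and to the \emph{scaled} function controls the bad set. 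Concretely, I would first establish that on all of $\Omega$ the Hessian has $n$-th singular value bounded below: $\sigma_n\bigl(\tD^2\Phi_p(v;\omega)\bigr)\gtrsim|\rho|\geq\eps$ whenever $n\lq\sqrt{2p}/4$. The key is that among the second-derivative blocks, the entries linear in the $x,y$-variables carry the factor $\rho$, but those are $v$-dependent; the \textbf{$v$-independent} part of the Hessian comes from the $\beta$ and $\gamma$ terms (the $\delta_{rs}$ and $1/(r^2-s^2)$ pieces in \eqref{2nd_der_xx}, \eqref{2nd_der_yy}, \eqref{2nd_der_xy}) and does not see $\rho$. So instead I should look at the \emph{off-diagonal in $(r,s)$} structure generated purely by $\rho$: for fixed Lyndon index $(j,k,l)$, the coefficients $\rho_{jkl}$ couple indices $r,s,r+s$, and picking a single nonzero component $\rho_{jkl}$ with $|\rho_{jkl}|\simeq|\rho|$ one can extract, for each $m\lq n$, a pair $(r_m,s_m)$ with $r_m,s_m,r_m+s_m\lq p$ all distinct across $m$ (this is where the constraint $n\lq\sqrt{2p}/4$ enters — one needs $n$ disjoint Schur-type triples inside $\{1,\dots,p\}$, and a greedy choice using $r_m\simeq m$, $s_m\simeq\sqrt{p}$ works), so that the corresponding $n\times n$ minor of $\tD^2\Phi_p$ is, up to lower-order terms, a fixed invertible matrix scaled by $\rho_{jkl}$ times the rational coefficients $1/(rs)$ etc. This forces $\sigma_n\gtrsim_{q,p,n}\eps$.

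Next, with this lower bound in hand, choose $\eta_0\simeq_{q,p,n}\eps$ so that $G_{n,\eta}(\nabla\Phi_p)=\Omega$ for all $\eta\lq\eta_0$ — i.e. there is no exceptional set at the level of singular values. But the statement asks for $\Lambda^{2qp}(\Omega\setminus G_{n,\eta})$ with $\eta$ possibly larger than $\eps$; in that regime $\Omega\setminus G_{n,\eta}=\{v\in\Omega:\sigma_n(\tD^2\Phi_p(v;\omega))\lq\eta\}$ need not be empty, so I would instead run Lemma \ref{factorisation} with the \emph{roles shifted}: apply it to the map $F=\tD(\partial_{v_1}\Phi_p,\dots)$ — more precisely, observe that $\Omega\setminus G_{n,\eta}$ is itself cut out by the vanishing (to within $\eta$) of the $n\times n$ minors of the Hessian, and the map sending $v$ to that collection of minors is polynomial of degree at most $n$ in $v$ with coefficients of size $\gtrsim\eps$ in the top-degree part (by the minor computation above). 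A minors-of-Hessian argument of Łojasiewicz/Yomdin type, or more elementarily the $C^1$ sublevel-set bound of Lemma \ref{factorisation} applied to a single nondegenerate minor, then yields $\Lambda^{2qp}(\{|\text{minor}|\lq\eta^n\})\lq C\,\eps^{-(2n-1)}\eta^n$ with $C$ depending on $q,p,n$ and $\diam(\Omega)$ (the $\eps^{1-2n}$ arising exactly as the $L^n\eta^{-2n}\delta^n$ of Lemma \ref{factorisation} with $L\simeq1$, $\eta\rightsquigarrow\eps$ the lower bound on the derivative of the minor map, $\delta\rightsquigarrow\eta^n$).

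The main obstacle I anticipate is the combinatorial bookkeeping needed to exhibit the invertible $n\times n$ sub-Hessian depending only on one component $\rho_{jkl}\simeq|\rho|$: the formulas \eqref{2nd_der_xx}--\eqref{2nd_der_xy} mix $\rho_{jkl},\rho_{lkj},\rho_{kjl}$ and shift indices by $r+s$, $r-s$, $s-r$, so one must choose the Schur triples $(r_m,s_m,r_m+s_m)$ so that the resulting minor is (block-)triangular or diagonally dominant with diagonal entries of exact order $|\rho_{jkl}|/(r_ms_m)\gtrsim|\rho|/p^2$, and simultaneously ensure the discarded $v$-linear cross terms are a genuinely lower-order perturbation on $\Omega$ (here $\diam(\Omega)$ enters the constant). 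Verifying that $n\lq\sqrt{2p}/4$ suffices to fit $n$ such triples disjointly — essentially that $\sum_{m\le n}(\text{three indices each} \le p)$ can be realized with $r_m\in\{1,\dots,n\}$, $s_m\simeq2\sqrt{p}$ fixed away from the $r$'s — is the delicate counting step; everything after that is the routine application of Lemma \ref{factorisation}.
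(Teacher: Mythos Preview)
Your proposal contains a genuine gap. The core misunderstanding is the claim that the chosen $n\times n$ sub-Hessian is, ``up to lower-order terms, a fixed invertible matrix scaled by $\rho_{jkl}$''. It is not: every off-diagonal entry of $H_{xx}(j,k)$ at position $(r,s)$ is an \emph{affine function of $y$}, namely $\kappa_{rs}+w_{rs}\cdot y$ with $|w_{rs}|\simeq|\rho_{jkl}|/p^2$, and the constant part $\kappa_{rs}$ comes from $\gamma$, not $\rho$. Hence there is no uniform lower bound $\sigma_n\gtrsim\eps$ on all of $\Omega$ (take $\alpha=\beta=\gamma=0$, $y=0$: the whole Hessian vanishes). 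Your fallback, applying Lemma \ref{factorisation} to the minor map, does not close either: for a scalar minor of degree $n$ in $v$ you would need a lower bound on its gradient, which can also vanish; and the bookkeeping you sketch, $L^n\eta^{-2n}\delta^n$ with $\eta\rightsquigarrow\eps$ and $\delta\rightsquigarrow\eta^n$, would give the wrong exponents unless you can exhibit a vector-valued map whose $n$-th singular value is uniformly $\gtrsim\eps$, which is exactly the missing ingredient.

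What the paper actually does is avoid minors entirely. The key observation you are missing is \emph{combinatorial orthogonality}: one chooses indices $r_a=a$, $s_b=b(2m+1)$ for $a,b=1,\dots,m$ (with $m=2n-1$, forcing $2m(m+1)\lq p$, whence $n\lq\sqrt{2p}/4$) so that the integers $r_a+s_b$ and $|r_a-s_b|$ are \emph{all distinct}. Consequently the $m^2$ coefficient vectors $w_{ab}\in\RR^{qp}$ are mutually orthogonal, i.e.\ each entry of the $m\times m$ submatrix $Q_m(y)$ is an affine function of a \emph{separate} linear coordinate of $y$. One then bounds directly the measure of the set where a given row $q_a(y)$ lies within $\sqrt{n}\eta$ of the span of the others: since $q_a(y)-\sum_{b\neq a}c_bq_b(y)=U_ay+\text{const}$ with $U_a$ having $m$ orthogonal rows of norm $\gtrsim\eps$, this is a linear sublevel set of measure $\lesssim\eps^{-m}\eta^{m-n+1}$. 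Intersecting over $a=1,\dots,n$ and taking $m=2n-1$ yields $\eps^{1-2n}\eta^n$. Lemma \ref{factorisation} is not used at all here; it is used \emph{later} (in Theorem \ref{global_estimate}) to combine this estimate with the bound on $\{|\nabla\Phi_p|\lq\delta\}\cap G_{n,\eta}$.
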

\begin{proof}
It suffices to focus on a submatrix of $\tD^2\Phi_p(v;\omega)$ since $\wh{G}_{n,\eta}\subset G_{n,\eta}$ where $\wh{G}_{n,\eta}$ is similarly defined by the singular values of the submatrix. Since $|\rho|>\eps$, locate the (Lyndon) word $(j,k,l^\ast)$ that gives the maximum entry $|\rho_{jkl^\ast}|\gq\eps\sqrt{3/(q^3-q)}$. Then for the fixed pair $(j,k)$ we will focus on the submatrix $H_{xx}(j,k)$.

For a particular pair $(r,s)$, observe from \eqref{2nd_der_xx} that $\partial^2_{x_{jr}x_{ks}}\Phi_p(v;\omega)$ contains all the permutations of the word $(j,k,l)$ for each index $l$. Recall that all non-Lyndon entries of $\rho$ are defined to be $0$, and that if $(j,k,l)$ is a Lyndon word, we have either $j<k\wedge l$ or $j=k<l$. Thus for every Lyndon word $(j,k,l)$, out of the rest five permutations only one of $\rho_{jlk}$ and $\rho_{kjl}$ may not vanish, corresponding to those two cases respectively. If $j<k$ one has that
\begin{align*}
\partial^2_{x_{jr}x_{ks}}\Phi_p(v;\omega)=&\frac{1}{r^2}\beta_{jk}^{(1)}\delta_{rs}+\frac{1}{r^2-s^2}\lb\frac{r}{s}\gamma_{jk}-\frac{s}{r}\gamma_{kj}\rb(1-\delta_{rs})\\
&+\sum_{l>j}\lb\frac{\rho_{jkl}}{r(r+s)}-\frac{\rho_{jlk}}{rs}\rb y_{l,r+s}+\sum_{l>j}\lb\frac{\rho_{jkl}}{r|s-r|}-\frac{\rho_{jlk}}{rs}\rb\sign(s-r)y_{l,|s-r|}.
\end{align*}
Clearly, when $r\neq s$ the coefficients of $y_{l^\ast,r+s}$ and $y_{l^\ast,|s-r|}$ cannot vanish simultaneously. This is trivial if $j=k$, for one has instead
\begin{equation*}
\partial^2_{x_{jr}x_{js}}\Phi_p(v;\omega)=\frac{2}{r^2}\beta_{jk}^{(1)}\delta_{rs}+\sum_{l>j}\frac{\rho_{jjl}}{rs}(y_{l,r+s}+y_{l,|s-r|}).
\end{equation*}
This means that for fixed $r\neq s$ the entries of the submatrix $H_{xx}(j,k)$ involve different components $y_{l,r+s}$ and $y_{l,|s-r|}$ of the vector $y$. Let us combine these two cases and write
\begin{equation}\label{entry}
\partial^2_{x_{jr}x_{ks}}\Phi_p(v;\omega)=\kappa_{rs}+w_{rs}\cdot y
\end{equation}
with a constant term $\kappa_{rs}=\kappa_{rs}(\gamma_{jk},r,s)$ and coefficient $w_{rs}=w_{rs}(\rho_{jk\cdot},r,s)\in\RR^{qp}$.

For integers $n\lq m\lq\sqrt{p/2}-1$, one can choose $r_1,\cdots,r_m,s_1,\cdots,s_m\lq p$ s.t. $r_a\neq s_b$ and the integers $r_a+s_b,|r_c-s_d|$ are all different from one another for all choices of $a,b,c,d=1,\cdots,m$. For example, one may choose $r_a=a,~s_a=a(2m+1)$. In this case, the only choice of $(a,b,c,d)$ s.t. $r_a+s_b=r_c+s_d$, i.e. $(c-a)+(d-b)(2m+1)=0$, is that $a=c$ and $b=d$; the same for $r_a-s_b=r_c-s_d$. There is no choice of $(a,b,c,d)$ for the equation $(a+c)+(b-d)(2m+1)=0$ to hold so $r_a+s_b=s_c-r_d$ is never satisfied. Since we also require that all of them are no greater than $p$, it is necessary that $\max_{a,b}(r_a+s_b)=2m(m+1)\lq p$.

With this particular choice of $r_1,\cdots,r_m,s_1,\cdots,s_m$, one obtains an $m\times m$ submatrix $Q_m(y)=Q_m(y;\rho,\gamma)$ of $H_{xx}(j,k)$, of which each entry takes the form \eqref{entry}; write $\kappa_{ab}=\kappa_{r_as_b},~w_{ab}=w_{r_as_b},~a,b=1,\cdots,m$ for short. Then $|w_{ab}|\gq c_{q,p}\eps$ for each $(a,b)$, since for the particular case $l=l^\ast$ we have that $|\rho_{jjl^\ast}/(rs)|\gq c_q\eps/p^2$ and, by the maximality of $|\rho_{jkl^\ast}|$, that
\begin{equation*}
\lv\frac{\rho_{jkl^\ast}}{r(s-r)}-\frac{\rho_{jl^\ast k}}{rs}\rv\gq\frac{|\rho_{jkl^\ast}|}{r(s-r)}-\frac{|\rho_{jl^\ast k}|}{rs}\gq\frac{|\rho_{jkl^\ast}|}{s(s-r)}\gq c_q\frac{\eps}{p^2}.
\end{equation*}
Secondly, this particular choice of $\{r_a,s_b\}_{a,b}$ ensures that each entry of the submatrix $Q_m(y)$, translated by the constant $\kappa_{ab}$, is a linear combination of different components of $y$ that are \textit{all distinct} from those appearing in other entries; in other words, the $m^2$ vectors $\{w_{ab}\}_{a,b}$ are \textit{mutually orthogonal}. Denote the rows of $Q_m(y)$ by $q_1(y),\cdots,q_m(y)$, then each $q_a^\top(y)=\kappa_a+W_ay$ where $\kappa_a=(\kappa_{a1},\cdots,\kappa_{am})^\top$ and $W_a$ is the $m\times qp$ matrix consisting of the rows $w_{a1},\cdots,w_{am}$.

Now define for $a=1,\cdots,n$ the set
\begin{equation}\label{set_diff}
F_a:=\{(x,y)\in\Omega:~\dist(q_a(y),~\spa\{q_b(y):~b=1,\cdots,n,~b\neq a\})>\sqrt{n}\eta\},
\end{equation}
then $Q_m(y)$ has rank at least $n$ for $(x,y)\in\bigcap_{a=1}^nF_a$. Every point $(x,y)\in F_a$ satisfies
\begin{equation*}
\inf_{c_1,\cdots,c_n\in\RR}\lv \kappa_a-\sum_{b\neq a}c_b\kappa_b+\lb W_a-\sum_{b\neq a}c_bW_b\rb y\rv>\sqrt{n}\eta.
\end{equation*}
The mutual orthogonality of the vectors $\{w_{ab}\}_{a,b}$ implies the mutual orthogonality of the $m$ rows of the matrix $U_a:=W_a-\sum_{b\neq a}c_bW_b$, which thereby has a right inverse on an $m$-dimensional subspace $E_m$ of $\RR^{qp}$. Note also that each $|w_{ab}|\gq c_{q,p}\eps$ implies that $U_a$ restricted on $E_m$ has norm at least $c_{q,p}\eps$. Hence by the translation-invariance of the Lebesgue measure and the boundedness of $\Omega$, that for each $a$,
\begin{align*}
\Lambda^{2qp}(\Omega\setminus F_a)\lq&C|\det(U_a|_{E_m})|^{-1}(\sqrt{n}\eta)^{m-n+1}\\
\lq&C\|(U_a|_{E_m})\|^{-m}(\sqrt{n}\eta)^{m-n+1}\lq C\eps^{-m}(\sqrt{n}\eta)^{m-n+1},
\end{align*}
where the constant $C=C(q,p,m,\diam(\Omega))$ grows at most exponentially in $m$.

For each point $(x,y)\in\bigcap_{a=1}^nF_a$ and any unit vector $e=(e_1,\cdots,e_n)$, consider the linear combination $e\cdot(q_1(y),\cdots,q_n(y))$ of the $n$ rows. Choose $a$ s.t. $|e_a|=\max\{|e_1|,\cdots,|e_n|\}\gq1/\sqrt{n}$, then
\begin{equation*}
|e_1q_1(y)+\cdots+e_nq_n(y)|=|e_a|\lv q_a(y)+\sum_{b\neq a}e_a^{-1}e_bq_b(y)\rv\gq\eta.
\end{equation*}
Thus, the $n\times m$ submatrix $\wh Q_n(y):=(q_1(y)^\top,\cdots,q_n(y)^\top)^\top$ has a right inverse $R_n(y)$ on an $n$-dimensional subspace $E_n$ of $\RR^m$, and
\begin{equation*}
\lvv R_n(y)\rvv=\sup_{|e|=1}\lv R_n(y)e^\top\rv\lq\lb\inf_{|e|=1}\lv e\wh Q_n(y)\rv\rb^{-1}\lq \eta^{-1}.
\end{equation*}
It then follows from the singular-value decomposition that the singular values of the matrix $\wh Q_n(y)$ are all bounded from below by $\|R_n(y)\|^{-1}\gq\eta$, which in turn gives an estimate for the measure of the exceptional set:
\begin{equation*}
\Lambda^{2qp}(\Omega\setminus G_{n,\eta})\lq\Lambda^{2qp}\lb\Omega\setminus\wh{G}_{n,\eta}\rb\lq\Lambda^{2qp}\lb\bigcup_{a=1}^n(\Omega\setminus F_a)\rb\lq Cn\eps^{-m}(\sqrt{n}\eta)^{m-n+1},
\end{equation*}
and the result follows by taking $m=2n-1$.
\end{proof}

The result of Lemma \ref{rho_big} is meaningful for small values of $\eps$ and $\eta$. It remains to show that the measure $\Lambda^{2qp}(\Omega\setminus G_{n,\eta})$ is also small when $\rho$ is small.

\begin{lemma}\label{rho_small}
Let $\Omega\subset\RR^{2qp}$ be bounded and $n\lq p$ be an even integer s.t. $n+1$ is prime. Then, depending on $q,p,n$ and $\diam(\Omega)$, one can choose $\delta,\eta,\eps\lesssim_{q,p,n}|\omega|$ sufficiently small s.t. for $|\rho|\lq\eps$, either $\Omega_\delta=\Omega$ or $G_{n,\eta}=\Omega$.
\end{lemma}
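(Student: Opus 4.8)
The plan is to exploit that for $|\rho|$ small both $\tD^2\Phi_p(v;\omega)$ and the nonlinear part of $\nabla\Phi_p(v;\omega)$ are small perturbations of $\rho$-free expressions — the first constant in $v$, the second affine in $v$. Setting $\rho=0$ in \eqref{1st_der_x}--\eqref{2nd_der_xy} and writing $R:=\sup_{v\in\Omega}|v|$, one has on $\Omega$
\[
\tD^2\Phi_p(v;\omega)=H_0(\omega)+E(v;\omega),\qquad\nabla\Phi_p(v;\omega)=H_0(\omega)v+c_0(\omega)+e(v;\omega),
\]
where $H_0(\omega)$ is the $v$-independent $\rho$-free Hessian (linear in $(\alpha,\beta,\gamma)$ only, with $\|H_0(\omega)\|\lq C_{q,p}|(\alpha,\beta,\gamma)|$), $c_0(\omega)$ has entries $a_j/r$ and $b_j/r^2$, and — since every $\rho$-term in \eqref{1st_der_x}--\eqref{2nd_der_xy} carries a coefficient of size $\lq C_{q,p}|\rho|$ and is of degree $\lq2$ in $v$ — the remainders obey $\|E(v;\omega)\|\lq C_{q,p}R|\rho|$ and $|e(v;\omega)|\lq C_{q,p}R^2|\rho|$ on $\Omega$.

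Fix a small $c=c(q,p,n,R)>0$ and take $\delta,\eta,\eps$ to be suitably small multiples of $|\omega|$ (with constants depending on $q,p,n,R$). If $|(\alpha,\beta,\gamma)|\lq c|\omega|$, then $|(a,b)|\gq\tfrac12|\omega|$ once $\eps\lq c|\omega|$; since $\sum_{r=1}^p r^{-2}\gq1$ and $\sum_{r=1}^p r^{-4}\gq1$ this gives $|c_0(\omega)|\gq|(a,b)|\gq\tfrac12|\omega|$, hence on $\Omega$
\[
|\nabla\Phi_p(v;\omega)|\gq|c_0(\omega)|-\|H_0(\omega)\|R-C_{q,p}R^2|\rho|\gq\tfrac12|\omega|-C_{q,p}cR|\omega|-C_{q,p}R^2\eps\gq\delta
\]
for $c,\eps$ small enough, i.e.\ $\Omega_\delta=\Omega$. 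If instead $|(\alpha,\beta,\gamma)|>c|\omega|$, the claim is that $\sigma_n(H_0(\omega))\gtrsim_{q,p,n}|(\alpha,\beta,\gamma)|$; granting this, the Weyl perturbation bound for singular values yields $\sigma_n(\tD^2\Phi_p(v;\omega))\gq\sigma_n(H_0(\omega))-\|E(v;\omega)\|>\eta$ on $\Omega$ once $\eta,\eps$ are small, i.e.\ $G_{n,\eta}=\Omega$. (The single choice of $\delta,\eta,\eps$ is taken small enough that whichever alternative applies goes through.)

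It remains to bound $\sigma_n(H_0)$ from below. By homogeneity in $(\alpha,\beta,\gamma)$ and continuity of singular values over the compact unit sphere, it suffices to show $\rank H_0(\omega)\gq n$ whenever $(\alpha,\beta,\gamma)\neq0$. Grouping the $p\times p$ frequency blocks of \eqref{2nd_der_xx}--\eqref{2nd_der_xy} at $\rho=0$ puts $H_0$ in the form
\[
\begin{pmatrix}\wt\beta^{(1)}\otimes D_p^2+\gamma\otimes P+\gamma^\top\otimes P^\top & \alpha\otimes D_p\\ \alpha^\top\otimes D_p & \wt\beta^{(2)}\otimes D_p^2+(\gamma-\gamma^\top)\otimes Q\end{pmatrix},
\]
with $D_p=\diag(1,\tfrac12,\dots,\tfrac1p)$, $\wt\beta^{(i)}_{jk}=(1+\delta_{jk})\beta^{(i)}_{jk}$, and $P_{rs}=\tfrac{r}{s(r^2-s^2)}$, $Q_{rs}=\tfrac1{s^2-r^2}$ for $r\neq s$ (zero on the diagonal). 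Since $D_p$ is diagonal in the frequency index while $P,P^\top,Q$ vanish there, on any submatrix of $H^0_{xx}$ or $H^0_{yy}$ indexed by disjoint frequency sets only the $\gamma$-part survives. Three cases. If $\alpha\neq0$: the block $\alpha\otimes D_p$ is a submatrix of $H_0$ of rank $\rank(\alpha)\cdot p\gq2p\gq n$, a nonzero skew-symmetric matrix having even rank $\gq2$. If $\alpha=0$ and $\gamma\neq0$: pick disjoint $R,S\subset\{1,\dots,p\}$ with $|R|=|S|=\lfloor p/2\rfloor$; restricting $H^0_{yy}$ to rows of frequency in $R$ and columns of frequency in $S$ leaves $(\gamma-\gamma^\top)\otimes(Q|_{R\times S})$, and $Q|_{R\times S}$ is a Cauchy matrix with disjoint node sets $\{s^2:s\in S\}$, $\{r^2:r\in R\}$, hence nonsingular; this submatrix has rank $\rank(\gamma-\gamma^\top)\cdot\lfloor p/2\rfloor\gq2\lfloor p/2\rfloor\gq n$, the last step using that $n$ is even (so $n=p$ forces $p$ even). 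If $\alpha=\gamma=0$: then $(\beta^{(1)},\beta^{(2)})\neq0$, and one of $H^0_{xx}=\wt\beta^{(1)}\otimes D_p^2$, $H^0_{yy}=\wt\beta^{(2)}\otimes D_p^2$ has rank $\gq p\gq n$.

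I expect the work to be concentrated in this last step: extracting the clean tensor form of the $\rho$-free Hessian from \eqref{2nd_der_xx}--\eqref{2nd_der_xy} takes some care, and in the mixed regimes one must verify nonsingularity of the relevant frequency minors. If one wants the constant in $\sigma_n(H_0)\gtrsim_{q,p,n}|(\alpha,\beta,\gamma)|$ tracked explicitly rather than obtained by the compactness argument, one must select a good frequency index set and evaluate the resulting Cauchy/Vandermonde-type determinant, and it is there — together with the parity argument above — that the arithmetic hypotheses that $n$ be even and $n+1$ be prime are brought to bear. The remaining pieces (the perturbation bounds on $E$ and $e$, the elementary estimate $|c_0|\gq|(a,b)|$, and the bookkeeping of how small $\delta,\eta,\eps$ must be taken) are routine.
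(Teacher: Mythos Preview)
Your proof is correct and follows the same two-case split as the paper (either $(a,b)$ dominates, giving $\Omega_\delta=\Omega$, or $(\alpha,\beta,\gamma)$ dominates, giving $G_{n,\eta}=\Omega$), but your treatment of the second case is genuinely different and in one respect cleaner. The paper works block-by-block: it locates the largest entry of $(\alpha,\beta,\gamma)$, picks the corresponding $n\times n$ principal submatrix of a single block $H_{xx}(j,k)$ or $H_{yy}(j,k)$, and inverts it explicitly; in the $\gamma$-dominant case this forces it to show $\det S_n\neq0$ for the skew-symmetric matrix $S_n$ with entries $(s^2-r^2)^{-1}$ and zero diagonal, which is precisely where the primality of $n+1$ enters. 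You instead read off the full tensor structure of the $\rho$-free Hessian, reduce via compactness and homogeneity to a rank statement, and in the $\gamma$-case restrict to \emph{disjoint} frequency sets $R,S$ so that the relevant minor becomes a genuine Cauchy matrix $Q|_{R\times S}$ --- automatically nonsingular with no arithmetic condition on $n$. The evenness of $n$ is used (to handle $n=p$ with $p$ odd), but primality is not.

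Two remarks. First, your closing comment that the primality hypothesis is ``brought to bear'' if one wants explicit constants is not quite right: your Cauchy minor has an explicit product determinant, so your route gives trackable constants too, and primality is simply absent from your argument --- it is an artefact of the paper's choice of submatrix (one with zero diagonal) rather than something intrinsic to the problem. Second, the paper's explicit inversions give a concrete dependence of $\eps$ on $n$ (e.g.\ $\eps\lesssim|\omega|\text{\thorn}_n^{1/n}/n^2$), whereas your compactness constant is opaque; for the statement as written (constants allowed to depend on $q,p,n$) this is immaterial, but it would matter if downstream one needed the $n$-dependence quantified.
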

\begin{proof}
For $\eps\in(0,|\omega|/\sqrt{2})$ define $\bar\eps=\sqrt{|\omega|^2-\eps^2}\in(|\omega|/\sqrt{2},|\omega|)$, and assume $\diam(\Omega)=1$ w.l.o.g., otherwise replace $\eps$ with $\eps/(1\vee\diam(\Omega))$. First of all that $|\rho|\lq\eps$ implies that the vector $(\alpha,\beta,\gamma,a,b)$ has modulus no less than $\bar\eps$. The proof is divided into several cases depending on which components of this vector are dominant in modulus or norm.

Let us first consider the case where the coefficients $(a,b)$ are `dominant' in the sense that $|(a,b)|>\bar\eps\sqrt{1-\theta^2}\gq|\omega|/2$ for some $\theta\in(0,1/\sqrt{2})$ to be chosen later. In this case $|(\alpha,\beta,\gamma)|\lq\bar\eps\theta$. From the expression \eqref{1st_der_x} for the first derivatives one has the following bound:
\begin{equation}\label{1st_der_bound}
\lv\partial_{x_{jr}}\Phi_p(v;\omega)\rv^2\gq\frac{1}{r^2}a_j^2-\frac{2}{r}|a_j||Q_{x_{jr}}(v;\rho)|-2\lb\frac{1}{r}|a_j|+|Q_{x_{jr}}(v;\rho)|\rb|L_{x_{jr}}(v;\alpha,\beta,\gamma)|,
\end{equation}
where $L_{x_{jr}}(v;\alpha,\beta,\gamma)$ and $Q_{x_{jr}}(v;\rho)$ denote the linear and quadratic parts for $v$ in \eqref{1st_der_x}. Since $x$ and $y$ are bounded, one has that
\begin{equation}\label{1st_der_quadratic}
\sup_{v\in\Omega}|Q_{x_{jr}}(v;\rho)|\lq C_q|\rho|\frac{1}{r}\lb\sum_{s=1}^{p-r}\frac{1}{s}+\sum_{s=1}^{r-1}\frac{1}{s}\rb\lq C_q\frac{\eps}{r}\log p,
\end{equation}
and that, omitting the summation in $k$,
\begin{align}
\sup_{v\in\Omega}|L_{x_{jr}}(v;\alpha,\beta,\gamma)|\lq&\sup_{v\in\Omega}\lb\frac{1}{r}|\alpha_{jk}||y_{kr}|+\frac{2}{r^2}|\beta_{jk}^{(1)}||x_{kr}|+\frac{1}{r}(|\gamma_{jk}|+|\gamma_{kj}|)\sum_{s\neq r}\frac{1}{s}|x_{ks}|\rb\nonumber\\
\lq&C_q\frac{\log p}{r}\bar\eps\theta.\label{1st_der_linear}
\end{align}
Hence one derives that
\begin{equation*}
\inf_{v\in\Omega}|\partial_{x_{jr}}\Phi_p(v;\omega)|^2\gq\frac{1}{r^2}a_j^2-C_q\frac{\eps\log p}{r^2}|a_j|-C_q\bar\eps\theta\frac{\log p}{r}\lb\frac{1}{r}|a_j|+\frac{\eps}{r}\log p\rb,
\end{equation*}
and a similar inequality for $|\partial_{y_{jr}}\Phi_p|^2$ with $a_j/r$ replaced with $b_j/r^2$ as per \eqref{1st_der_y}. Thus, summing up $j$ and $r$ one has that
\begin{align}
\inf_{v\in\Omega}|\nabla\Phi_p(v;\omega)|^2\gq&C|(a,b)|^2-C_q|(a,b)|\eps\log p-C_q\bar\eps\theta(\log p)^2(|(a,b)|+\eps\log p)\nonumber\\
\gq&C|\omega|^2-C_q|\omega|\eps\log p-C_q|\omega|\theta(\log p)^2(|\omega|+\eps\log p),\label{grad_bound}
\end{align}
which has a fixed lower bound for $\eps<|\omega|(\log p)^{-1}$ and $\theta<(\log p)^{-2}$ sufficiently small. Then for small values of $\delta<C|\omega|$ we have $\Omega=\Omega_\delta$.

Now suppose that $|(a,b)|\lq\bar\eps\sqrt{1-\theta^2}$, then $|(\alpha,\beta,\gamma)|\gq\bar\eps\theta$. The latter corresponds to the constant terms in the second derivative $\tD^2\Phi_p(v;\omega)$. Write
\begin{equation}\label{2nd_der_mat}
\tD^2\Phi_p(v;\omega)=A_p+L_p(v;\rho)
\end{equation}
according to the expressions \eqref{2nd_der_xx}, \eqref{2nd_der_yy} and \eqref{2nd_der_xy}, where $A_p=A_p(\alpha,\beta,\gamma)$ and $L_p(v;\rho)=\{(L_{x_{jr}x_{ks}},L_{y_{jr}y_{ks}},L_{x_{jr}y_{ks}})(v;\rho)\}_{j,k,r,s}$ are the constant and linear parts in $v$, respectively. Then for each $(j,k)$ and $(r,s)$,
\begin{equation*}
\sup_{v\in\Omega}|L_{x_{jr}x_{ks}}(v;\rho)|\lq C_q|\rho|\lb\frac{1}{rs}+\frac{\delta_{rs}}{r|r-s|}+\frac{\delta_{rs}}{s|r-s|}\rb\lq C_q\frac{\eps}{r\wedge s},
\end{equation*}
and the same bound holds for $L_{y_{jr}y_{ks}}$ and $L_{x_{jr}y_{ks}}$. Let $H_n(v;\omega)=A_n+L_n(v;\rho)$ be an $n\times n$ submatrix of $\tD^2\Phi_p(v;\omega)$ with `constant' part $A_n=A_n(\alpha,\beta,\gamma)$ and linear part $L_n(v;\rho)$. Then the estimate above implies that
\begin{equation}\label{Ln_bound}
\sup_{v\in\Omega}\|L_n(v;\rho)\|\lq C_{q,p}\eps.
\end{equation}
Thus if $A_n$ is invertible, one can choose $\eps\lesssim_{q,p}\|A_n\|$ sufficiently small s.t. $H_n$ is also invertible for all $v\in\Omega$. Furthermore, choose $\eps<\|A_n^{-1}\|^{-1}$ small enough so that
\begin{equation}\label{Hn_lower_bound}
\|H_n^{-1}\|\lq\|A_n^{-1}\|\|(I+A_n^{-1}L_n)^{-1}\|\lq\|A_n^{-1}\|/(1-\|A_n^{-1}\|\|L_n\|)\lq2\eps^{-1},
\end{equation}
for all $v\in\Omega$. This follows from the fact that $\|(I+B)^{-1}\|=\|\sum_{k\gq0}(-B)^k\|\lq\sum_{k\gq0}\|B\|^k=1/(1-\|B\|)$ for any square matrix $B$ s.t. $\|B\|<1$. Henceforth by the singular-value decomposition the estimate \eqref{Hn_lower_bound} will imply that $H_n(v;\omega)$ - as an $n\times n$ submatrix of $\tD^2\Phi_p(v;\omega)$ - has singular values no less than $\eps/2$ for all $v\in\Omega$, in other words, $\Omega\setminus G_{n,\tau/2}=\varnothing$. In particular, for any $\eta\lq\eps/2$ we have $\Omega\setminus G_{n,\eta}=\varnothing$, too. Henceforth, one looks for an invertible $n\times n$ submatrix $A_n$ of $A_p$ with an appropriate bound for $\|A_n^{-1}\|$, and the result will follow by choosing sufficiently small values of $\eps$ and $\eta$.

Write $D_n=\diag(1,1/2,\cdots,1/n),~n\lq p$ for simplicity. If the component $\alpha$ is `dominant' amongst $\alpha,\beta,\gamma$ in the sense that, for example, $\|\alpha\|>\bar\eps\theta/\sqrt{3}>|\omega|\theta/\sqrt{6}$, choose the largest entry $|\alpha_{jk}|\gq c_q|\omega|\theta$. Then by \eqref{2nd_der_xy} the constant part of the $n$-th principle submatrix of the block $H_{xy}(j,k)$ is $A_n=\alpha_{jk}D_n$, and $\|A_n^{-1}\|\lq|\alpha_{jk}|^{-1}n$. Thus the result holds for $\eps\lesssim_{q,p}|\omega|\theta/n$.

On the other hand we need to consider the case where $|(\beta,\gamma)|\gq\bar\eps\theta\sqrt{2/3}$. If the largest entry of $(\beta,\gamma)$ is located on the diagonal, i.e. $|\beta_{jj}^{(i)}|\gq c_q\bar\eps\theta$ (recall that $\gamma_{jj}=0$) for $i=1$ or $2$ and some $j$, then the constant part of the $n$-th principle submatrix of the block $H_{xx}(j,j)$ or the block $H_{yy}(j,j)$ is $A_n^{(i)}=2\beta_{jj}^{(i)}D_n^2$ by \eqref{2nd_der_xx} and \eqref{2nd_der_yy}. Hence we have that $\|A_n^{-1}\|\lq|2\beta_{jj}^{(i)}|^{-1}n^2$ and we need $\eps\lesssim_{q,p}|\omega|\theta/n^2$.

The situation is trickier when the largest entry is found off the diagonal, i.e. for some pair $(j,k)$ (assuming $j<k$ w.l.o.g.). Consider the constant part $A_n^{(2)}$ of the $n$-th principle submatrix of the block $H_{yy}(j,k)$. By \eqref{2nd_der_yy} it takes the form
\begin{equation*}
A_n^{(2)}=\begin{pmatrix}
\beta_{jk}^{(2)} & \frac{1}{3}\gamma_{jk} & \frac{1}{8}\gamma_{jk} & \cdots & \frac{1}{n^2-1}\gamma_{jk} \\
-\frac{1}{3}\gamma_{jk} & \frac{1}{4}\beta_{jk}^{(2)} & \frac{1}{5}\gamma_{jk} & \cdots & \frac{1}{n^2-4}\gamma_{jk} \\
-\frac{1}{8}\gamma_{jk} & -\frac{1}{5}\gamma_{jk} & \frac{1}{9}\beta_{jk}^{(2)} & \cdots & \frac{1}{n^2-9}\gamma_{jk} \\
\vdots & \vdots & \vdots & \ddots & \vdots \\
-\frac{1}{n^2-1}\gamma_{jk} & -\frac{1}{n^2-4}\gamma_{jk} & -\frac{1}{n^2-9}\gamma_{jk} & \cdots & \frac{1}{n^2}\beta_{jk}^{(2)}
\end{pmatrix}=\beta_{jk}^{(2)}D_n^2+\gamma_{jk}S_n,
\end{equation*}
where $S_n$ is the skew-symmetric matrix with $(r,s)$-th entry $(s^2-r^2)^{-1},~r\neq s$ and $0$ on the diagonal. If $|\beta_{jk}^{(2)}|\gq c_q\bar\eps\theta$, then the matrix $A_n^{(2)}$ has full rank. To see this, notice that the matrix $\bar{S}_n:=D_n^{-1}S_nD_n^{-1}$ is also skew-symmetric and has purely imaginary eigenvalues only. Then all the eigenvalues of the scaled matrix $\bar{A}_n^{(2)}:=I+\gamma_{jk}\bar{S}_n/\beta_{jk}^{(2)}$ have real parts $1$, which serves as a lower bound for the operator norm of $\bar A_n^{(2)}$ as it is in fact a normal matrix, and so $\|(\bar{A}_n^{(2)})^{-1}\|\lq1$. Therefore $\|(A_n^{(2)})^{-1}\|=\|(\beta_{jk}^{(2)}D_n\bar{A}_n^{(2)}D_n)^{-1}\|\lq|\beta_{jk}^{(2)}|^{-1}n^2$ and again we need $\eps\lesssim_{q,p}|\omega|\theta/n^2$.

The same applies to the case where $|\beta_{jk}^{(1)}|\gq c_q\bar\eps\theta$: instead of $A_n^{(2)}$ consider the constant part $A_n^{(1)}$ of the $n$-th principle submatrix of the block $H_{xx}(j,k)$, which by \eqref{2nd_der_xx} takes the form $A_n^{(1)}=\beta_{jk}^{(1)}D_n^2+\gamma_{jk}S'_n$ where $S'_n$ is the matrix with $(r,s)$-th entry $(r^2-s^2)^{-1}r/s$.  Then it suffices to observe that $S'_n=-D_n^{-1}S_nD_n$ and $A_n^{(1)}=\beta_{jk}^{(1)}(I-\gamma_{jk}\bar{S}_n/\beta_{jk}^{(1)})D_n^2$.

Finally, if $|\gamma_{jk}|\gq c_q\bar\eps\theta$ is the largest entry of $(\beta,\gamma)$, we return to the matrix $A_n^{(2)}$. Since $S_n$ is skew-symmetric, $\det S_n=0$ for all odd $n$. If $n$ is even, by definition the determinant of $S_n$ is given by the expansion
\begin{equation*}
\det S_n=\sum_{\sigma\in\Pi_n^\ast}\sign(\sigma)\frac{1}{1-\sigma_1^2}\frac{1}{4-\sigma_2^2}\cdots\frac{1}{n^2-\sigma_n^2},
\end{equation*}
where $\Pi_n^\ast$ is the set of permutations of $(1,\cdots,n)$ with no fixed points. Notice that this summation includes the product of all the entries along the reflected diagonal $r+s=n+1$, each of which has denominator divisible by $n+1$. Clearly, out of all the permutations this product is the only term in the above expansion whose denominator is divisible by $(n+1)^n$ if $n+1$ is prime. Then it follows from the fundamental theorem of arithmetic that $\text{\thorn}_n:=\det S_n\neq0$. It is rather difficult to compute the the value $\text{\thorn}_n$ explicitly; computer results for large values of $n$ up to $400$ shows that it decays roughly exponentially. Notice that $A_n^{(2)} =D_n(I+\beta_{jk}^{(2)}\gamma_{jk}^{-1}\bar{S}_n^{-1})D_n^{-1}\gamma_{jk}S_n$ and that the matrix $\bar{S}_n^{-1}$ is still skew-symmetric, the same argument used in the previous cases still applies. Therefore $\|(A_n^{(2)})^{-1}\|\lq|\gamma_{jk}|^{-1}\|S_n^{-1}\|n$, and we need $\eps\lesssim_{q,p}|\omega|\theta\text{\thorn}_n^{1/n}/n$.

Combining all the criteria above, for an even integer $n$ s.t. $n+1$ is prime one can choose $\eps\lesssim_{q,p}|\omega|\text{\thorn}_n^{1/n}n^{-2}$ s.t. the result holds true for $\delta\lesssim_q|\omega|/4$ and $\eta<\eps/2$ sufficiently small.
\end{proof}

These lemmas altogether give an estimate for oscillatory integrals of the type
\begin{equation*}
T(R,\omega)=\int_\Omega e^{iR\Phi_p(v;\omega)}\varphi(v)\td v,
\end{equation*}
for a bounded domain $\Omega$ and a smooth amplitude $\varphi$ supported on $\Omega$. In order to study the global behaviour of it, in particular, the characteristic function $\psi_p(\xi)$ of $V_p$, some cut-off arguments will be needed to derive a similar estimate as in Lemma \ref{stationary_phase} on the whole space $\RR^{2qp}$.

But as the reader will realise later, to find a desired coupling for $V_p$ it is necessary to estimate oscillatory integrals with amplitudes other than just $\phi_p$. For a Schwartz function $\varphi\in\mathscr{S}(\RR^q)$ and $k,l\in\NN$, introduce the norm
\begin{equation*}
\|\varphi\|_{j,k}=\max_{|\theta|\lq j,|\tau|\lq k}\sup_{x\in\RR^q}|x^\theta\partial^\tau\varphi(x)|,
\end{equation*}
where $\theta,\tau\in\NN^q$ are multi-indices. Then for $\varphi\in C_0^\infty(\Omega)$ it holds that $|\varphi|_{k,\Omega}\simeq_q\|\varphi\|_{0,k}$.

\begin{theorem}\label{global_estimate}
For any $K>0$, let $p_0>8K^2$ be an even integer s.t. $[\sqrt{2p_0}/4]+1$ is a prime number and let $\varkappa=(2q+1/2)p_0+K+1$. For any $p\gq p_0,~\xi\in\RR^d,~\omega_0:=\xi/|\xi|$ and a Schwartz function $\varphi\in\mathscr{S}(\RR^{2qp})$, define
\begin{equation*}
I_p(\xi)=\int_{\RR^{2qp}}e^{i|\xi|\Phi_p(v;\omega_0)}\varphi(v)\td v,
\end{equation*}
and separate the phase function $\Phi_p$ in terms of distinct $v_0$-monomials:
\begin{equation}\label{separate_phase}
\Phi_p(v;\omega_0)=\sum_{|\beta|\lq3}v_0^{\beta}P_\beta(v'),
\end{equation}
where for each multi-index $\beta$ the polynomial $P_\beta$ has degree $3-|\beta|$. If $\varphi$ can be factorised as the product of two further Schwartz functions $\varphi_0\in\mathscr{S}(\RR^{2qp_0})$ and $\varphi_1\in\mathscr{S}(\RR^{2qp'}),~p':=p-p_0$, then $I_p\in C^\infty(\RR^d)$, and for any $k\in\NN$ and $|\xi|$ sufficiently large it holds that
\begin{equation}\label{DI_estimate}
|\tD^kI_p(\xi)|\lq C_{q,p_0,k,K}|\xi|^{-\frac{K}{16}}\|\varphi_0\|_{\varkappa+3k,K}\int_{\RR^{2qp'}}\lb1+\sum_{|\beta|\lq3}|P_\beta(v')|^{\sqrt{2p_0}+k-2}\rb\varphi_1(v')\td v',
\end{equation}
provided that the last integral is finite.
\end{theorem}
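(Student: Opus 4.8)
The plan rests on three moves: converting the $\xi$-derivatives into polynomial amplitude factors via the homogeneity of $\Phi_p$; separating the $v_0$- and $v'$-blocks to reduce to an oscillatory integral over $\RR^{2qp_0}$ with $v'$ a parameter; and estimating that integral by the stationary-phase machinery of Lemmas \ref{stationary_phase}--\ref{rho_small}, now applied with $p_0$ playing the role of $p$. First I would use that $\Phi_p(v;\omega)$ is linear in $\omega=(\alpha,\beta^{(1)},\beta^{(2)},\gamma,a,b,\rho)$ by \eqref{phase}, hence positively homogeneous of degree one, so $|\xi|\Phi_p(v;\omega_0)=\Phi_p(v;\xi)$, which is in turn \emph{linear} in $\xi$. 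Thus $I_p\in C^\infty$ by differentiation under the integral sign (the Schwartz decay of $\varphi$ dominates every polynomial in $v$), and since the second $\xi$-derivatives of $\Phi_p$ vanish, the $(j_1,\dots,j_k)$-component of $\tD^kI_p(\xi)$ equals $i^k\int_{\RR^{2qp}}e^{i\Phi_p(v;\xi)}\big(\prod_{m=1}^k W_{j_m}(v)\big)\varphi(v)\,\td v$ with $W_j(v):=\partial_{\xi_j}\Phi_p(v;\xi)$ a $\xi$-independent polynomial in $v$ of degree $\lq 3$; the amplitude is now a polynomial of degree $\lq 3k$ times $\varphi$. Writing $v=(v_0,v')\in\RR^{2qp_0}\times\RR^{2qp'}$, using $\varphi=\varphi_0\varphi_1$, the splitting \eqref{separate_phase}, and the analogous expansion $\prod_m W_{j_m}(v)=\sum_{|\sigma|\lq 3k}v_0^\sigma R_\sigma(v')$ ($R_\sigma$ polynomial in $v'$ of degree $\lq 3k-|\sigma|$), this becomes $i^k\sum_{|\sigma|\lq 3k}\int_{\RR^{2qp'}}\varphi_1(v')R_\sigma(v')J_\sigma(v';\xi)\,\td v'$, where
\[
J_\sigma(v';\xi):=\int_{\RR^{2qp_0}}e^{i|\xi|\Psi_{v'}(v_0)}v_0^\sigma\varphi_0(v_0)\,\td v_0,\qquad \Psi_{v'}(v_0):=\sum_{|\beta|\lq 3}v_0^\beta P_\beta(v').
\]
It then suffices to prove, uniformly in $v'$, a bound $|J_\sigma(v';\xi)|\lesssim_{q,p_0,k,K}|\xi|^{-K/16}\|\varphi_0\|_{\varkappa+3k,K}\big(1+\sum_{|\beta|\lq 3}|P_\beta(v')|\big)^{\sqrt{2p_0}+k-2}$; integrating against $\varphi_1R_\sigma$ and absorbing the polynomial $R_\sigma(v')$ into the $P_\beta$-powers (a degree count together with the finiteness hypothesis) then gives \eqref{DI_estimate}.

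To bound $J_\sigma$, fix $v'$ and regard $\Psi_{v'}$ as a cubic in $v_0$: its cubic part carries the $\Delta^{(p_0)}$-structure with coefficients the $\rho_0$-components of $\omega_0$, and its $v_0$-Hessian has the form $A(v')+L(v_0)$ as in \eqref{2nd_der_xx}--\eqref{2nd_der_xy} with $p_0$ in place of $p$. Decompose $\RR^{2qp_0}$ with a dyadic smooth partition of unity $\{\chi_i\}$ in $v_0$ ($|\tD^n\chi_i|\lesssim_n 2^{-in}$) and on each shell $\Omega=\supp\chi_i$ apply Lemma \ref{stationary_phase} with $R=|\xi|$, amplitude $v_0^\sigma\varphi_0\chi_i$, and $\Psi=\Psi_{v'}$: here $|\Psi_{v'}|_{K,\Omega}\lesssim 2^{3i}(1+\sum_\beta|P_\beta(v')|)$, the seminorm $|v_0^\sigma\varphi_0\chi_i|_{K,\Omega}$ is bounded by $\|\varphi_0\|_{\varkappa+3k,K}$ times a power of $2^{-i}$ (Schwartz decay), and the exceptional set $\Omega\setminus\Omega_\delta=\{v_0\in\Omega:|\nabla_{v_0}\Psi_{v'}|\lq\delta\}$ is handled by splitting on $|\rho_0|$. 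When $|\rho_0|>\eps$, Lemma \ref{rho_big} (applicable with the integer $n:=[\sqrt{2p_0}/4]$, which under the hypotheses on $p_0$ is even, has $n+1$ prime, and satisfies $n\gq K$) gives $\Lambda^{2qp_0}(\Omega\setminus G_{n,\eta})\lesssim\eps^{1-2n}\eta^n$; when $|\rho_0|\lq\eps$, Lemma \ref{rho_small} with the same $n$ forces $\Omega_\delta=\Omega$ or $G_{n,\eta}=\Omega$; in every case Lemma \ref{factorisation} applied to $f=\nabla_{v_0}\Psi_{v'}$ (whose derivative is the Hessian above, Lipschitz with constant $\lesssim|\rho_0|\lesssim 1$) yields $\Lambda^{2qp_0}(\Omega\setminus\Omega_\delta)\lesssim\eta^{-2n}\delta^n$, plus a $\xi$-free term $\eps^{1-2n}\eta^n$ in the first case. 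Balancing $2^{3iK}(1+\sum|P_\beta|)^K\delta^{-2K}|\xi|^{-K}$ against $\eta^{-2n}\delta^n$ in $\delta$ gives a gain $|\xi|^{-Kn/(2K+n)}$, at worst $|\xi|^{-K/3}$ since $n\gq K$; summing over $i$ (the dyadic factors $2^{O(i)}$ being beaten by the Schwartz decay, which is what fixes $\varkappa=(2q+\tfrac12)p_0+K+1$) and over $\sigma$, and tracking the exponent of $1+\sum_\beta|P_\beta(v')|$ (it enters through $|\Psi_{v'}|_{K,\Omega}$ and, in the second case, through $\eps\simeq_{q,p_0,n}|\omega_{v'}|$), one reaches the stated bound; $1/16$ is a conservative exponent below $1/3$, and $\sqrt{2p_0}+k-2$ a convenient upper bound for the power of $1+\sum|P_\beta|$ the optimisation generates.

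The main obstacle is the uniformity in $v'$ of this last step. Lemmas \ref{rho_big} and \ref{rho_small} are stated for $\Phi_p(\cdot;\omega)$ with $\omega$ of comparable size in all blocks, whereas the Hessian of $\Psi_{v'}$ has a constant part $A(v')$ that grows with $v'$ and whose $(\alpha,\beta,\gamma,a,b)$-block agrees with that of the unit vector $\omega_0$ only up to corrections of size $\lesssim|\rho_0|(1+|v'|^2)$; once these swamp the unit-vector part, Lemma \ref{rho_small} does not apply verbatim. One must therefore either let $\delta,\eta,\eps$ depend on $v'$ (so the lemma applies and the resulting measure bound carries exactly the power of $1+\sum_\beta|P_\beta(v')|$ allowed on the right-hand side), or isolate the small region where $\omega_{v'}$ degenerates and there bound $J_\sigma$ crudely by $\|v_0^\sigma\varphi_0\|_{L^1}$, checking consistency with \eqref{DI_estimate} in each case. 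A secondary, purely bookkeeping, point is to verify that the dyadic sum in $v_0$, the degree-$3k$ amplitude factors, and the $K$ derivatives demanded by Lemma \ref{stationary_phase} together cost no more than $\varkappa+3k$ powers of $v_0$ in the Schwartz seminorm of $\varphi_0$, as well as confirming the absorption of $R_\sigma(v')$ claimed at the end of the first paragraph.
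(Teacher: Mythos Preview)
Your plan is correct and follows essentially the same route as the paper: reduce $\tD^kI_p$ to oscillatory integrals with polynomial amplitudes, write $I_p$ as an outer $v'$-integral of an inner $v_0$-integral $J$, dyadically decompose in $v_0$, and on each shell apply Lemma~\ref{stationary_phase} together with Lemmas~\ref{factorisation}--\ref{rho_small} with $n=[\sqrt{2p_0}/4]$, letting $\eps,\delta,\eta$ depend on $v'$ exactly as you anticipate. The one point the paper makes explicit that you leave implicit is that the cross term $\Theta_p(v_0;v',\gamma,\rho)=\Phi_p-\Phi_{p_0}-\Upsilon_p$ is at most \emph{quadratic} in $v_0$: hence the Lipschitz constant of $\tD^2_{v_0}\Psi_{v'}$ equals that of $\tD^2\Phi_{p_0}$ (so Lemma~\ref{factorisation} applies with $v'$-independent constants), the extra constant rows added to $Q_m$ in Lemma~\ref{rho_big} only translate the sets $F_a$, and in Lemma~\ref{rho_small} one simply replaces $\theta$ by $\theta/(1+|\nabla_{v_0}\Theta_p|)$ and $\eps_0$ by $\eps_0/(1+|\Theta_p(\cdot;v')|_{2,B(0,2)}^2)$, which is precisely what produces the power $\sqrt{2p_0}-2$ of $1+\sum_\beta|P_\beta(v')|$; the paper's explicit choices $\delta_0=|\xi|^{-1/4}$, $\eta_0=\delta_0^{1/4}$ yield the exponent $1/16$ directly rather than by ``balancing''.
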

As we shall see later, this result is only useful when the integral in \eqref{DI_estimate} is indepdent of $p$.
\begin{proof}
Let us prove the rapid decay of $|I_p(\xi)|$ first. Instead of $I_p(\xi)$ let us consider for now the oscillatory integral
\begin{equation*}
I_{p_0}(\xi)=\int_{\RR^{2qp_0}}e^{i|\xi|\Phi_{p_0}(v_0;\omega_0)}\varphi_0(v_0)\td v_0
\end{equation*}
for a fixed positive integer $p_0$, and write $\omega_0=(a,b,\alpha,\beta,\gamma,\rho)\in\mathbb{S}^{d-1}$. First choose a non-negative, smooth cut-off function $\zeta_0\in C_0^\infty(B(0,2))$ s.t. $\zeta_0\equiv1$ on $B(0,1)$ and all its derivatives are bounded on $B(0,2)\setminus B(0,1)$. Divide the rest of $\RR^{2qp_0}$ by the annuli
\begin{equation*}
A_r:=\{v_0\in\RR^{2qp_0}:~2^{r-1}\lq|v_0|<2^r\},~r\in\NN,
\end{equation*}
and define the fattened annuli $A'_r:=\{2^{r-2}\lq|v_0|<2^{r+1}\}$. Choose another non-negative, smooth cut-off $\zeta_1\in C_0^\infty(A'_1)$ taking value $1$ on $A_1$ and bounded derivatives on $A'_1\setminus A_1$, and define $\zeta_r(v_0):=\zeta_1(2^{-r+1}v_0),~\forall r\gq2$. Then for each $r\gq1$, the smooth function $\zeta_r$ is supported on the fattened annulus $A'_r$ with value $1$ on $A_r$ and bounded derivatives on $A'_r\setminus A_r$; the sum $\sigma(v_0):=\sum_{r=0}^\infty\zeta_r(v_0)$ is then supported on the whole of $\RR^{2qp_0}$.

If one further sets $\wt{\zeta}_r(v_0):=\zeta_r(v_0)/\sigma(v_0)$, then each $\wt{\zeta}_r$ has the same properties as those of $\zeta_r$, and $\sum_{r=0}^\infty\wt{\zeta}_r\equiv1$ trivially. Then one can write
\begin{align*}
I_{p_0}(\xi)&=\int_{B(0,2)}e^{i|\xi|\Phi_{p_0}(v_0;\omega_0)}\varphi_0(v_0)\wt{\zeta}_0(v_0)\td v_0+\sum_{r=1}^\infty\int_{A'_r}e^{i|\xi|\Phi_{p_0}(v_0;\omega_0)}\varphi_0(v_0)\wt{\zeta}_r(v_0)\td v_0\\
&=:T_0(\xi)+\sum_{r=1}^\infty T_r(\xi),
\end{align*}
where the first integral can be readily estimated by the lemmas above. Since the function $\Phi_{p_0}(v_0;\omega_0)$ is a cubic polynomial and the vector $\omega_0=(a,b,\alpha,\beta,\gamma,\rho)$ is normalised, all the derivatives of $\Phi_{p_0}$ are uniformly bounded on $B(0,2)$; so do all the derivatives of $\wt\zeta_0$ by its construction.

Thus, applying Lemma \ref{stationary_phase} we have, $\forall K,\delta_0>0,~\xi\in\RR^d$,
\begin{equation*}
|T_0(\xi)|\lq C_{q,p_0,K}|\varphi_0|_{K,B(0,2)}\delta_0^{-2K}|\xi|^{-K}+2|\varphi_0|_{0,B(0,2)}\Lambda^{2qp_0}(\Gamma_{\delta_0}),
\end{equation*}
where $\Gamma_{\delta_0}=\{v_0\in B(0,2):|\nabla\Phi_{p_0}(v_0,\omega_0)|\lq\delta_0\}$. The set $\Gamma_{\delta_0}$ can be further split by the set $G_{n,\eta_0}=G_{n,\eta_0}(\nabla\Phi_{p_0})$ as defined in Lemma \ref{factorisation} and its complement for some $\eta_0>0$ and some integer $n$. Note that the Lipschitz constant of $\tD^2\Phi_{p_0}$ is at most $|\rho|\lq1$. Then by Lemma \ref{factorisation}, \ref{rho_big} and \ref{rho_small} one sees that for any $\delta_0,\eta_0,\eps_0>0$ sufficiently small and any even integer $n\lq\sqrt{2p_0}/4$ s.t. $n+1$ is prime, one has that
\begin{align}
\Lambda^{2qp_0}(\Gamma_{\delta_0})&\lq\Lambda^{2qp_0}(\Gamma_{\delta_0}\cap G_{n,\eta_0})+\Lambda^{2qp_0}(\Gamma_{\delta_0}\setminus G_{n,\eta_0})\nonumber\\
&\lesssim_{q,p_0,n}\eta_0^{-2n}\delta_0^n+\eps_0^{1-2n}\eta_0^n.\label{est_excep}
\end{align}
Thus, choosing $\eta_0=\delta_0^{1/4},~\delta_0=|\xi|^{-1/4}$ and $\eps_0\ll_{q,p_0}1$ one has that
\begin{align*}
|T_0(\xi)|\lq&C_{q,p_0,K}|\varphi_0|_{K,B(0,2)}|\xi|^{-\frac{1}{2}K}+C_{q,p_0,n}|\varphi_0|_{0,B(0,2)}\lb|\xi|^{-\frac{1}{8}n}+|\xi|^{-\frac{1}{16}n}\rb\\
\lq&C_{q,p_0,n,K}\|\varphi_0\|_{0,K}|\xi|^{-\frac{1}{16}K}
\end{align*}
for $|\xi|$ sufficiently large and $n>K$. Hence we choose $p_0>8K^2$ s.t. $[\sqrt{2p_0}/4]+1$ is prime and set $n=[\sqrt{2p_0}/4]$.

For each $r\gq1$, let $\omega_r:=(\rho,2^{-r}\alpha,2^{-r}\beta,2^{-r}\gamma,2^{-2r}a,2^{-2r}b)$ and consider the scaled phase function $\Phi_{p_0}(u_0;\omega_r)=2^{-3r}\Phi_{p_0}(2^ru_0;\omega_0)$ for all $u_0\in A'_0,~\omega_0\in\mathbb{S}^{d-1}$. This is again a cubic polynomial with bounded coefficients and so $|\Phi_{p_0}(\cdot;\omega_r)|_{K,A'_0}\lq C_{q,p_0,K}$ for any $K>0$. Scaling each annulus $A'_r$ down to $A'_0\subset B(0,2)$ one has that
\begin{equation*}
T_r(\xi)=\int_{A_0'}e^{i2^{3r}|\xi|\Phi_{p_0}(u_0;\omega_r)}\chi_r(u_0)\td u_0,
\end{equation*}
where $\chi_r(u_0)=2^{2qp_0r}\varphi_0(2^ru_0)\zeta_1(2u_0)/\sigma(2^ru_0)\in C_0^\infty(A'_0)$. Applying Lemma \ref{stationary_phase} again to this new expression of $T_r$ on $A'_0$ one sees that $\forall\delta_r,K>0$,
\begin{equation*}
|T_r(\xi)|\lq C_{q,p_0,K}|\chi_r|_{K,A'_0}\delta_r^{-2K}(2^{3r}|\xi|)^{-K}+2|\chi_r|_{0,A'_0}\Lambda^{2qp_0}(\wt\Gamma_{\delta_r}),
\end{equation*}
where $\wt\Gamma_{\delta_r}=\{v_0\in A'_0:|\nabla\Phi_{p_0}(v_0;\omega_r)|\lq\delta_r\}$. Splitting $\wt\Gamma_{\delta_r}$ according to the set $\wt G_{n,\eta_r}:=G_{n,\eta_r}(\nabla\Phi_{p_0}(\cdot;\omega_r))$ one obtains the estimate \eqref{est_excep} for the measure of the set $\wt\Gamma_{\delta_r}$ again from Lemma \ref{factorisation}, \ref{rho_big} and \ref{rho_small}. Note that here instead of unit frequency we have $1>|\omega_r|\gq2^{-2r}$, and so applying Lemma \ref{rho_small} w.r.t. $\omega_r$ one may choose $\delta_r=2^{-2r}\delta_0,\eta_r=2^{-2r}\eta_0,~\eps_r=2^{-2rn}\eps_0$, with the same values of $n$ and $p_0$, so that for $|\xi|$ sufficiently large,
\begin{equation*}
|\xi|^{K/16}|T_r(\xi)|\lesssim_{q,p_0,n,K}|\chi_r|_{K,A'_0}2^{rK}+|\chi_r|_{0,A'_0}2^{4rn(n-1)}\lesssim_{q,p_0,K}|\chi_r|_{K,A'_0}2^{rp_0/2}.
\end{equation*}
Notice that $|u_0|\gq1/4$ for any $u_0\in A'_0$, and so for any multi-indices $\theta,\tau\in\NN^{2qp_0}$ and $r\gq1$,
\begin{equation*}
2^{r|\theta|}\sup_{u_0\in A'_0}|\partial^\tau\varphi_0(2^ru_0)|\lq4^{|\theta|}\sup_{u_0\in A'_0}\lv(2^ru_0)^\theta\partial^\tau\varphi_0(2^ru_0)\rv\lq4^{|\theta|}\|\varphi_0\|_{|\theta|,|\tau|}.
\end{equation*}
Therefore for any $k,l\gq0$, differentiating $\chi_r$ up to $k$ times by Leibniz's rule one sees that $2^{rl}|\chi_r|_{k,A'_0}\lesssim_{q,p_0,k,l}\|\varphi_0\|_{2qp_0+l+k,k}$ from the boundedness of the derivatives of $\zeta_1$ and $\sigma$. This in turn implies that $|T_r(\xi)|\lesssim_{q,p_0,K}2^{-r}|\xi|^{-K/16}\|\varphi_0\|_{\varkappa,K}$, where $\varkappa=2qp_0+p_0/2+K+1$. Summing up in $r$ and one achieves the bound
\begin{equation*}
|I_{p_0}(\xi)|\lq C_{q,p_0,K}|\xi|^{-K/16}\|\varphi_0\|_{\varkappa,K}.
\end{equation*}
It remains to bound the original integral $I_p(\xi)$ in question for all $p\gq p_0$.

Write $v'_p:=\{(x_{jr},y_{ks}):~j,k=1,\cdots,q;~r,s=p_0+1,\cdots,p\}$, then conditional on $v'_p$ the integral $I_p(\xi)$ can be written as, by the factorisation assumption,
\begin{align}
I_p(\xi)&=\int_{\RR^{2qp'}}\varphi_1(v')\td v'\int_{\RR^{2qp_0}}e^{i|\xi|\Phi_p(v_0;v',\omega_0)}\varphi_0(v_0)\td v_0\nonumber\\
&=:\int_{\RR^{2qp'}}J_p(\xi,v')\varphi_1(v')\td v'.\label{I_p}
\end{align}
If one can show that $|J_p(\xi,v')|$ has a global decay in $|\xi|$ uniformly in $p$ and at most polynomial growth in $v'$, then such a decay should be passed on to $|I_p(\xi)|$ by the finite moments of $\varphi_1$. The idea is that for a fixed value of $v'$ (equivalently, conditional on the random variable $v'_p$) the oscillatory integral $J_p(\xi,v')$ has the same global behaviour in $\xi$ as $I_{p_0}(\xi)$.

Using the same cut-off arguments, it suffices to focus on the case where the amplitude $\varphi_0$ is compactly supported on $B(0,2)\subset\RR^{2qp_0}$. Conditional on $v_p'$ (fixing $v'$), Lemma \ref{stationary_phase} can be readily applied w.r.t. $v_0$ for any $K>0$ and the given choice of $\delta_0$,
\begin{equation}\label{J_bound}
|J_p(\xi,v')|\lq C_{q,p_0,K}|\varphi_0|_{K,B(0,2)}|\Phi_p(\cdot;v')|_{K,B(0,2)}^K\delta_0^{-2K}|\xi|^{-K}+2|\varphi_0|_{0,B(0,2)}\Lambda^{2qp_0}(\Gamma'_{\delta_0}),
\end{equation}
where $\Gamma'_{\delta_0}=\{v_0\in B(0,2):|\nabla\Phi_p(v_0;v')|\lq\delta_0\}$. To estimate the measure of the exceptional set $\Gamma'_{\delta_0}$, which may depend on $v'$, one divides it by the set $G'_{n,\eta_0}:=G_{n,\eta_0}(\nabla\Phi_p(\cdot;v'))$ just like before. The key is then to recognise the $v_0$-derivatives of $\Phi_p(v;\omega_0)$.

Separating the monomials that involve $v_0$ in the phase function $\Phi_p(v;\omega_0)$, we write
\begin{equation}\label{phase_p}
\Phi_p(v;\omega_0)=\Phi_{p_0}(v_0;\omega_0)+\Theta_p(v_0;v',\gamma,\rho)+\Upsilon_p(v';\omega_0),
\end{equation}
where the second term is given by, omitting the summation signs over the repeated indices $(j,k)$ and $(j,k,l)$ like before,
\begin{align*}
\Theta_p(v_0;v',\gamma,\rho)=&\gamma_{jk}\lb\sum_{\substack{r\lq p_0\\p_0<s\lq p}}+\sum_{\substack{s\lq p_0\\p_0<r\lq p}}\rb\frac{1}{r^2-s^2}\lb\frac{r}{s}x_{jr}x_{ks}-y_{jr}y_{ks}\rb\\
&+\rho_{jkl}\lb\sum_{\substack{r\lq p_0\\p_0<s\lq p-r}}+\sum_{\substack{s\lq p_0\\p_0<r\lq p-s}}+\sum_{\substack{r\lq p_0\\p_0-r<s\lq p_0}}\rb\\
&\qquad\quad\left\{-\frac{1}{r(r+s)}\left[(x_{jr}y_{ks}+y_{jr}x_{ks})x_{l,r+s}+(-x_{jr}x_{ks}+y_{jr}y_{ks})y_{l,r+s}\right]\right.\\
&\qquad\qquad+\frac{1}{rs}\left[(x_{jr}y_{ls}+y_{jr}x_{ls})x_{k,r+s}+(-x_{jr}x_{ls}+y_{jr}y_{ls})y_{k,r+s}\right]\\
&\qquad\qquad\left.+\frac{1}{s(r+s)}\left[(-x_{kr}y_{ls}+y_{kr}x_{ls})x_{j,r+s}+(x_{kr}x_{ls}+y_{kr}y_{ls})y_{j,r+s}\right]\right\},
\end{align*}
and $\Upsilon_p(v';\omega_0)$ is the sum of remaining monomials that do not involve $v_0$. Then it is clear that the function $\Phi_p(v;\omega_0)$ has the same derivatives in $v_0$ as the function
\begin{equation*}
\Psi_p(v_0;v',\omega_0):=\Phi_{p_0}(v_0;\omega_0)+\Theta_p(v_0;v',\gamma,\rho).
\end{equation*} 
An important observation is that the polynomial $\Theta_p(v_0;v',\gamma,\rho)$ is \textit{at most quadratic} in $v_0$. The summations over the indices $(r,s)$ are plotted in the shaded areas in Figure \ref{shade1} and \ref{shade2}, corresponding to $\nu^{(p)}-\nu^{(p_0)}$ and $\Delta^{(p)}-\Delta^{(p_0)}$, respectively.

\begin{figure}
\centering
\begin{subfigure}{0.45\textwidth}
\begin{tikzpicture}
\fill[fill=gray] (0,1)--(1,1)--(1,2.5)--(0,2.5);
\fill[fill=gray] (1,0)--(2.5,0)--(2.5,1)--(1,1);
\draw[->] (0,0)--(3,0);
\draw[->] (0,0)--(0,3);
\draw[dotted] (0,0)--(3,3);
\draw (0,1)--(3,1);
\draw (1,0)--(1,3);
\draw (0,2.5)--(3,2.5);
\draw (2.5,0)--(2.5,3);
\filldraw (3,0) node[anchor=west]{$r$};
\filldraw (0,3) node[anchor=south]{$s$};
\filldraw (1,0) node[anchor=north]{$p_0$};
\filldraw (0,1) node[anchor=east]{$p_0$};
\filldraw (2.5,0) node[anchor=north]{$p$};
\filldraw (0,2.5) node[anchor=east]{$p$};
\end{tikzpicture}
\caption{Grey: bilinear in $v_0$ and $v'$.}\label{shade1}
\end{subfigure}
\qquad
\begin{subfigure}{0.45\textwidth}
\begin{tikzpicture}
\fill[fill=gray] (0,1)--(0,2.5)--(1,1.5)--(1,1);
\fill[fill=gray] (1,0)--(2.5,0)--(1.5,1)--(1,1);
\fill[fill=black] (1,0)--(1,1)--(0,1);
\draw[->] (0,0)--(3,0);
\draw[->] (0,0)--(0,3);
\draw (0,1)--(3,1);
\draw (1,0)--(1,3);
\draw (0,1)--(1,0);
\draw (0,2.5)--(2.5,0);
\filldraw (3,0) node[anchor=west]{$r$};
\filldraw (0,3) node[anchor=south]{$s$};
\filldraw (1,0) node[anchor=north]{$p_0$};
\filldraw (0,1) node[anchor=east]{$p_0$};
\filldraw (2.5,0) node[anchor=north]{$p$};
\filldraw (0,2.5) node[anchor=east]{$p$};
\end{tikzpicture}
\caption{Grey: linear in $v_0$ and quadratic in $v'$. Black: quadratic in $v_0$ and linear in $v'$.}\label{shade2}
\end{subfigure}
\caption{Monomials of $\Theta_p(v_0;v',\gamma,\rho)$ in the shaded areas.}
\end{figure}
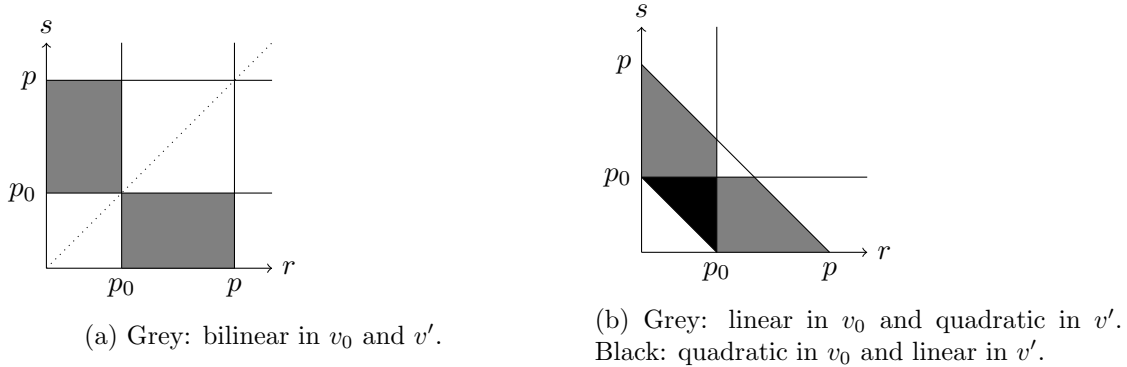

Considering the variable $v_0$ only, the Lipschitz constant of the linear function $\tD^2\Psi_p(v_0;v')$ is identical to that of $\tD^2\Phi_{p_0}(v_0)$ since they only differ by a \textit{constant} matrix $\tD^2\Theta_p(v_0;v',\gamma,\rho)$. Hence Lemma \ref{factorisation} applies directly and gives $\Lambda^{2qp_0}(\Gamma'_{\delta_0}\cap G'_{n,\eta_0})\lq C_{q,p_0}\eta_0^{-2n}\delta_0^n$ uniformly in $p$. This uniformity also holds when applying Lemma \ref{rho_big}: the difference here is that, in its proof, a constant vector $\theta_a(v')$ from the matrix $\tD^2\Theta_p$ is added to each row $q_a(y)$ of the submatrix $Q_m(y)$ therein, and the sets $F_a$ in \eqref{set_diff} are replaced by
\begin{equation*}
F'_a=\{(x,y):~\dist(q_a(y)+\theta_a(v'),~\spa\{q_b(y)+\theta_b(v'):~a\neq b=1,\cdots,n\})>\sqrt{n}\eta_0\}.
\end{equation*}
Then under the image of the same matrix $U_a$, each $F'_a$ is just a translated copy of $F_a$, whose Lebesgue measure remains unchanged. Therefore $\Lambda^{2qp_0}(\Gamma'_{\delta_0}\setminus G'_{n,\eta_0})\lq C_{q,p_0,n}(\eps'_0)^{1-2n}\eta_0^n$ for $|\rho|>\eps'_0$ for any $\eps'_0\in(0,1)$ by Lemma \ref{rho_big}.

However, in order to handle the case where $|\rho|\lq\eps'_0$ the choice of $\eps'_0$ will depend on $v'$, because previously the choice of $\eps_0$ was obtained by studying the specific form of the derivatives of the phase function $\Phi_{p_0}(v_0)$ in Lemma \ref{rho_small}, and now the new phase function $\Psi_p(v_0;v',\omega_0)$ does contain the extra parameter $v'$ in its $v_0$-derivatives. Thankfully, only a slight change (introducing the parameter $v'$) of the proof is needed for Lemma \ref{rho_small} to hold for $\Psi_p$.

For the case where the coefficients $(a,b)$ are dominant, since the first $v_0$-derivatives of $\Theta_p$ are only linear (in $v_0$), the $v_0$-quadratic part of $\nabla_{v_0}\Psi_p$ is the same as that of $\nabla\Phi_{p_0}(v_0;\omega_0)$, and so the estimate \eqref{1st_der_quadratic} still holds for $\eps=\eps_0$. For a fixed value of $v'$, the $v_0$-linear part of $\nabla\Psi_p(v_0;v',\omega_0)$ equals that of $\nabla\Phi_{p_0}(v_0;\omega_0)$ plus $\nabla\Theta_p(v_0;v',\gamma,\rho)$. One then sees that, instead of \eqref{1st_der_linear}, we have a bound $C_{q,p_0}(1+\theta'\sup_{v_0\in B(0,2)}|\nabla_{v_0}\Theta_p|)$ for the linear part, and we choose $\theta'=\theta/(1+\sup_{v_0\in B(0,2)}|\nabla_{v_0}\Theta_p|)$ with the original choice of $\theta$ in the proof.

In the other case where the coefficients $(\alpha,\beta,\gamma)$ are dominant, the constant matrix $A_{p_0}$ in \eqref{2nd_der_mat} is perturbed by the `constant' matrix $\tD^2\Theta_p(v_0;v',\gamma,\rho)$. Instead of looking for a non-singular $n\times n$ submatrix $H_n(v_0;\omega_0)$ of $\tD^2\Phi_{p_0}(v_0;\omega_0)$ one looks for such a submatrix $H_n(v_0;v',\omega_0)$ of $\tD^2_{v_0}\Psi_p(v_0,v';\omega_0)$. From the expression
\begin{equation*}
H_n(v_0;v',\omega_0)=A_{p_0}+\tD^2\Theta_p(v_0;v',\rho)+L_n(v_0;\rho)
\end{equation*}
with the linear part $L_n$ still satisfying \eqref{Ln_bound}, we arrive at the choice $\eps'_0=\eps_0\theta'/(1+|\tD^2_{v_0}\Theta_p|)\lq\eps_0/(1+|\Theta_p(\cdot;v',\gamma,\rho)|^2_{2,B(0,2)})$, as the rest of the arguments in the proof of Lemma \ref{rho_small} remain the same.

Therefore, recalling the choices for $\eps_0,\delta_0,\eta_0$ altogether we have that, for $|\xi|$ sufficiently large,
\begin{align*}
\Lambda^{2qp_0}(\Gamma'_{\delta'})\lesssim_{q,p_0}&\eta_0^{-2n}\delta_0^n+\eps_0^{1-2n}(1+|\Theta_p(\cdot;v',\gamma,\rho)|^2_{2,B(0,2)})^{2n-1}\eta_0^n\\
\lesssim_{q,p_0,n}&|\xi|^{-n/16}\lb1+|\Theta_p(\cdot;v',\gamma,\rho)|^{4n-2}_{2,B(0,2)}\rb.
\end{align*}
Returning to \eqref{J_bound}, with the same values of $n=[\sqrt{2p_0}/4]>K$ and $p_0$ as before we have that, for any $p\gq p_0$ and $\xi$ sufficiently large,
\begin{equation*}
|J_p(\xi,v')|\lesssim_{q,p_0,K}\|\varphi_0\|_{0,K}\lb1+|\Phi_p(\cdot;v',\omega_0)|_{K,B(0,2)}^K+|\Theta_p(\cdot;v',\gamma,\rho)|^{\sqrt{2p_0}-2}_{2,B(0,2)}\rb|\xi|^{-K/16}.
\end{equation*}
Now recalling the separation \eqref{separate_phase} of the $v_0$-monomials in $\Phi_p$ and the fact that the latter is a cubic polynomial, this bound is reduced to $|J_p(\xi,v')|\lesssim_{q,p_0,K}\|\varphi_0\|_{0,K}(1+\sum_\beta|P_\beta(v')|^{\sqrt{2p_0}-2})$. Thus, by the same scaling argument for a general Schwartz function $\varphi_0$ we have the above result with $\|\varphi_0\|_{0,K}$ replaced by the norm $\|\varphi_0\|_{\varkappa,K}$, and the desired bound for $|I_p(\xi)|$ follows from the relation \eqref{I_p}.

The function $I_p(\xi)$ is obviously smooth. Since $|\xi|\Phi_p(v;\omega_0)=\xi\cdot V_p(v)$, with a slight abuse of notation $V_p=V_p(v_p)$, we have, for any multi-index $\alpha\in\NN^d,~|\alpha|=k$, that
\begin{equation*}
\partial^\alpha I_p(\xi)=i^k\int_{\RR^{2qp}}e^{i|\xi|\Phi_p(v;\omega_0)}V_p^\alpha(v)\varphi(v)\td v.
\end{equation*}
Similar to the equality \eqref{separate_phase} separate the $v_0$-monomials in $V_p^\alpha(v)$, and one sees that the derivative $\partial^\alpha I_p(\xi)$ is a sum of oscillatory integrals of the same type as $I_p(\xi)$ itself (with Schwarz amplitudes). The result then follows from the observation that the number of terms in \eqref{separate_phase} depends only on $q,p_0,k$ and that
\begin{equation*}
\max_{|\theta|\lq j,|\sigma|\lq l,|\tau|\lq m}\sup_{v_0\in\RR^{2qp_0}}|v_0^{\theta}\partial^\tau(v_0^{\sigma}\varphi_0(v_0))|\lq\|\varphi_0\|_{j+l,m},
\end{equation*}
for any $j,l,m\in\NN$ and multi-indices $\theta,\sigma,\tau\in\NN^{2qp_0}$.
\end{proof}

As we will encounter standard Fourier integrals as well later on in the next section, perhaps it is a good time now to remark that much less restriction on the amplitude is needed to show the rapid decay for such special cases.

\begin{lemma}\label{fourier_bound}
For arbitrary $K\in\NN$ and $\epsilon>0$, let $h:\RR^d\times\RR^d\to\RR$ be sufficiently smooth w.r.t the first variable s.t. $\sup_z\|h(\cdot,z)\|_{d+\epsilon,K}<\infty$. Then it holds that $\forall z\in\RR^d$,
\begin{equation*}
\lv\int_{\RR^d}e^{\pm iz\cdot u}h(u,z)\td u\rv\lq C_{d,\epsilon}\sup_{z\in\RR^d}\|h(\cdot,z)\|_{d+\epsilon,K}|z|^{-K}.
\end{equation*}
\end{lemma}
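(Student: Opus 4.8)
The plan is to reduce the claim to the standard fact that the Fourier transform of an $L^1$ function with sufficiently many $L^1$ derivatives decays, turning the regularity hypothesis $\sup_z\|h(\cdot,z)\|_{d+\epsilon,K}<\infty$ into exactly the decay needed for integrability. Fix $z\in\RR^d$ and, without loss of generality, pick a coordinate $j$ with $|z_j|\gq|z|/\sqrt d$. The key step is integration by parts $K$ times in the $u_j$-variable: since $e^{\pm iz\cdot u}=(\pm iz_j)^{-1}\partial_{u_j}e^{\pm iz\cdot u}$, each integration by parts pulls out a factor $(iz_j)^{-1}$ and moves a $\partial_{u_j}$ onto $h(\cdot,z)$, with no boundary terms because $h(\cdot,z)$ is Schwartz-type in the first variable (all the relevant derivatives are controlled and decay). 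After $K$ steps,
\begin{equation*}
\lv\int_{\RR^d}e^{\pm iz\cdot u}h(u,z)\td u\rv=|z_j|^{-K}\lv\int_{\RR^d}e^{\pm iz\cdot u}\partial_{u_j}^Kh(u,z)\td u\rv\lq(\sqrt d)^K|z|^{-K}\int_{\RR^d}\lv\partial_{u_j}^Kh(u,z)\rv\td u.
\end{equation*}

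It then remains to bound the $L^1$-norm $\int_{\RR^d}|\partial_{u_j}^Kh(u,z)|\td u$ by $C_{d,\epsilon}\sup_z\|h(\cdot,z)\|_{d+\epsilon,K}$. This is where the index $d+\epsilon$ enters: writing $\langle u\rangle:=(1+|u|^2)^{1/2}$, one has $\int_{\RR^d}\langle u\rangle^{-(d+\epsilon)}\td u=:c_{d,\epsilon}<\infty$, so
\begin{equation*}
\int_{\RR^d}\lv\partial_{u_j}^Kh(u,z)\rv\td u\lq\lb\sup_{u}\langle u\rangle^{d+\epsilon}\lv\partial_{u_j}^Kh(u,z)\rv\rb\int_{\RR^d}\langle u\rangle^{-(d+\epsilon)}\td u\lq c_{d,\epsilon}\sup_{u}\langle u\rangle^{d+\epsilon}\lv\partial_{u_j}^Kh(u,z)\rv.
\end{equation*}
Since $\langle u\rangle^{d+\epsilon}\lq C_{d,\epsilon}(1+|u|^{\lceil d+\epsilon\rceil})$ and $|u|^{m}$ is bounded by a sum of monomials $|u^\theta|$ with $|\theta|=m$, the quantity $\sup_u\langle u\rangle^{d+\epsilon}|\partial_{u_j}^Kh(u,z)|$ is controlled by $C_{d,\epsilon}\max_{|\theta|\lq\lceil d+\epsilon\rceil,|\tau|\lq K}\sup_u|u^\theta\partial^\tau h(u,z)|$, which is at most $C_{d,\epsilon}\|h(\cdot,z)\|_{d+\epsilon,K}$ (here I read $\|\cdot\|_{d+\epsilon,K}$ as allowing polynomial weights up to degree $\lceil d+\epsilon\rceil$, consistent with the paper's convention that a non-integer first index is rounded up). Taking the supremum over $z$ gives the stated bound, with the $z$-dependence of $h$ harmless throughout since $z$ is merely a frozen parameter.

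There is no serious obstacle here; the lemma is the elementary ``non-stationary phase'' / repeated-integration-by-parts estimate, and the only mild point to be careful about is the bookkeeping that converts the weighted sup-norm $\|h(\cdot,z)\|_{d+\epsilon,K}$ into an $L^1$ bound on a $K$-th derivative — i.e. matching the minimal polynomial weight $d+\epsilon$ against the integrability threshold of $\langle u\rangle^{-(d+\epsilon)}$ on $\RR^d$. One should also note that the integration by parts is justified because the hypothesis guarantees $\partial_{u_j}^{K-1}h(\cdot,z)\to 0$ at infinity (it is dominated by $\langle u\rangle^{-(d+\epsilon)}$), so all boundary contributions vanish; this is the one place the decay assumption is genuinely used rather than just the smoothness.
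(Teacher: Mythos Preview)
Your proof is correct and follows essentially the same approach as the paper: integrate by parts $K$ times to extract the factor $|z|^{-K}$, then bound the remaining $L^1$-norm of $\partial^K h(\cdot,z)$ using the polynomial weight of order $d+\epsilon$. The only cosmetic difference is that the paper estimates the $L^1$-norm via a dyadic annular decomposition rather than your Japanese-bracket weight $\langle u\rangle^{-(d+\epsilon)}$, but these are interchangeable.
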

\begin{proof}
By assumption $h$ vanishes as $|u|\to\infty$. Then, for each fixed $z$ and any $\alpha\in\NN^d$ s.t. $|\alpha|=K$, by integration by parts we have that
\begin{equation*}
|z|^K\lv\int_{\RR^d}e^{\pm iz\cdot u}h(u;z)\td u\rv=\lv\int_{\RR^d}e^{\pm iz\cdot u}\partial_u^\alpha h(u,z)\td u\rv\lq\int_{\RR^d}|\partial_u^\alpha h(u,z)|\td u.
\end{equation*}
Rewrite the right-hand-side integral as a sum of integrals over the unit ball and the annuli $\{u:2^r\lq|u|<2^{r+1}\}_{r\gq0}$, and the result follows.
\end{proof}

Returning to Theorem \ref{global_estimate}, as a special case the characteristic function $\psi_p(\xi)$ of $V_p$ has Gaussian amplitude $\varphi(v)=\phi_p(v)$, which can be factorised as the product of $\varphi_0(v_0)=\phi_{p_0}(v_0)$ and $\varphi_1(v')=\phi_{p'}(v'):=\phi_p(v)/\phi_{p_0}(v_0)=\prod_{j=1}^q\prod_{r=p_0+1}^p\phi(x_{jr})\phi(y_{jr})$. As the reader will see later, in this case the integral in \eqref{DI_estimate} against $\varphi_1$ is not only finite but \textit{independent of} $p$ as well. This will follow from the moment estimates in the next section.

\section{Moment Estimates and Density Decay}

Recall the notations $d=2q^2+2q+(q^3-q)/3$ and $v_p=\{(x_{jr},y_{ks}):j,k=1,\cdots,q,r,s=1,\cdots,p\}\in\RR^{2qp}$. It is not quite clear yet how the method described in the introduction for the double integral can be applied to the triple integral case. In fact, the expression \eqref{different_g_h} no longer holds as $g$ is no longer the convolution of $f_p$ and the law of $\wt V_p$ - the latter is not independent of $V_p$. Instead, let $\kappa_y,\chi_y$ be the densities of $\wt V_p$ and $\bar V_p$ conditional on that $V_p=y$, respectively. Then one has that
\begin{equation*}
g(z)=\int_{\RR^d}f_p(z-w)\kappa_{z-w}(w)\td w,~h(z)=\int_{\RR^d}f_p(z-w)\chi_{z-w}(w)\td w,
\end{equation*}
and by \eqref{taylor_density}, for all $z\in\RR^d$ one arrives at
\begin{align*}
|g(z)-h(z)|\lq&C_{d,m}\sum_{|\beta|=0}^{m-1}\lv\int_{\RR^d}\lb w^\beta\kappa_{z-w}(w)-w^\beta\chi_{z-w}(w)\rb\td w\rv\\
&+C_{d,m}\sum_{|\beta|=m}\int_{\RR^d}\lv w^\beta\kappa_{z-w}(w)-w^\beta\chi_{z-w}(w)\rv\td w.
\end{align*}
One then sees the complication of estimating the integrands above, compared to the proof of Theorem 15 in \cite{davie2014Patappstodifequusicou}: in the double integral case, due to the independence between $U_p$ and $\wt U_p$ the first integral above will just be $\ex\wt U_p^\beta-\ex\bar{U}_p^\beta$, which vanishes by assumption, and the rest is of order $O(p^{-m/2})$ by Lemma \ref{moments} below (or Lemma 11 in \cite{davie2014Patappstodifequusicou}). However, here $\int_{\RR^d}w^\beta\kappa_{z-w}\td w$ is not even the conditional moment of $\wt V_p$ due to the appearance of $w$ in the subscript of $\kappa$. One may apply Taylor's theorem about $z$ in the subscript and impose certain smoothness condition on $\chi_a$, but whether $\kappa_a$ is smooth in $a$ is not clear.

Instead of this approach we follow a somewhat more primitive way of deriving the coupling bound via the Fourier inversion formula, for which we need much detailed moment estimates for $\wt V_p$.

\begin{lemma}\label{moments}
For fixed $p_0\in\ZZ^+$ and any $p,N\in \ZZ^+,~N>p\gq2p_0$, define the notation $v'_N:=\{(x_{jr},y_{jr}):j=1,\cdots,q;r=p_0+1,\cdots,N\}$ and let $\wt V_{p,N}=V_N-V_p=\wt V_p-\wt V_N$. Then for any $2\lq m\in\ZZ^+$ and $\alpha\in\NN^d$ s.t. $|\alpha|=m$, the following hold:
\begin{description}
\item[(i)] We have that $\wt V_{p,N}^\alpha=\sum_\beta v_{p_0}^\beta\wt P_\beta(v'_N)$, where each $\wt P_\beta$, depending on $p$ and $\alpha$, is a polynomial of degree at most $3m-|\beta|$ and the summation depends on $q,p_0,\alpha$ with $|\beta|\lq m$;
\item[(ii)] For each $\beta$ we have that $\ex|\wt P_\beta(v'_N)|\lesssim_{q,\alpha}p^{-m/2}$ uniformly in $N$.
\end{description}
\end{lemma}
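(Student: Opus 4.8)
The plan is to establish part (i) by an explicit combinatorial bookkeeping of the $v_0$-monomials in the power $\wt V_{p,N}^\alpha$, and part (ii) by a moment calculation that exploits the independence and mean-zero structure of the Gaussian coefficients $x_{jr},y_{ks}$.

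First I would prove (i). Recall that each component of $V_N-V_p$ is either a homogeneous linear, bilinear, or trilinear form in the Gaussian variables $\{(x_{jr},y_{jr})\}$ with $\max$-index exceeding $p$ in at least one slot (for $z,u$ the single index exceeds $p$; for $\lambda,\mu,\nu$ at least one of the two indices exceeds $p$; for $\Delta$ the largest index $r+s$ exceeds $p$). Split each variable in $v_N$ as $v_{p_0}$-part (indices $\le p_0$) plus $v'_N$-part (indices in $(p_0,N]$); since $p\ge 2p_0$, any monomial surviving in $V_N-V_p$ must involve \emph{at least one} factor from $v'_N$, so the purely-$v_{p_0}$ monomials cancel. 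Expanding the product of $m$ such forms and collecting terms by their $v_{p_0}$-degree $\beta$, each summand is $v_{p_0}^\beta$ times a polynomial in $v'_N$; since the total degree of $\wt V_{p,N}^\alpha$ as a polynomial in all variables is at most $3m$, the $v'_N$-degree of the coefficient of $v_{p_0}^\beta$ is at most $3m-|\beta|$. The bound $|\beta|\le m$ comes from the fact that each of the $m$ factors contributes at most one $v_{p_0}$-variable before a $v'_N$-variable is forced in (any factor entirely in $v_{p_0}$ would have been cancelled), so at most $m$ of the $\le 3m$ Gaussian slots carry a low index — this is the one place that requires care, since for $\Delta$ the index structure $(r,s,r+s)$ interlocks the three slots, and one must check that a term with two low indices among $(r,s,r+s)$ still forces the third to be high. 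The number of distinct $\beta$'s appearing is bounded in terms of $q,p_0,\alpha$ since the $v_{p_0}$-variables range over a fixed finite set.

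For (ii), fix $\beta$ and a monomial $v'_N{}^{\gamma}$ appearing in $\wt P_\beta$ with coefficient $c$. Taking the expectation $\ex|\wt P_\beta(v'_N)|$ and using the triangle inequality reduces matters to bounding $\ex|v'_N{}^\gamma|$ times the sum of $|c|$ over monomials; each $\ex|v'_N{}^\gamma|$ is a bounded Gaussian moment, so the whole estimate reduces to controlling $\sum|c|$, i.e. an $\ell^1$-type bound on the coefficients, uniform in $N$. Here one uses the structure: each of the $m$ factors of $\wt V_{p,N}$ is a sum over its index set with $\ell^1$-norm of coefficients $\lesssim p^{-1/2}$ — for $z$ this is $\sum_{r>p} r^{-1}$, hmm that diverges, so instead one must use the $\ell^2$-bound $\big(\sum_{r>p}r^{-2}\big)^{1/2}\lesssim p^{-1/2}$ together with orthogonality of the distinct Gaussians, i.e. $\ex(\wt z_j)^2 = \sum_{r>p}r^{-2}\lesssim p^{-1}$, and similarly $\ex|\wt\lambda_{jk}|^2,\ex|\wt\mu_{jk}|^2,\ex|\wt\nu_{jk}|^2,\ex|\wt\Delta_{jkl}|^2\lesssim p^{-1}$ (these are the genuine content, proved by expanding the square and summing the tail series; for $\nu$ and $\Delta$ one checks the double sums $\sum (r^2-s^2)^{-2}$ and $\sum (r(r+s))^{-2}$ over the relevant tail regions converge like $p^{-1}$). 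So the cleanest route is: write $\wt V_{p,N}^\alpha$ directly as a product of $m$ centred tail sums, apply the generalised Hölder inequality $\ex\prod_{i=1}^m |Y_i| \le \prod_i (\ex|Y_i|^m)^{1/m}$, and use hypercontractivity of Gaussian chaos (each $Y_i$ lives in a fixed Wiener chaos of order $\le 3$, so $\ex|Y_i|^m\lesssim_m (\ex Y_i^2)^{m/2}\lesssim_m p^{-m/2}$); this gives $\ex|\wt V_{p,N}^\alpha|\lesssim_{q,\alpha} p^{-m/2}$ uniformly in $N$. Finally, to pass from $\ex|\wt V_{p,N}^\alpha|$ to $\ex|\wt P_\beta(v'_N)|$ for the individual $\beta$-components, integrate out the $v_{p_0}$-variables: multiplying $\wt V_{p,N}^\alpha$ by a suitable product of Hermite polynomials in $v_{p_0}$ and taking expectation projects onto a given $v_{p_0}^\beta$, and $\ex|\wt P_\beta(v'_N)| = \ex_{v'_N}|\wt P_\beta| \le \ex_{v_{p_0}}\ex_{v'_N}|\cdots|$ after a Cauchy–Schwarz against the fixed Gaussian weight, picking up only a $q,p_0,\alpha$-dependent constant.

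The main obstacle is part (i) — specifically, verifying that the $v_{p_0}$-degree of every surviving monomial is genuinely at most $m$ (not $3m$), because the cancellation of the low-index-only monomials is what forces at least one high index per factor, and this interacts delicately with the $(r,s,r+s)$ index coupling in $\Delta$ and with the cross terms in $\nu_{jk}+\nu_{kj}=z_jz_k-\mu_{jk}^{(1)}$; once (i) is set up correctly, part (ii) is a routine application of Gaussian hypercontractivity plus the elementary tail-series estimates $\sum_{r>p}r^{-2}\asymp p^{-1}$.
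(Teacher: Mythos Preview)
Your treatment of part (i) matches the paper's: both observe that among the six component types only $\wt\nu^{(p,N)}$ and $\wt\Delta^{(p,N)}$ touch $v_{p_0}$, and that the constraint $p\gq 2p_0$ forces each of these factors to be at most linear in $v_{p_0}$, giving $|\beta|\lq|\alpha_5|+|\alpha_6|\lq m$. Your hesitation about the $(r,s,r+s)$ coupling is unwarranted: since $r+s>p\gq 2p_0$, at most one of $r,s$ can lie in $[1,p_0]$, and $r+s$ itself is always $>p_0$, so no surviving monomial ever carries two low indices. The identity $\nu_{jk}+\nu_{kj}=z_jz_k-\mu_{jk}^{(1)}$ is irrelevant here; it is used earlier in the paper only to restrict attention to $j<k$.

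For part (ii) you take a genuinely different and considerably shorter route than the paper. The paper separates $\wt V_{p,N}^\alpha$ by Young's inequality into $m$-th moments of the individual components and computes these by hand: Rosenthal's inequality handles the independent-summand components $z,u,\lambda,\mu$, while for $\nu$ and $\Delta$ it performs a lengthy case analysis --- splitting the index region into pieces $T_1,\dots,T_4$ and $S_1,S_2,S_3$, matching indices in pairs, and in the delicate case $j=k$ of $\Delta$ (where $x_{jr}$ and $x_{j,r+s}$ live in the same family) invoking an equivalence-class counting argument on the $2m$-tuple $(r_1,\dots,r_m,t_1,\dots,t_m)$. Your approach --- generalised H\"older plus Gaussian hypercontractivity to reduce everything to second moments, followed by a projection in $v_{p_0}$ to isolate each $\wt P_\beta$ --- is correct and bypasses all of that combinatorics, at the price of invoking the Nelson estimate as a black box. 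The second-moment bounds you need are indeed routine tail sums. Your projection step is somewhat garbled as written but can be made rigorous cleanly: either expand $\sum_\beta v_{p_0}^\beta\,\wt P_\beta$ in the Hermite basis of $L^2(\gamma_{v_{p_0}})$ and use orthogonality to read off $\|\wt Q_\gamma\|_2\lq\|\wt V_{p,N}^\alpha\|_2/\|H_\gamma\|_2\lesssim p^{-m/2}$, then invert the finite change of basis from Hermite back to monomials; or simply interpolate at finitely many deterministic values of $v_{p_0}$ and apply the hypercontractive bound to each evaluation.
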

\begin{proof}
One can write $\alpha=(\alpha_1,\alpha_2,\alpha_3,\alpha_4,\alpha_5,\alpha_6)$ s.t. $\sum_{i=1}^6|\alpha_i|=m$ and
\begin{equation}\label{power_split}
\wt V_{p,N}^\alpha=\lb\wt z^{(p,N)}\rb^{\alpha_1}\lb\wt u^{(p,N)}\rb^{\alpha_2}\lb\wt\lambda^{(p,N)}\rb^{\alpha_3} \lb\wt\mu^{(p,N)}\rb^{\alpha_4}\lb\wt\nu^{(p,N)}\rb^{\alpha_5} \lb\wt\Delta^{(p,N)}\rb^{\alpha_6},
\end{equation}
where the terms on the right-hand side are similarly defined as the components of $\wt V_p$ and each multi-index $\alpha_i$ is of corresponding dimension. It is then easier to work with powers of each component.

Clearly the contribution of $v_{p_0}$ comes from $\wt\nu^{(p,N)}$ and $\wt\Delta^{(p,N)}$ only, as the rest are all independent of $v_p$ (and thus of $v_{p_0}$). For each admissible $j,k,l$, the truncated sum $\wt\nu_{jk}^{(p,N)}$ of $\wt\nu_{jk}^{(p)}$ over $p<r\vee s\lq N,~r\neq s$ is at most linear in $v_{p_0}$, and the truncated sum $\wt\Delta_{jkl}^{(p,N)}$ of $\wt\Delta_{jkl}^{(p)}$ over $p<r+s\lq N$ is also at most linear in $v_{p_0}$ as the assumption $p\gq2p_0$ implies that at least one of $r$ and $s$ must be greater than $p_0$. Thus by multiplying out the powers in \eqref{power_split} and re-grouping the monomials involving $v_{p_0}$ one see that the power $\wt V_{p,N}^\alpha$ has degree at most $|\alpha_5|+|\alpha_6|$ in $v_{p_0}$, and the representation (i) follows.

To give a bound for each $\ex|\wt P_\beta(v'_N)|=\ex(|\wt P_\beta(v'_N)||v_{p_0})$, observe that, as per \eqref{power_split}, each $\wt P_\beta(v'_N)$ is a mixed product of the random variables $\wt z^{(p,N)},\wt u^{(p,N)},\wt\lambda^{(p,N)},\wt\mu^{(p,N)},\wt\nu^{(p,N)},\wt\Delta^{(p,N)}$. Separating them by Young's inequality, it suffices to bound the $m$-th moment of each component of $\wt V_{p,N}$. In particular, it suffices to bound the $m$-th conditional (on $v_{p_0}$) moments of $\wt\nu^{(p,N)}$ and $\wt\Delta^{(p,N)}$, since one can then evaluate $v_{p_0}=(1,\cdots,1)$, for example.

It is easy to estimate the moments of the random variables $\wt z^{(p,N)},\wt u^{(p,N)},\wt\lambda^{(p,N)},\wt\mu^{(p,N)}$, since they are all sums of independent random variables. For $\wt u^{(p,N)}=\wt u^{(p)}-\wt u^{(N)}$, each component $\wt u_j^{(p,N)}$ follows $\N(0,\sum_{r=p+1}^Nr^{-4})$ and one sees that $\ex|\wt u^{(p,N)}|^m\lq C_{q,m}p^{-3m/2}$. For $\wt\mu^{(p,N)}$, consider $\wt\mu_{jk}^{(1,p,N)}:=\wt\mu_{jk}^{(1,p)}-\wt\mu_{jk}^{(1,N)}$ for instance: by Rosenthal's inequality (see Theorem 3 in \cite{rosenthal1970SubLppSpabySeqIndRanVar} or Theorem 2.1 in \cite{deacosta1981IneB-vRanVecApptoStrLawLarNum}), for any $N>p$,
\begin{align*}
\ex\lv\sum_{r=p+1}^N\frac{1}{r^2}x_{jr}x_{kr}\rv^m\lesssim_m&\sum_{r=p+1}^N\frac{1}{r^{2m}}\ex|x_{jr}|^m\ex|x_{kr}|^m+\lb\sum_{r=p+1}^N\frac{1}{r^4}\ex|x_{jr}|^2\ex|x_{kr}|^2\rb^{m/2}\\
\lesssim_m&(p+1)^{1-2m}+(p+1)^{-3m/2}.
\end{align*}
Obviously the same bound holds for the $m$-th moment of $\wt{\mu}_{jk}^{(2,p,N)}$, too. One also sees a bound $C_{q,m}p^{-m/2}$ for the $m$-th moments of $\wt z^{(p,N)}$ and $\wt\lambda^{(p,N)}$ for the same reason, but this is in fact implied by part (2) of Lemma 11 in \cite{davie2014Patappstodifequusicou} where a stronger estimate is given. 

It is much less straightforward to compute the conditional moments of $\wt\nu^{(p,N)}$ and $\wt\Delta^{(p,N)}$ as they are not sums of independent random variables. For the rest of the proof use the shorthand notation $\ex_{p_0}:=\ex(\cdot|v_{p_0})$. For each pair $(j,k)$ write $\wt\nu_{jk}^{(p,N)}=A_{jk}-B_{jk}$ where $A_{jk}$ is the corresponding sum of $(r^2-s^2)^{-1}rs^{-1}x_{jr}x_{ks}$ and $B_{jk}$ of $(r^2-s^2)^{-1}y_{jr}y_{ks}$. One can further write (see Figure \ref{split1} below)
\begin{equation*}
A_{jk}=\lb\sum_{\substack{s\lq p_0\\p<r\lq N}}+\sum_{\substack{r\lq p_0\\p<s\lq N}}+\sum_{\substack{p_0<s<r\\p<r\lq N}}+\sum_{\substack{p_0<r<s\\p<s\lq N}}\rb\frac{1}{r^2-s^2}\frac{r}{s}x_{jr}x_{ks}=:T_1-T_2+T_3-T_4,
\end{equation*}
and $B_{jk}$ can be similarly split into four smaller sums. Hence it suffices to bound the $m$-th conditional moment of each of those smaller sums. Moreover, it suffices to consider the case where $m$ is even, as the odd moments can be derived from the Cauchy-Schwartz inequality. 

Multiplying out the power one obtains that
\begin{equation*}
T_1^m=\sum_{\substack{s.\lq p_0\\p<r.\lq N}}\frac{1}{(r_1^2-s_1^2)\cdots(r_m^2-s_m^2)}\frac{r_1\cdots r_m}{s_1\cdots s_m}x_{jr_1}\cdots x_{jr_m}x_{ks_1}\cdots x_{ks_m}
\end{equation*}
and similar expressions for $T_2^m,T_3^m$ and $T_4^m$, where the summations are in $s_\alpha,r_\alpha$ accordingly for all $\alpha=1,\cdots,m$. Note that the random variables $x_{jr_1},\cdots,x_{jr_m}$ are independent of $x_{ks_1},\cdots,x_{ks_m}$ as $j<k$. For the conditional expectation not to vanish, the indices $r_1,\cdots,r_m$ must match in pairs; meanwhile since $p\gq2p_0$, one has that $r_\alpha>2s_\alpha$ and $r_\alpha/(r_\alpha-s_\alpha)\lq2$ for each $\alpha$. Thus
\begin{equation*}
\ex_{p_0}T_1^m\lesssim_m\lb\sum_{p<r\lq N}\frac{1}{r^2}\rb^{m/2}\lb\sum_{s\lq p_0}\frac{1}{s}x_{ks}\rb^m,
\end{equation*}
and by symmetry $\ex_{p_0}T_2^m$ has the same bound with $k$ replaced by $j$.

As for $T_3$ the indices $s_1,\cdots,s_m$ must also match in pairs. If $r_\alpha>2s_\alpha$ then one immediately obtains a bound $C_mp^{-m/2}$; if $r_\alpha\lq2s_\alpha$, then the corresponding expected sum is bounded by (up to the number of matchings)
\begin{equation*}
\lb\sum_{r>p}\sum_{s<r}\frac{1}{(r-s)^2s^2}\rb^{m/2}\lesssim_m\lb\sum_{r>p}\frac{1}{r^2}\sum_{s<r/2}\frac{1}{s^2}\rb^{m/2}\lesssim_mp^{-m/2}.
\end{equation*}
Thus $\ex_{p_0}T_3^m\lq C_mp^{m/2}$, and the same holds for $\ex_{p_0}T_4^m$ by symmetry. So altogether we have that $\ex_{p_0}|A_{jk}|^m\lq C_m(1+(\sum_{r\lq p_0}r^{-1}|x_{jr}|)^m+(\sum_{r\lq p_0}r^{-1}|x_{kr}|)^m)p^{-m/2}$, and it is easy to see that the same bound with $x$ replaced by $y$ holds for $\ex_{p_0}|B_{jk}|^m$.

\begin{figure}
\centering
\begin{subfigure}{0.4\textwidth}
\begin{tikzpicture}
\fill[fill=gray] (0,0)--(2,0)--(2,2)--(0,2);
\draw[->] (0,0)--(4,0);
\draw[->] (0,0)--(0,4);
\draw (0,1)--(4,1);
\draw (1,0)--(1,4);
\draw (0,2)--(2,2);
\draw (2,0)--(2,2);
\draw (2,2)--(4,4);
\filldraw (4,0) node[anchor=west]{$r$};
\filldraw (0,4) node[anchor=south]{$s$};
\filldraw (1,0) node[anchor=north]{$p_0$};
\filldraw (0,1) node[anchor=east]{$p_0$};
\filldraw (2,0) node[anchor=north]{$p$};
\filldraw (0,2) node[anchor=east]{$p$};
\filldraw (3,0.5) node{$T_1$};
\filldraw (0.5,3) node{$T_2$};
\filldraw (3,2) node{$T_3$};
\filldraw (2,3) node{$T_4$};
\end{tikzpicture}
\caption{$\nu_{jk}$}\label{split1}
\end{subfigure}
\begin{subfigure}{0.4\textwidth}
\begin{tikzpicture}
\fill[fill=gray] (0,0)--(2,0)--(0,2);
\draw[->] (0,0)--(4,0);
\draw[->] (0,0)--(0,4);
\draw[dotted] (0,0.5)--(4,0.5);
\draw[dotted] (0.5,0)--(0.5,4);
\draw (0,2)--(2,2);
\draw (2,0)--(2,4);
\draw (2,0)--(0,2);
\filldraw (4,0) node[anchor=west]{$r$};
\filldraw (0,4) node[anchor=south]{$s$};
\filldraw (0.5,0) node[anchor=north]{$p_0$};
\filldraw (0,0.5) node[anchor=east]{$p_0$};
\filldraw (2,0) node[anchor=north]{$p$};
\filldraw (0,2) node[anchor=east]{$p$};
\filldraw (1.5,1.5) node{$S_1$};
\filldraw (1,3) node{$S_2$};
\filldraw (3,2) node{$S_3$};
\end{tikzpicture}
\caption{$\Delta_{jkl}$}\label{split2}
\end{subfigure}
\caption{Splitting of the summations outwith the grey areas}
\end{figure}
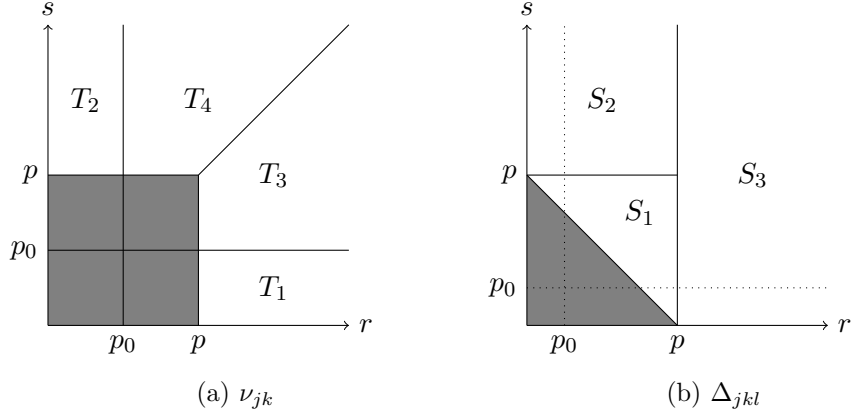

It remains to estimate the conditional moments of $\wt\Delta_{jkl}^{(p,N)}$ for each Lyndon word $(j,k,l)$. Take, for instance, the sum
\begin{align*}
\Sigma_{jkl}:=\sum_{p<r+s\lq N}\frac{1}{rs}x_{jr}y_{ls}x_{k,r+s},
\end{align*}
for a certain Lyndon word $(j,k,l)$. Write $\Sigma_{jkl}=S_1+S_2+S_3$ where, as is illustrated by Figure \ref{split2}, $S_1$ is the sum over $p-s<r\lq p,s\lq p$, $S_2$ is the sum over $p<s\lq N-r,~r\lq p$, and $S_3$ is the sum over $p<r\lq N-s,~s<N-p$. Further write $t=r+s$ for simplicity. Then the $m$-th power of $\Sigma_{jkl}$ can be expressed as
\begin{equation*}
\Sigma_{jkl}^m=\sum_{s.\lq N}\frac{1}{s_1\cdots s_m}y_{ls_1}\cdots y_{ls_m}\lb\sum_{\substack{r.\lq N\\p<t.\lq N}}\frac{1}{r_1\cdots r_m}x_{jr_1}\cdots x_{jr_m}x_{kt_1}\cdots x_{kt_m}\rb,
\end{equation*}
and $S_1,S_2$ and $S_3$ can also be written in this form. We also write $r.$ as the $m$-tuple $(r_1,\cdots,r_m)$ and $s.,t.$ likewise. Denote by $\Pi_m$ the set of all pair-matching patterns for an $m$-tuple.

Notice that in $S_1$ the random variables $x_{jr_1},\cdots,x_{jr_m}$ are all independent of $x_{kt_1},\cdots,x_{kt_m}$. For the conditional expectation not to vanish, the indices $t.$ must match in pairs. Thus
\begin{equation*}
\ex_{p_0}S_1^m=\sum_{s.\lq p}\frac{1}{s_1\cdots s_m}\ex_{p_0}y_{ls_1}\cdots y_{ls_m}\lb\sum_{t.\in \Pi_m}C_{t.}\sum_{r.}\frac{1}{r_1\cdots r_m}\ex_{p_0}x_{jr_1}\cdots x_{jr_m}\rb,
\end{equation*}
where the last summation is over $p-s<r_\alpha\lq p$ for all $\alpha=1,\cdots,m$ subject to a fixed pair-matching pattern of $t.$, and the constant $C_{t.}$ is the product of the corresponding even moments of $x_{kt_\alpha}$. Note that the indices $r.$ and $s.$ cannot be both less than or equal to $p_0$ as $p\gq2p_0$, so they must match in pairs, respectively, too. Hence $\ex_{p_0}S_1^m$ is a polynomial in $x_{j1},\cdots,x_{jp_0},y_{l1},\cdots,y_{lp_0}$ of degree $m$. Distributing out the summation above and using the restriction $p-p_0\gq p/2$ one sees that
\begin{align*}
\ex_{p_0}S_1^m\lesssim_m&\lb\sum_{\substack{p-s<r\lq p\\s\lq p_0}}\frac{1}{s^2r^2}y_{ls}^2\rb^{m/2}+\lb\sum_{\substack{p-s<r\lq p_0\\p-p_0<s\lq p}}\frac{1}{s^2r^2}x_{jr}^2\rb^{m/2}+\lb\sum_{\substack{(p-s)\vee p_0<r\lq p\\p_0<s\lq p}}\frac{1}{s^2r^2}\rb^{m/2}\\
\lesssim_m&\lb\frac{1}{p}\sum_{s\lq p_0}\frac{1}{s^2}(x_{js}^2+y_{ls}^2)\rb^{m/2}+\lb\frac{1}{p}\sum_{s\lq p}\frac{1}{s(p-s+1)}\rb^{m/2}.
\end{align*}
The last summation is bounded by $2\sum_{s\lq p/2}s^{-2}$, and therefore $\ex_{p_0}S_1^m\lq C_m(1+|v_{p_0}|^m)p^{-m/2}$.

For $S_2$, the random variables $x_{jr.}$ and $x_{kt.}$ are still independent as $r.\lq p$. In addition to $t.$, under conditional expectation the indices $s.$ must match in pairs. This means that the indices $r.$ must also match in pairs, and hence
\begin{equation*}
\ex_{p_0}S_2^m=\lb\sum_{p<s.\lq N}\frac{C_{s.}}{s_1^2\cdots s_{m/2}^2}\rb\lb\sum_{r.\lq p_0}\frac{C_{t.}}{r_1^2\cdots r_{m/2}^2}x_{jr_1}^2\cdots x_{jr_{m/2}}^2+\sum_{p<r.\lq N}\frac{C_{t.,r.}}{r_1^2\cdots r_{m/2}^2}\rb,
\end{equation*}
where the constants reflect the number of pair-matching patterns for $s.,t.$ and $r.$ and the even moments of $y_{ls_.},x_{kt.}$ and $x_{jr.}$. This is a polynomial in $x_{j1},\cdots,x_{jp_0}$ of degree $m$, and it has the same bound as $\ex_{p_0}S_1^m$ uniformly in $N$.

In $S_3$ some index $r_\alpha$ may match some other $t_\beta$, thus the we need to consider some more specific cases. Recall that there are only two types of Lyndon words for $3$-tuple $(j,k,l)$. For the case where $j<k\wedge l$, we still have the independence between the random variables $x_{jr_1},\cdots,x_{jr_m}$ and $x_{kt_1},\cdots,x_{kt_m}$. Thus, after taking conditional expectation, only those with pair-matching indices $r.$ and $t.$, respectively, remain non-vanishing. The indices $s.$ must also match in pairs, and therefore similar to $S_2$ one has that
\begin{equation*}
\ex_{p_0}S_3^m=\lb\sum_{p<r.\lq N}\frac{C_{r.,t.}}{r_1^2\cdots r_{m/2}^2}\rb\lb\sum_{s.\lq p_0}\frac{1}{s_1^2\cdots s_{m/2}^2}y_{ls_1}^2\cdots y_{ls_{m/2}}^2+\sum_{p_0<s.\lq N}\frac{C_{s.}}{s_1^2\cdots s_{m/2}^2}\rb,
\end{equation*}
which again leads to the same bound.

It is more intricate to deal with the case where $j=k<l$, as the independence between $x_{jr_1},\cdots,x_{jr_m}$ and $x_{jt_1},\cdots,x_{jt_m}$ is no longer true. For the conditional expectation not to vanish, the $2m$-tuple $\tau:=(r_1,\cdots,r_m,t_1,\cdots t_m)$ must match in pairs. So it suffices to consider the following terms in $S_3$:
\begin{equation}\label{S_3}
\sum_{s.\lq N}\frac{1}{s_1\cdots s_m}y_{ls_1}\cdots y_{ls_m}\lb\sum_{\tau\in \Pi_{2m}}\sum_{r.}\frac{1}{r_1\cdots r_m}x_{jr_1}\cdots x_{jr_m}x_{jt_1}\cdots x_{jt_m}\rb,
\end{equation}
where the last summation is over $p<r.\lq N$ subject to a pair-matching pattern of $\tau$. For fixed values of $s_1,\cdots,s_m$ and a pattern $\tau$, define $\alpha$ and $\beta$ as equivalent on $\{1,\cdots,m\}$ if $r_\alpha$ or $t_\alpha$ is equal to $r_\beta$ or $t_\beta$. Consider the equivalence relation generated by this relation. If $\alpha$ and $\beta$ are in an equivalent class $E\subset\{1,\cdots,m\}$, then the difference $r_\alpha-r_\beta$ is determined by the fixed choice of $s_\alpha,s_\beta$ and the matching constraint of $r_\alpha,t_\alpha,r_\beta,t_\beta$. In effect, for any $\alpha\in E$ the value of $r_\alpha$ determines the values of $r_\beta$ for all $\beta\in E$. Thus, one can choose $r_{\alpha_\ast}=\min\{r_\alpha:\alpha\in E\}$ and rewrite the last summation above as
\begin{equation*}
\sum_{r.}\frac{1}{r_1\cdots r_m}x_{jr_1}\cdots x_{jr_m}x_{kt_1}\cdots x_{kt_m}=\prod_E\sum_{p<r_{\alpha_\ast}\lq N}\frac{1}{r_{\alpha_1}\cdots r_{\alpha_{|E|}}}x_{jr_{\alpha_1}}\cdots x_{jr_{\alpha_{|E|}}}x_{jt_{\alpha_1}}\cdots x_{jt_{\alpha_{|E|}}}
\end{equation*}
where the product is taken over the equivalent class partition of the set $\{1,\cdots,m\}$. Then the expectation of the terms in the big parentheses in \eqref{S_3} is bounded by
\begin{equation*}
C_m\prod_E\sum_{r_{\alpha_\ast}>p}r_{\alpha_\ast}^{-|E|}\lq C_m\prod_Ep^{1-|E|}=C_mp^{n_m-m},
\end{equation*}
where $n_m$ is the number of equivalent classes, which is at most $m/2$, giving an upper bound $C_mp^{-m/2}$ for the quantity above. Thus $\ex_{p_0}S_3^m$ is again a polynomial in $y_{l1},\cdots,y_{lp_0}$ of degree $m$, and one has that
\begin{equation*}
\ex_{p_0}S_3^m\lesssim_mp^{-m/2}\lb\sum_{s\lq p_0}\frac{1}{s}y_{ls}\rb^m+p^{-m/2}\lb\sum_{p_0<s\lq N}\frac{1}{s^2}\rb^{m/2},
\end{equation*}
which again gives the same bound as previous cases.

It then follows from the triangle inequality that $\ex_{p_0}\Sigma_{jkl}^m\lq C_mp^{-m/2}(1+|v_{p_0}|_\ast^m)$. The same arguments apply to all other terms in $\wt \Delta^{(p)}$ (as for the terms $y_{jr}y_{ls}y_{k,r+s}$, the indices $j,k,l$ are never all the same), and the result is proved.
\end{proof}


\begin{remark}\label{moment2}
From the proof one sees that $\ex(|\wt V_p|^m|v_{p_0})\lesssim_{q,m}(1+|v_{p_0}|_\ast^m)p^{-m/2}$, where $|v_{p_0}|_\ast:=\sum_{j=1}^q\sum_{r\lq p_0}r^{-1}(|x_{jr}|+|y_{jr}|)$. Taking expectation again one gets $\ex|\wt V_p|^m\lq C_{q,m}p^{-m/2}$, and by setting $p_0=1,~p=2$ one obtains from the triangle inequality (obviously $V_2$ has bounded moments) that $\sup_{p\gq1}\ex|V_p|^m\lq C_{q,m}$.
\end{remark}

Returning to the applicaiton of Theorem \ref{global_estimate} to the characteristic function $\psi_p(\xi)$ of $V_p$, observe that the phase function can be written as $\Phi_p(v;\omega_0)=\omega_0\cdot V_p$, and so the polynomials $P_\beta$ in \eqref{separate_phase} correspond to the $\wt P_\beta$ in (i) of Lemma \ref{moments} with $|\alpha|=1,N=p$. Then in this case Lemma \ref{moments} (ii) implies that the integral against $\varphi_1=\phi_{p'}$ in \eqref{DI_estimate} is bounded for all $p\gq2p_0$, and thereby Theorem \ref{global_estimate} immediately implies:

\begin{theorem}\label{density}
The density $f_p$ of $V_p$ has continuous and uniformly bounded derivatives up to order $N$ if $p\gq2p_0$, where $p_0>2048(N+d)^2$ is an even integer s.t. $[\sqrt{2p_0}/4]+1$ is prime.
\end{theorem}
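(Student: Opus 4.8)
The plan is to recover $f_p$ from the characteristic function $\psi_p$ of $V_p$ by Fourier inversion, using the decay estimate of Theorem~\ref{global_estimate} to obtain enough integrability of $\xi^{\alpha}\psi_p(\xi)$ to differentiate under the integral sign up to order $N$.

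First I would record that $\psi_p(\xi)=I_p(\xi)$ in the notation of Theorem~\ref{global_estimate}, with amplitude $\varphi=\phi_p$, and that $\phi_p$ factorises as $\phi_p(v)=\phi_{p_0}(v_0)\,\phi_{p'}(v')$ with $\varphi_0=\phi_{p_0}\in\mathscr{S}(\RR^{2qp_0})$ and $\varphi_1=\phi_{p'}\in\mathscr{S}(\RR^{2qp'})$, so the hypotheses of Theorem~\ref{global_estimate} are met. Since $\Phi_p(v;\omega_0)=\omega_0\cdot V_p$, the polynomials $P_\beta$ of \eqref{separate_phase} are exactly the $\wt P_\beta$ of Lemma~\ref{moments}(i) taken with $|\alpha|=1$ and $N=p$; by Lemma~\ref{moments}(ii) together with Remark~\ref{moment2} (which gives $\sup_p\ex|V_p|^m\lq C_{q,m}$ for every $m$), the $v'$-integral
\[
\int_{\RR^{2qp'}}\Bigl(1+\sum_{|\beta|\lq3}|P_\beta(v')|^{\sqrt{2p_0}-2}\Bigr)\phi_{p'}(v')\,\td v'
\]
is finite and bounded by a constant $C_{q,p_0}$, \emph{uniformly in} $p\gq2p_0$. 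Applying Theorem~\ref{global_estimate} with $k=0$ then yields, for $|\xi|$ larger than some threshold $R_0=R_0(q,p_0,K)$, the bound $|\psi_p(\xi)|\lq C_{q,p_0,K}\,|\xi|^{-K/16}$, again uniformly in $p\gq2p_0$.

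Next I would choose $K$. Since $p_0>2048(N+d)^2$, the interval $\bigl(16(N+d),\sqrt{p_0/8}\,\bigr)$ is nonempty; fix any $K$ in it. Then $8K^2<p_0$, so the parameter constraints of Theorem~\ref{global_estimate} hold (recall $p_0$ is even with $[\sqrt{2p_0}/4]+1$ prime, as assumed), while $K/16>N+d$ ensures $\int_{|\xi|\gq R_0}(1+|\xi|)^N|\xi|^{-K/16}\,\td\xi<\infty$ in $\RR^d$. Using $|\psi_p(\xi)|\lq1$ on the bounded set $\{|\xi|<R_0\}$, we conclude
\[
\sup_{p\gq2p_0}\int_{\RR^d}(1+|\xi|)^N|\psi_p(\xi)|\,\td\xi<\infty.
\]
In particular $\psi_p\in L^1(\RR^d)$, so $V_p$ has the bounded continuous density $f_p(z)=(2\pi)^{-d}\int_{\RR^d}e^{-iz\cdot\xi}\psi_p(\xi)\,\td\xi$. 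For any multi-index $\alpha$ with $|\alpha|\lq N$ the dominating function $|(-i\xi)^\alpha e^{-iz\cdot\xi}\psi_p(\xi)|=|\xi|^{|\alpha|}|\psi_p(\xi)|\lq(1+|\xi|)^N|\psi_p(\xi)|$ is integrable independently of $z$; hence differentiation under the integral is legitimate, $f_p\in C^N(\RR^d)$ with $\partial^\alpha f_p(z)=(2\pi)^{-d}\int_{\RR^d}(-i\xi)^\alpha e^{-iz\cdot\xi}\psi_p(\xi)\,\td\xi$, continuity of $\partial^\alpha f_p$ follows by dominated convergence, and $|\partial^\alpha f_p(z)|\lq(2\pi)^{-d}\int_{\RR^d}(1+|\xi|)^N|\psi_p(\xi)|\,\td\xi$ is bounded by a constant depending only on $q,p_0,N$, uniformly in $z$ and in $p\gq2p_0$.

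The substantive analysis has already been carried out in Theorem~\ref{global_estimate} and Lemma~\ref{moments}; what remains here is bookkeeping. The points needing care are: (a) verifying that the $v'$-integral above is genuinely independent of $p$ --- this is precisely where the uniform moment bounds of Lemma~\ref{moments}(ii) and Remark~\ref{moment2} are used; and (b) the elementary arithmetic identifying $p_0>2048(N+d)^2$ as exactly the condition that lets one squeeze a $K$ between $16(N+d)$ and $\sqrt{p_0/8}$, so that a decay rate $K/16$ faster than $N+d$ is compatible with the constraint $p_0>8K^2$ imposed by Theorem~\ref{global_estimate}.
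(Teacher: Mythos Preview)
Your proposal is correct and follows essentially the same route as the paper: apply Theorem~\ref{global_estimate} to $\psi_p=I_p$ with the Gaussian amplitude factorised as $\phi_{p_0}\cdot\phi_{p'}$, use Lemma~\ref{moments} (with Remark~\ref{moment2}) to bound the $v'$-integral in \eqref{DI_estimate} uniformly in $p$, and then read off the density and its derivatives by Fourier inversion. The paper's own proof is just the short paragraph preceding the theorem statement; you have simply spelled out the Fourier-inversion step and the arithmetic showing that $p_0>2048(N+d)^2$ is exactly what allows one to pick $K$ with $16(N+d)<K<\sqrt{p_0/8}$, which the paper leaves implicit.
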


This is an analogue of part (1) of Lemma 11 in \cite{davie2014Patappstodifequusicou}; it is not clear whether part (2) of that lemma holds in the triple integral case. But at least we can conclude that the density $f_p$ converges uniformly in $p$ and we have the following:

\begin{corollary}
The random variable $V$ has a density with continuous and bounded derivatives up to any order.
\end{corollary}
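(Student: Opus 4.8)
The plan is to route everything through the characteristic function $\psi(\xi):=\ex e^{i\xi\cdot V}$ of $V$, showing it decays faster than every polynomial so that Fourier inversion delivers a density whose derivatives of all orders are bounded and continuous. First I would record that $V_p\to V$ in $L^m$ for every $m\gq2$, since Remark~\ref{moment2} gives $\ex|\wt V_p|^m=\ex|V-V_p|^m\lesssim_{q,m}p^{-m/2}$ (this also shows $V\in L^m$ for all $m$). Consequently, for each fixed $\xi$,
$$|\psi_p(\xi)-\psi(\xi)|\lq\ex\lv e^{i\xi\cdot V_p}-e^{i\xi\cdot V}\rv\lq\ex\lb|\xi|\,|\wt V_p|\wedge2\rb\longrightarrow0,$$
so $\psi_p(\xi)\to\psi(\xi)$ pointwise.

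Next I would make the decay bound for $\psi_p$ already implicit before Theorem~\ref{density} fully explicit and, crucially, uniform in $p$. Fix $N\in\NN$, put $K:=16(N+d+1)$, and pick an even integer $p_0>8K^2$ with $[\sqrt{2p_0}/4]+1$ prime. Apply Theorem~\ref{global_estimate} to $\psi_p=I_p$ with the Gaussian factorisation $\varphi_0=\phi_{p_0}$, $\varphi_1=\phi_{p'}$: here $\Phi_p(v;\omega_0)=\omega_0\cdot V_p$, so the polynomials $P_\beta$ in \eqref{separate_phase} are exactly the $\wt P_\beta$ of Lemma~\ref{moments}(i) with $|\alpha|=1$ and $N$ replaced by $p$, and Lemma~\ref{moments}(ii) bounds $\int_{\RR^{2qp'}}(1+\sum_{|\beta|\lq3}|P_\beta(v')|^{\sqrt{2p_0}-2})\phi_{p'}(v')\,\td v'$ by a constant independent of $p\gq2p_0$. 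Hence \eqref{DI_estimate} with $k=0$ gives a constant $C=C_{q,p_0,K}$ and a threshold $R_0=R_0(q,p_0,K)$, both independent of $p$, with $|\psi_p(\xi)|\lq C|\xi|^{-K/16}=C|\xi|^{-(N+d+1)}$ for all $|\xi|\gq R_0$ and all $p\gq2p_0$.

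Letting $p\to\infty$ and using $\psi_p\to\psi$ pointwise, I obtain $|\psi(\xi)|\lq C_{q,N}|\xi|^{-(N+d+1)}$ for $|\xi|\gq R_0$. As $N$ is arbitrary and $|\psi|\lq1$ everywhere, $\psi$ decays faster than any polynomial, so $\xi\mapsto|\xi|^{n}\psi(\xi)$ lies in $L^1(\RR^d)$ for every $n\in\NN$. By Fourier inversion $V$ has density $f(x)=(2\pi)^{-d}\int_{\RR^d}e^{-ix\cdot\xi}\psi(\xi)\,\td\xi$, and differentiating under the integral sign (valid because $|\xi|^{|\alpha|}\psi\in L^1$; continuity of $\partial^\alpha f$ by dominated convergence) yields $\partial^\alpha f(x)=(2\pi)^{-d}\int_{\RR^d}(-i\xi)^\alpha e^{-ix\cdot\xi}\psi(\xi)\,\td\xi$, whence $\|\partial^\alpha f\|_\infty\lq(2\pi)^{-d}\int_{\RR^d}|\xi|^{|\alpha|}|\psi(\xi)|\,\td\xi<\infty$ for every multi-index $\alpha$. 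Thus $f\in C^\infty(\RR^d)$ with all derivatives bounded and continuous, which is the claim.

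The one step deserving attention — and the one I would flag as the main obstacle, although it is already supplied by the earlier results — is the uniformity in $p$ of the constant and threshold in the decay of $\psi_p$: this rests on Lemma~\ref{moments}(ii) (which makes the $v'$-integral in \eqref{DI_estimate} $p$-independent) and on the fact that the ``$|\xi|$ large'' threshold inside Theorem~\ref{global_estimate} depends only on $q,p_0,K$, since all the scaling steps in its proof remain inside the fixed $p_0$-block. An alternative, slightly more hands-on route would combine the uniform $C^N$-bounds on $f_p$ from Theorem~\ref{density} with an Arzel\`{a}--Ascoli diagonal extraction and the uniform tightness $\sup_p\int_{\{|x|>R\}}f_p(x)\,\td x\lq R^{-m}\sup_p\ex|V_p|^m$ from Remark~\ref{moment2} to identify the $C^\infty$-limit as the density of $V$; the characteristic-function argument above is cleaner.
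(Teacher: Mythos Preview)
Your argument is correct and follows the same underlying mechanism as the paper: both rest on the uniform-in-$p$ decay of $\psi_p(\xi)$ obtained by combining Theorem~\ref{global_estimate} with Lemma~\ref{moments}(ii). The paper does not give a separate proof of the corollary; it simply remarks that ``the density $f_p$ converges uniformly in $p$'' and states the corollary as an immediate consequence of Theorem~\ref{density}. Your write-up is in fact more careful than the paper's one-line justification: you pass to the limit on the Fourier side (pointwise convergence $\psi_p\to\psi$ together with the $p$-uniform bound $|\psi_p(\xi)|\lq C|\xi|^{-(N+d+1)}$), and then invoke Fourier inversion, whereas the paper tacitly relies on the density-side convergence that you correctly identify as requiring an Arzel\`a--Ascoli/tightness argument. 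Substantively there is no difference in route, only in where the limit is taken; your version is the cleaner execution.
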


By the mean value theorem for $p>p_0$ all the derivatives of $f_p$ up to order $N$ are Lipschitz. Combining this fact with the last comment in Remark \ref{moment2} one can show the rapid decay of the density $f_p$ of $V_p$ for large $p$, due to the following observation.

\begin{lemma}\label{density_decay}
Let $f:\RR^d\to\RR$ be a Lipschitz function with Lipschitz constant $L$. If the moments $\mu_m:=\int_{\RR^d}|x|^m|f(x)|\td x<\infty$ for all $m>0$, then $|f(x)|\lesssim_{d,L}\mu_{r(d+1)}^{1/(d+1)}|x|^{-r}$ for any $r>0$ and $|x|$ sufficiently large.
\end{lemma}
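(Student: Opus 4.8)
The plan is the standard ``a slowly varying function with finite moments must decay'' argument, carried out through a single ball estimate. First I would record that $f$ is bounded. Being $L$-Lipschitz on all of $\RR^d$ it satisfies $|f(x)|\lq|f(0)|+L|x|$, so $f$ is bounded on compacts; and if $|f(x_n)|\to\infty$ along some sequence, then $|x_n|\to\infty$, the balls $B\big(x_n,|f(x_n)|/(2L)\big)$ have radius $\lq|x_n|/2+|f(0)|/(2L)$, and on each such ball $|f|\gq|f(x_n)|/2$ by the Lipschitz property. Since the infimum of $|x|$ on that ball is $\gtrsim|x_n|\gtrsim|f(x_n)|$, this forces $\mu_1\gtrsim|f(x_n)|^{d+2}\to\infty$, contradicting $\mu_1<\infty$. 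Write $M:=\|f\|_\infty<\infty$ and $c:=M/(2L)$.

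Now the core estimate. Fix $r>0$, set $m:=r(d+1)$, and let $x_0$ have $|x_0|$ large, $a:=|f(x_0)|>0$ (the bound is trivial if $f(x_0)=0$). Since $f$ is $L$-Lipschitz, $|f(x)|\gq a/2$ for every $x\in B\big(x_0,a/(2L)\big)$. Because $a/(2L)\lq c$, this ball lies inside $\{x:|x|\gq|x_0|-c\}$, and for $|x_0|$ beyond an ($r$-dependent) threshold one has $(|x_0|-c)^{m}\gq\tfrac12|x_0|^{m}$. Writing $\omega_d:=\Lambda^d(B(0,1))$, this gives
\begin{equation*}
\mu_{m}\gq\int_{B(x_0,a/(2L))}|x|^{m}|f(x)|\,\td x\gq\frac a2\cdot\frac{|x_0|^{m}}{2}\cdot\Lambda^d\big(B(0,\tfrac{a}{2L})\big)=\frac{\omega_d}{2^{d+2}L^d}\,a^{d+1}|x_0|^{m}.
\end{equation*}
Rearranging and taking $(d+1)$-th roots yields $a\lq\big(2^{d+2}L^d/\omega_d\big)^{1/(d+1)}\,\mu_{r(d+1)}^{1/(d+1)}\,|x_0|^{-r}$, which is precisely the assertion, with an implied constant depending only on $d$ and $L$.

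The only point worth stating with care — the rest is elementary — is keeping the implied constant free of $r$. This is why one bounds the ball away from the origin by $|x_0|-c$ (using boundedness of $f$ to control the radius $a/(2L)\lq c$) and then uses $(|x_0|-c)^{r(d+1)}\gq\tfrac12|x_0|^{r(d+1)}$ for $|x_0|$ large, rather than the cruder $|x|\gq|x_0|/2$ on the ball, which would leave a factor $2^r$ in the constant. Everything else — finiteness of $\mu_m$ for the chosen exponent, the volume of the ball, the elementary rearrangement — is routine.
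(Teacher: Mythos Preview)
Your proof is correct and follows the same ball-estimate idea as the paper: on a ball about $x_0$ of radius proportional to $|f(x_0)|/L$ the Lipschitz condition forces $|f|\gtrsim|f(x_0)|$, and integrating $|x|^m|f(x)|$ over that ball yields $\mu_m\gtrsim_{d,L}|f(x_0)|^{d+1}|x_0|^m$. The only technical difference is in controlling $|x|^m$ on the ball. You first establish boundedness of $f$ so that the radius $a/(2L)\lq c$ is fixed, and then invoke $(|x_0|-c)^m\gq\tfrac12|x_0|^m$ for $|x_0|$ large, which makes the threshold $r$-dependent. The paper instead integrates only over the half $B\big(x,\tfrac{3\alpha}{4L}\big)\cap\{|y|\gq|x|\}$ of the ball further from the origin, on which $|y|^m\gq|x|^m$ holds automatically; this avoids the boundedness preliminary and gives an $r$-independent threshold $|x|>|f(0)|/L$. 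Both variants give the stated bound with a constant depending only on $d$ and $L$.
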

\begin{proof}
If $f\equiv0$ outside a large ball then the claim is trivial. Otherwise take an $x\notin B(0,|f(0)|/L)$ s.t. $\alpha:=|f(x)|>0$. By the Lipschitz condition $|f(x)|<2L|x|$. Moreover, for any $y\in B(x,\frac{3\alpha}{4L})$ one has that $|f(y)|\gq|f(x)|-L|x-y|\gq\alpha/4$. Thus, by assumption for any $m>0$ we have that
\begin{equation*}
\mu_m\gq\int_{B(x,\frac{3\alpha}{4L})\cap\{|y|\gq|x|\}}|y|^m|f(y)|\td y\gq C_d\lb\frac{3\alpha}{4L}\rb^d|x|^m\frac{\alpha}{4}=C_{d,L}|x|^m\alpha^{d+1},
\end{equation*}
which in turn implies that $|f(x)|\lq C_{d,L}(\mu_m|x|^{-m})^{1/(d+1)}$. For any $r>0$ let $m=r(d+1)$, and the result follows from the arbitrary choice of $x$.
\end{proof}

Henceforth we have that $|f_p(y)|\lq C_{d,p_0,r}|y|^{-r}$ for any $r>0,~p>p_0$ and $y\in\RR^d$ sufficiently far away from $0$. Moreover, the argument for the interpolation inequality \eqref{interpolation} also applies here, with $m$ replace by any $k\gq0$ and the decay rate $C_{q,m}e^{-c_q|y|}$ replaced by $C_{d,p_0,k}|y|^{-r}$ for the derivative $\tD^kf_p$. Therefore we arrive at the following conclusion.

\begin{theorem}\label{density_smooth}
For any $N\gq1$ let $p_0$ be defined as in Theorem \ref{density}. Then for all $p\gq2p_0$, all the derivatives of $f_p$ up to order $N$ have rapid decay.
\end{theorem}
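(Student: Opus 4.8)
The plan is to combine the uniform bounds on the higher derivatives of $f_p$ coming from Theorem~\ref{density} with the polynomial decay of $f_p$ itself, which follows from Lemma~\ref{density_decay}, and then to propagate that decay to the derivatives through the interpolation inequality~\eqref{interpolation}. First I would fix $N\gq1$ and apply Theorem~\ref{density} with $2N$ in place of $N$: take $p_0$ an even integer with $[\sqrt{2p_0}/4]+1$ prime and $p_0>2048(2N+d)^2$, so that for every $p\gq2p_0$ the density $f_p$ has continuous derivatives up to order $2N$, bounded by a constant independent of $p$. By the mean value theorem the derivatives of $f_p$ of order at most $2N-1$ --- in particular those of order at most $N$ --- are then Lipschitz, with constants controlled by $\sup_{p\gq2p_0}\sup_{y}|\tD^{2N}f_p(y)|<\infty$. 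Moreover, by the closing remark of Remark~\ref{moment2}, $\mu_m:=\int_{\RR^d}|y|^m|f_p(y)|\,\td y=\ex|V_p|^m\lq C_{q,m}$ uniformly in $p$ for every $m>0$.

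Lemma~\ref{density_decay} applied to $f_p$ then yields, for each $r>0$, a bound $|f_p(y)|\lq C_{d,p_0,r}|y|^{-r}$ for all $|y|$ beyond a threshold that, like the constant, is independent of $p$. To pass this to the derivatives I would rerun the interpolation argument behind~\eqref{interpolation}: for $|\zeta|$ large the ball $B(\zeta,1)$ is disjoint from a fixed large ball about the origin, and Lemma~9 of~\cite{davie2014Patappstodifequusicou} on $B(\zeta,1)$, with $m$ replaced by an arbitrary integer $k\lq N$, gives
\[
|\tD^kf_p(\zeta)|\lq C_{d,k}\max\Bigl\{\sup_{y\in B(\zeta,1)}\sqrt{f_p(y)}\ \sup_{y\in B(\zeta,1)}\sqrt{|\tD^{2k}f_p(y)|},\ \sup_{y\in B(\zeta,1)}f_p(y)\Bigr\}.
\]
Since $2k\lq2N$, the factor $\sup_{y\in B(\zeta,1)}|\tD^{2k}f_p(y)|$ is bounded uniformly in $p$, while $\sup_{y\in B(\zeta,1)}f_p(y)\lq C_{d,p_0,r}(|\zeta|-1)^{-r}$, so $|\tD^kf_p(\zeta)|\lq C_{d,p_0,k,r}|\zeta|^{-r/2}$. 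Since $r>0$ was arbitrary, this is exactly rapid decay of $\tD^kf_p$ for every $k\lq N$.

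The one point needing care is the bookkeeping of uniformity in $p$ --- making sure that the Lipschitz constants, the moments, the decay thresholds and the constants in the interpolation inequality are all independent of $p$ for $p\gq2p_0$. Each of these has already been supplied (by Theorem~\ref{density} at order $2N$, by Remark~\ref{moment2}, by Lemma~\ref{density_decay}, and by Lemma~9 of~\cite{davie2014Patappstodifequusicou}), so there is no real analytic obstacle; the only adjustment relative to the statement is that Theorem~\ref{density} must be invoked at order $2N$ rather than $N$, which merely enlarges $p_0$ by a constant factor.
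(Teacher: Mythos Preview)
Your proposal is correct and follows essentially the same route as the paper: apply Theorem~\ref{density} to obtain uniformly bounded higher derivatives, use Remark~\ref{moment2} for the uniform moment bounds, deduce polynomial decay of $f_p$ from Lemma~\ref{density_decay}, and then propagate that decay to the derivatives via the interpolation inequality~\eqref{interpolation}. Your observation that Theorem~\ref{density} should be invoked at order $2N$ rather than $N$ (forcing a slightly larger $p_0$) is a valid piece of bookkeeping that the paper glosses over in its statement.
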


\section{Main Result and Further Discussion}

We are now finally ready to proceed to the coupling result, by which we wish to characterise a candidate random variable $\bar V_p$ s.t. the distance $\wass_2(V,V_p+\bar V_p)$ is small.

\begin{theorem}\label{main}
Let $2\lq m\in\ZZ^+$ and $p_0>2048(m+3d+3)^2$ be an even integer s.t. $[\sqrt{2p_0}/4]+1$ is prime. For any $p\gq2p_0$, suppose there exists an $\RR^d$-random variable $\bar{V}_p$ having the same conditional moments given $v_p$ as those of $\wt V_p$ up to order $m-1$ and having density $\chi_y$ conditional on that $V_p=y$. If the function $y\mapsto\chi_y(w)$ is at least $C^{m+d+1}(\RR^d)$ and for $n,k\gq0$ there is a constant $M(n,k)\gq0$ s.t.
\begin{equation*}
\int_{\RR^d}|w|^n|\tD_y^k\chi_y(w)|\td w\lq C_{d,n,k}(1+|y|^{M(n,k)})p^{-n/2},
\end{equation*}
then there exists a constant $C$ depending on $d,m,p_0$ and the density $f_p$ of $V_p$ s.t. $\forall p\gq2p_0$,
\begin{equation*}
\wass_2(V,V_p+\bar{V}_p)\lq Cp^{-m/4}.
\end{equation*}
\end{theorem}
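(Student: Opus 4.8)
The plan is to reduce, via the elementary bound \eqref{crude_bound_1}, to showing that the densities $g$ of $V$ and $h$ of $V_p+\bar V_p$ satisfy $\int_{\RR^d}|z|^2|g(z)-h(z)|\,\td z\lesssim p^{-m/2}$. As already observed, $\wt V_p$ is not independent of $V_p$, so $g$ is not a convolution of $f_p$ with the law of $\wt V_p$ and the Taylor-in-density argument of the double-integral case does not apply. Instead I pass to the Fourier side: writing $G(\xi)=\ex e^{i\xi\cdot V}$ and $H(\xi)=\ex e^{i\xi\cdot(V_p+\bar V_p)}$, inversion gives $g(z)-h(z)=(2\pi)^{-d}\int_{\RR^d}e^{-iz\cdot\xi}(G(\xi)-H(\xi))\,\td\xi$, so that applying $(1-\Delta_\xi)^\ell$ — equivalently, weighting by $(1+|z|^2)^\ell$ on the physical side, with $2\ell>d+2$ — reduces the claim to
\[
\sum_{|\alpha|\lq 2\ell}\int_{\RR^d}\lv\tD^\alpha(G-H)(\xi)\rv\,\td\xi\lesssim p^{-m/2},
\]
where $2\ell$ is a fixed number depending only on $d$.

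The first ingredient is a low-frequency estimate of $\tD^\alpha(G-H)$. Conditioning on $V_p$ one has $\tD^\alpha G(\xi)=i^{|\alpha|}\ex\!\lb e^{i\xi\cdot V_p}\!\int_{\RR^d}(V_p+w)^\alpha e^{i\xi\cdot w}\kappa_{V_p}(w)\,\td w\rb$, and the same formula with $\chi$ in place of $\kappa$. Since $\bar V_p$ and $\wt V_p$ have the same conditional moments given $V_p$ up to order $m-1$, Taylor-expanding the kernel $(y+w)^\alpha e^{i\xi\cdot w}$ in $w$ to degree $m-1$ makes the polynomial part cancel in the difference, leaving only the order-$m$ remainder, which is dominated by $|w|^m(1+|y|+|w|)^{|\alpha|}(1+|\xi|)^m$. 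Invoking Remark \ref{moment2} ($\ex|\wt V_p|^n\lesssim p^{-n/2}$ and $\sup_p\ex|V_p|^n<\infty$), the moment hypothesis on $\chi$, and Cauchy--Schwarz for the mixed terms $\ex(|V_p|^i|\wt V_p|^{m+j})$, one obtains for every $\alpha$ and all $\xi$ that $|\tD^\alpha(G-H)(\xi)|\lesssim_{m,\alpha}(1+|\xi|)^m\,p^{-m/2}$.

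The second ingredient is the rapid decay of $\tD^\alpha G$ and $\tD^\alpha H$ at high frequency. For $G$, apply Theorem \ref{global_estimate} to the characteristic function $\psi_N=I_N$ of $V_N$, factorising the Gaussian amplitude $\phi_N=\phi_{p_0}\cdot\phi_{N-p_0}$: by Lemma \ref{moments}(ii) the integral appearing in \eqref{DI_estimate} is finite and bounded uniformly in $N\gq2p_0$, so $|\tD^\alpha\psi_N(\xi)|\lesssim|\xi|^{-K/16}$ uniformly in $N$, and letting $N\to\infty$ yields $|\tD^\alpha G(\xi)|\lesssim|\xi|^{-K/16}$, with the exponent as large as the constraint $p_0>8K^2$ allows. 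For $H$, note that $H$ is the Fourier transform of $h(z)=\int_{\RR^d}f_p(z-w)\chi_{z-w}(w)\,\td w$; since $f_p$ has uniformly bounded, rapidly decaying derivatives up to order $N=m+2d+3$ (Theorems \ref{density} and \ref{density_smooth}) and $y\mapsto\chi_y(w)$ is of class $C^{m+d+1}$ with $\int|w|^n|\tD_y^k\chi_y(w)|\,\td w\lesssim(1+|y|^{M(n,k)})p^{-n/2}$, distributing derivatives between the two factors by Leibniz's rule and integrating by parts in $z$ gives $|\tD^\alpha H(\xi)|\lesssim_\alpha|\xi|^{-(m+d+1)}$ uniformly in $p$ (this also shows $g$ and $h$ have uniformly rapidly decaying tails).

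Finally, one splits each $\int|\tD^\alpha(G-H)|\,\td\xi$ at $|\xi|=R$, controlling $|\xi|\lq R$ by $p^{-m/2}R^{m+d}$ from the low-frequency estimate and $|\xi|>R$ from the high-frequency decay, and optimises over $R$ (a suitable power of $p$); combined with \eqref{crude_bound_1} this yields $\wass_2(V,V_p+\bar V_p)\lesssim p^{-m/4}$. I expect this last, quantitative, step to be the main obstacle: the rate hinges on the high-frequency decay of $G$ being fast enough, which is precisely why $p_0$ must satisfy the stated (large) lower bound so that Theorem \ref{global_estimate} delivers enough decay, and one must check that assuming smoothness of $\chi_y$ only in its subscript $y$ — rather than in its argument $w$ — still gives a fast enough decay of $H$ via the integration by parts in $z$ against the rapidly decaying $f_p$. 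The passage to characteristic functions, forced by the absence of a convolution structure, is what allows the conditional moment matching to still be exploited cleanly in Step two.
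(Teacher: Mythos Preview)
Your overall architecture --- Fourier inversion, conditional moment matching via a Taylor expansion, and Theorem \ref{global_estimate} for the oscillatory decay --- is the paper's, but the way you organise the estimates loses the rate. The culprit is the split at $|\xi|=R$: your high-frequency bounds on $\tD^\alpha G$ and $\tD^\alpha H$ carry \emph{no} factor of $p^{-m/2}$ (you bound $G$ and $H$ separately, not their difference), while only the low-frequency piece does. Optimising $R^{m+d}p^{-m/2}+R^{-(m+1)}$ over $R$ therefore gives $\int|\tD^\alpha(G-H)|\,\td\xi\lesssim p^{-m(m+1)/(2(2m+d+1))}$, hence $\int|z|^2|g-h|\,\td z$ of the same order, and after the square root in \eqref{crude_bound_1} only $\wass_2\lesssim p^{-m/8}$ at best --- not $p^{-m/4}$. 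The high-frequency decay of $H$ cannot be pushed further because $\chi_y$ is only $C^{m+d+1}$ in $y$, so no choice of $p_0$ rescues the optimisation.

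The missing idea is to Taylor-expand the \emph{exponential} $e^{i\xi\cdot\wt V_p}$ (and $e^{i\xi\cdot\bar V_p}$) inside $\ex_p$ \emph{before} any differentiation in $\xi$, so that the order-$m$ remainder stays inside the oscillatory integral. This gives
\[
g(z)-h(z)=\frac{i^m}{(2\pi)^d}\sum_{|\alpha|=m}\frac{m}{\alpha!}\int_0^1(1-\theta)^{m-1}\int_{\RR^d}e^{-iz\cdot\xi}\xi^\alpha\bigl(\rho(\xi)-\eta(\xi)\bigr)\,\td\xi\,\td\theta,
\]
with $\rho(\xi)=\ex\bigl(e^{i\xi\cdot(V_p+\theta\wt V_p)}\wt V_p^\alpha\bigr)$ and $\eta$ the analogue with $\bar V_p$. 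The point is that $\rho$ and $\eta$ now carry the $p^{-m/2}$ factor \emph{and} decay like $|\xi|^{-(m+d+1)}$, together with their first $d+3$ $\xi$-derivatives, \emph{simultaneously}. For $\rho$ one truncates $\wt V_p$ to $\wt V_{p,N}$, uses Lemma \ref{moments}(i) to write $\wt V_{p,N}^\alpha=\sum_\beta v_{p_0}^\beta\wt P_\beta(v'_N)$, and applies Theorem \ref{global_estimate} with amplitudes $\varphi_0=v_{p_0}^\beta\phi_{p_0}$ and $\varphi_1=\wt P_\beta\,\phi_{N'}$: the integral in \eqref{DI_estimate} is then $\lesssim p^{-m/2}$ by Lemma \ref{moments}(ii), uniformly in $N$. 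For $\eta$ one conditions on $V_p=y$, writes $\eta(\xi)=\int e^{i\xi\cdot y}f_p(y)\int e^{i\theta\xi\cdot w}w^\alpha\chi_y(w)\,\td w\,\td y$, and applies Lemma \ref{fourier_bound} in $y$ using Theorem \ref{density_smooth} and the hypothesis on $\tD_y^k\chi_y$ --- exactly the mechanism you described for $H$, but now the inner integral already supplies the $p^{-m/2}$ via the assumption with $n\gq m$. A final application of Lemma \ref{fourier_bound} to $\xi^\alpha\rho$ and $\xi^\alpha\eta$ then gives $|g(z)-h(z)|\lesssim p^{-m/2}|z|^{-(d+3)}$ directly, with no $R$-optimisation, and \eqref{crude_bound_1} yields $\wass_2\lesssim p^{-m/4}$.
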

\begin{proof}
Let $g$ and $h$ be the densities of $V$ and $\bar{V}:=V_p+\bar{V}_p$, respectively. Then by the inversion formula and the tower property, for all $z\in\RR^d$,
\begin{align*}
g(z)-h(z)=&\frac{1}{(2\pi)^d}\int_{\RR^d}e^{-iz\cdot\xi}\lb\ex e^{i\xi\cdot V}-\ex e^{i\xi\cdot\bar{V}}\rb\td\xi\\
=&\frac{1}{(2\pi)^d}\int_{\RR^d}e^{-iz\cdot\xi}\ex\lb e^{i\xi\cdot V_p}\ex_p\lb e^{i\xi\cdot\wt{V}_p}-e^{i\xi\cdot\bar{V}_p}\rb\rb\td\xi,
\end{align*}
where $\ex_p:=\ex(\cdot|v_p)$. Applying Taylor's theorem to $\exp(i\xi\cdot\wt V_p)$ and $\exp(i\xi\cdot\bar V_p)$ inside the conditional expectation up to order $m$, one sees that the first $m-1$ differences vanish due to the moment matching assumption and hence
\begin{equation*}
\ex_p\lb e^{i\xi\cdot\wt{V}_p}-e^{i\xi\cdot\bar{V}_p}\rb=i^m\sum_{|\alpha|=m}\frac{m}{\alpha!}\xi^\alpha\int_0^1(1-\theta)^{m-1}\ex_p\lb\wt V_p^\alpha e^{i\theta\xi\cdot\wt V_p}-\bar V_p^\alpha e^{i\theta\xi\cdot\bar V_p}\rb\td\theta.
\end{equation*}
Thus we have the identity
\begin{equation*}
g(z)-h(z)=\frac{1}{(2\pi)^d}\sum_{|\alpha|=m}i^m\frac{m}{\alpha!}\int_0^1(1-\theta)^{m-1}\int_{\RR^d}e^{-iz\cdot\xi}\xi^\alpha\lb\rho(\xi)-\eta(\xi)\rb\td\xi\td\theta,
\end{equation*}
where
\begin{equation*}
\rho(\xi)=\rho(\xi;p,\alpha,\theta):=\ex\lb e^{i\xi\cdot(V_p+\theta\wt V_p)}\wt V_p^\alpha\rb,
\end{equation*}
and $\eta(\xi)$ is similarly defined by replacing $\wt V_p$ with $\bar V_p$. The goal is then to show the rapid decay in $|z|$ of the $\td\xi$ integral above with an appropriate rate of decay in $p$. This should follow from the rapid decay in $|\xi|$ of the derivatives of $\rho(\xi)$ and $\eta(\xi)$.

To this end it is easier to work with, instead of $\wt V_p$, the `truncated remainder' $\wt V_{p,N}:=V_N-V_p$ up to some integer $N\gg p$. Replace $\wt V_p$ with $\wt V_{p,N}$ in $\rho(\xi)$ and denote it by $\rho_N(\xi)$. Recall the notations $v'_N=\{(x_{jr},y_{jr}):j=1,\cdots,q;r=p_0+1,\cdots,N\}$ and $N'=N-p_0$. For $p\gq2p_0$, recall from Lemma \ref{moments} (i) the power $\wt V_{p,N}^\alpha$, as a function of $v_{p_0}$ and $v'_N$, takes the form
\begin{equation*}
\wt V_{p,N}^\alpha=\sum_\beta v_{p_0}^\beta\wt P_\beta(v'_N),
\end{equation*}
where the number of summands depends only on $p_0$ and $\alpha$. By the tower property again,
\begin{align*}
\rho_N(\xi):=&\ex\lb e^{i\xi\cdot(V_p+\theta\wt V_{p,N})}\wt V_{p,N}^\alpha\rb=\ex\sum_\beta\wt P_\beta(v'_N)\ex\lb\left.e^{i\xi\cdot(V_p+\theta\wt V_{p,N})}v_{p_0}^\beta\rv v'_N\rb\\
=&\sum_\beta\int_{\RR^{2qN'}}\wt P_\beta(v')\phi_{N'}(v')\td v'\int_{\RR^{2qp_0}}e^{i|\xi|\wh\Phi_N(v_0,v';\theta,\omega_0)}v_0^\beta\phi_{p_0}(v_0)\td v_0,
\end{align*}
where $\omega_0=\xi/|\xi|$ and the phase function $\wh\Phi_N(v_0,v';\theta,\omega_0)$ is similar to $\Phi_N(v;\omega_0)$ in the sense that some of the terms in $\wh\Phi_N(v;\theta,\omega_0)-\Phi_{p_0}(v_0)$ have dilated frequency $\theta\omega_0$ - recall the decomposition \eqref{phase_p} with $p$ replaced by $N$.

The functions $\varphi_0(v_0):=v_0^\beta\phi_{p_0}(v_0)$ and $\varphi_1(v'):=\wt P_\beta(v')\phi_{N'}(v')$ are both Schwartz, and for any $K,k\in\NN,~\|\varphi_0\|_{\varkappa+3k,K}\lq C_{q,p_0,k,K}$. Similar to \eqref{separate_phase} write $\wh\Phi_N(v;\theta,\omega_0)=\sum_\gamma v_0^\gamma\wh P_\gamma(v')$. Then Lemma \ref{moments} (ii), together with the Cauchy-Schwartz inequality, gives
\begin{align*}
\int_{\RR^{2qN'}}\lb1+\sum_\gamma|\wh P_\gamma(v')|^{\sqrt{2p_0}+k-2}\rb\varphi_1(v')\td v'=&\ex\lb1+\sum_\gamma|\wh P_\gamma(v'_N)|^{\sqrt{2p_0}+k-2}\rb|\wt P_\beta(v'_N)|\\
\lq&C_{q,p_0,\alpha,k}p^{-m/2},
\end{align*}
for all multi-indices $\beta$ and all $k\in\NN$. Moreover, for fixed $\theta$ and $v'$ the function $\wh\Phi_N(v;\theta,\omega_0)$ only differs from $\Phi_{p_0}(v_0;\omega)$ by a quadratic polynomial in $v_0$ with no singularity in $\theta$, which is insignificant according to the last part of the proof of Theorem \ref{global_estimate} (an additional power in $v'$ may appear in the integral against $\varphi_1$, but this integral will again be independent of $p$ by Lemma \ref{moments}). Thus, by Theorem \ref{global_estimate} (for $K=m+d+1$) the integral $\rho_N(\xi)$ is a smooth function s.t. for all $k\gq0,~N\gg p,~\theta\in(0,1)$ and $|\xi|$ sufficiently large,
\begin{equation}\label{rho_derivatives}
|\tD^k\rho_N(\xi)|\lq C_{q,p_0,m,k}|\xi|^{-m-d-1}p^{-m/2}.
\end{equation}
Thereby for all $N$ and $\theta$ the function $G_N(\xi):=\xi^\alpha\rho_N(\xi)$ has uniformly bounded derivatives, and in particular $\|G_N\|_{d+1,d+3}\lq\|\rho_N\|_{m+d+1,d+3}\lq C_{d,p_0,m}p^{-m/2}$. Applying Lemma \ref{fourier_bound} with ($k\lq$)$K=d+3$ one deduces that
\begin{equation}\label{rho_decay}
\lv\int_{\RR^d}e^{-iz\cdot\xi}\xi^\alpha\rho_N(\xi)\td\xi\rv\lq C_{d,p_0,m}|z|^{-d-3}p^{-m/2}.
\end{equation}
This estimate is uniform in $N$, and therefore by taking the limit $N\to\infty$ the same bound holds for the integral $\int_{\RR^d}e^{-iz\cdot\xi}\xi^\alpha\rho(\xi)\td\xi$.

For the other integral $\int_{\RR^d}e^{-iz\cdot\xi}\xi^\alpha\eta(\xi)\td\xi$, from the same arguments above it suffices to show that $\forall0\lq k\lq d+3,~N\gg p,~\theta\in(0,1)$ and $|\xi|$ sufficiently large the estimate \eqref{rho_derivatives} also holds for $\eta$. By conditioning on the value of $V_p$ one finds the identity
\begin{equation*}
\eta(\xi)=\ex e^{i\xi\cdot V_p}\ex\lb e^{i\theta\xi\cdot\bar V_p}\bar V_p^\alpha\left|V_p\right.\rb=\int_{\RR^{d}}e^{i\xi\cdot y}f_p(y)\int_{\RR^d}e^{i\theta\xi\cdot w}w^\alpha\chi_y(w)\td w\td y.
\end{equation*}
Differentiating $\eta(\xi)$ by $k$ times by Leibniz's rule, one has that $\forall\tau\in\NN^d,~|\tau|=k$,
\begin{align*}
\partial^\tau\eta(\xi)&=\int_{\RR^d}e^{i\xi\cdot y}f_p(y)\sum_{\sigma\lq\tau}\binom{\tau}{\sigma}(iy)^{\tau-\sigma}\partial_\xi^\sigma\lb\int_{\RR^d}e^{i\theta\xi\cdot w}w^\alpha\chi_y(w)\td w\rb\td y\\
&=i^k\sum_{\sigma\lq\tau}\binom{\tau}{\sigma}\theta^{|\sigma|}\int_{\RR^d}e^{i\xi\cdot y}\underbrace{y^{\tau-\sigma}f_p(y)\int_{\RR^d}e^{i\theta\xi\cdot w}w^{\sigma+\alpha}\chi_y(w)\td w}_{=:H_{\tau,\sigma}(y;\xi,\theta)}\td y.
\end{align*}
Then by Leibniz's rule again for any $\beta\in\NN^d,~|\beta|\lq m+d+1$,
\begin{align*}
\partial_y^\beta H_{\tau,\sigma}(y;\xi,\theta)=&\sum_{\nu\lq\beta}\binom{\beta}{\nu}\partial_y^{\beta-\nu}(y^{\tau-\sigma}f_p(y))\int_{\RR^d}e^{i\theta\xi\cdot w}w^{\sigma+\alpha}\partial_y^\nu\chi_y(w)\td w,
\end{align*}
which is bounded in $\xi$ and $\theta$. Thus, by the moment assumption on the conditional density of $\chi_y$ and counting the maximum power of $y$, one sees from Theorem \ref{density_smooth} that
\begin{equation*}
\sup_{\xi\in\RR^d,\theta\in(0,1)}\|H_{\tau,\sigma}(\cdot;\xi,\theta)\|_{d+1,m+d+1}\lq C_{m,d,k}p^{-m/2}\|f_p\|_{k+d+1+M(m+k,m+d+1),m+d+1},
\end{equation*}
for all $\theta\in(0,1)$ and $\xi\in\RR^d$. Thus the sought-after estimate \eqref{rho_derivatives} for $|\tD^k\eta(\xi)|$ follows from Lemma \ref{fourier_bound} for $K=m+d+1$, with the Schwarz norm of $f_p$ above as a multiplicative factor. Apply Lemma \ref{fourier_bound} again for ($k\lq$)$K=d+3$ we obtain \eqref{rho_decay} with $\rho_N$ replaced by $\eta$ and a multiplicative factor $\|f_p\|_{2d+4+M(m+d+3,m+d+1),m+d+1}$.

The result then follows from the inequality \eqref{crude_bound_1} for $p=2$.
\end{proof}

To finish off this article I shall make the following remarks regarding the remaining difficulties of the coupling problem for the triple stochastic integral.

\paragraph{Rate of convergence.} As opposed to the rate $O(p^{-m/2})$ obtained in \cite{davie2014Patappstodifequusicou} for the double integral, the rate $O(p^{-m/4})$ is probably the best one can expect simply from Theorem \ref{global_estimate} and Theorem \ref{density} alone. This is because the particular form of the phase function $\Phi_p$ and its derivatives are not fully exploited. In fact we have only used the fact that the phase function $\Phi_p$ is a cubic polynomial in Lemma \ref{rho_big} and Lemma \ref{rho_small}. Moreoever, the inequality \eqref{crude_bound_1} itself is not very sharp, especially for the quadratic distance $\wass_2$.

Despite this limitation, to my best knowledge what is proved so far is the first attempt to find a coupling for triple stochastic integrals. I believe that Davie's rate $O(p^{-m/2})$ could still be achieved if analogues of Lemma 12, 13 and especially 14 in \cite{davie2014Patappstodifequusicou} can be proved.

\paragraph{Generation of $\bar V_p$.} The requirements for the candidate $\bar V_p$ in Theorem \ref{main} are not straightforward. While the moment-matching condition is relatively easy to meet (similar to the discussion after Theorem 15 in \cite{davie2014Patappstodifequusicou}), it is not clear how to generate $\bar V_p$ s.t. its conditional density satisfies the smoothness conditions.

\paragraph{Application to SDE approximation.} The numerical scheme based on a coupling of order $O(p^{-m/4})$ is computationally equivalent to the Milstein scheme based on Wiktorsson's result \cite{wiktorsson2001JoiChaFunSimSimIteItIntMulIndBroMot} with step size $h^{3/2}$- see the discussion following the proof of Theorem 15 in \cite{davie2014Patappstodifequusicou}. To achieve a genuine improvement one needs a better rate than Theorem \ref{main}, which requires careful estimates for the density $f_p$ of $V_p$ as mentioned above. Moreover, it is also not clear how one can combine Davie's coupling for the double integral and the coupling for the triple integral together to form a genuinedly improved numerical scheme. These questions are left open for future investigation.

\bibliographystyle{acm}
\bibliography{xiling}
\end{document}